\def\eps{\varepsilon}
\def\lam{\lambda}
\def\Lam{\Lambda}
\def\del{\delta}
\def\sig{\sigma}
\def\nab{\nabla}
\def\aa{\mathbf{a}}
\def\ep{\mathbb{E}}
\def\prb{\mathbb{P}}
\def\ll{\underline{L}}
\def\hh{\widehat{\underline{H}}}
\def\sq{\square}
\def\ka{\kappa}
\def\Om{\Omega}
\DeclareMathOperator{\esssup}{ess\,\sup}
\DeclareMathOperator{\essinf}{ess\,\inf}
\theoremstyle{plain}
\newtheorem{thm}{Theorem}[section]
\newtheorem{lem}[thm]{Lemma}
\newtheorem{prop}[thm]{Proposition}
\newtheorem{dfn}[thm]{Definition}
\newtheorem{rem}[thm]{Remark}
\numberwithin{equation}{section}
\begin{document}

\renewcommand{\thefootnote}{\fnsymbol{footnote}}

\title{Quantitative Stochastic Homogenization of Elliptic Equations with Unbounded Coefficients}
\author{Tomohiro Aya\footnote{Department of Mathematics, Graduate School of Science, Kyoto University, Kitashirakawa-Oiwakecho, Sakyo-ku,
Kyoto 606-8502, Japan. E-mail: aya.tomohiro.42z@st.kyoto-u.ac.jp}}
\date{2023/02/02}
\maketitle

\section*{Abstract}
In this paper, we consider stochastic homogenization of elliptic equations with unbounded and non-uniformly elliptic coefficients. Extending subadditive arguments which are introduced in \cite{MR3932093,MR3481355}, we get an estimate for the rate of the convergence of the solution of the Dirichlet problem under the condition that coefficients in the unit cube have a certain exponential integrability. For the coefficient field $\aa$ in this paper, we only assume a constant decrease at a constant distance of the maximal correlation as an assumption of ergodicity, and stationarity with respect to $\mathbb{Z}^d$-translations.

\section{Introduction}
In this paper, we consider stochastic homogenization of the following linear elliptic equation 
\begin{equation}
\begin{dcases}
-\nab \cdot \left( \aa \left( \frac{x}{\eps} \right) \nab u_{\eps} \right)=0 &( \text{in } U) \text{,}  \\
u_{\eps}=f  &( \text{on } \partial U) \text{,} \label{int1}
\end{dcases}
\end{equation}
in a bounded open subset $U \subset \mathbb{R}^d$. The coefficient $\aa(\cdot) \in \mathbb{R}^{d  \times d} $ is a random field valued in the positive definite matrices and whose law under a probability measure $\prb$ has stationarity with respect to $\mathbb{Z}^d$-translations and ergodicity. In the situation where the coefficient $\aa(\cdot)$ is uniformly elliptic, Kozlov \cite{MR542557} and Papanicolaou and Varadhan \cite{MR712714} led to the qualitative stochastic homogenization result that the unique solution $u_{\eps}$ of \eqref{int1} converges, as $\eps \to +0$, to the solution $u$ of a constant coefficient equation
\begin{equation}
\begin{dcases}
-\nab \cdot \left( \bar{\aa} \nab u \right)=0 &( \text{in } U) \text{,}  \\
u=f  &( \text{on } \partial U) \text{.}
\end{dcases}
\end{equation}

Quantitative stochastic homogenization is a field that aims to obtain a rate of the convergence of $u_{\eps}$ to $u$ under the assumption of quantitative mixing conditions in ergodicity. The first quantitative result was obtained by Yurinski\u{i} \cite{MR1117252} by using probabilistic arguments. Quantitative theory of stochastic homogenization for elliptic equations was developed by the work of Gloria and Otto \cite{MR3415388}, Fischer and Neukamm \cite{MR4302762} and Gloria, Neukamm and Otto \cite{MR4377865}, who led to an optimal estimate under a situation that \textit{spectral gap inequality} or \textit{multiscale logarithmic Sobolev inequality} holds as a mixing condition. The core of their argument is to combine the spectral gap inequality with inequalities related to linear elliptic equations, such as the Meyers estimate and the Caccioppoli inequality, to derive a precise estimate for the moment of the gradient of the corrector which is a central object in the stochastic homogenization of linear elliptic equations. Also, a theory of quantitative stochastic homogenization and large-scale regularity under different assumptions such as \textit{finite-range of dependence} as mixing conditions was established by Armstrong and Smart \cite{MR3481355} and Armstrong, Kuusi and Mourrat \cite{MR3545509,MR3932093}. They succeeded in obtaining quantitative results by introducing another subadditive quantity corresponding to the subadditive energy quantity that played a central role in the qualitative theory by Maso and Modica \cite{MR870884,MR850613}. 

We are interested in extending the quantitative theory of stochastic homogenization to the case of unbounded and non-uniformly elliptic coefficients. In discrete setting, such attempts are preempted. For instance, Lamacz, Neukamm and Otto \cite{MR3418538} gave an error estimate of stochastic homogenization, and Andress and Neukamm \cite{MR3949105} gave a Berry-Esseen type estimate on random conductance model on the lattice $\mathbb{Z}^d$ satisfying the asuumptions of a spectral gap estimate and a moment condition. Also, in the continuous setting, Bella and Sch\"{a}ffner \cite{MR4201290} obtained the almost sure $L^{\infty}$-sublinearity of the corrector as a result of qualitative stochastic homogenization. On the other hand, to obtain quantitative stochastic homogenization, it was not known how much integrability of the ellipticity of the coefficient $\aa(\cdot)$ in the unit space of $\mathbb{R}^d$ is needed. Recently, Bella and Kniely \cite{bella2022regularity} generalized results of \cite{MR4377865} on the decay of the corrector gradient, growth of the corrector, and a quantitative two-scale expansion to the unbounded setting where averages of $|\aa|^p$ and $|\aa^{-1}|^q$ in unit ball possess stretched exponential moment bounds. However, they demanded that the set of coefficient fields is invariant under $\mathbb{R}^d$-transitions and a spectral gap condition by technical reasons. 

In this paper, we generalize the quantitative theory of \cite{MR3932093} to unbounded and non-uniformly elliptic case. An advantage of this approach is that estimates can be obtained without the assumptions of the spectral gap and other Poincar\'{e}-type inequalities. In this paper, we assume only  uniformly decreasing of maximal correlation $\rho$ as the mixing condition. Also, for the stationarity assumption, we assume stationarity with respect to $\mathbb{Z}^d$-transitions. We attempt to extend the subadditive argument by introducing the maximum and minimum values of the ellipticity of the coefficients for bounded Lipschitz domains as random variables. The difficulty of extending to the setting where the coefficients only satisfy local boundedness is that as the domain expands, the maximum values of $|\aa|$ and $|\aa^{-1}|$ in the domain increase. We show that the convergence of the subadditive quantities is rapid enough when the maximum values of $|\aa|$ and $|\aa^{-1}|$ in the unit domain have exponential integrability, and we obtain an estimate for the rate of the convergence of the solution of elliptic equations of the Dirichlet problem.

\subsection{Assumptions and Examples} \label{sec:assumption}
Let $\Om$ denote the set of measurable maps $\aa(\cdot)$ from $\mathbb{R}^d$ into the set of positive definite $d \times d$ matrices satisfying the following condition. 
\begin{itemize}
\item (Local boundedness). For every bounded domain $U\subset \mathbb{R}^d$,
\begin{equation}
\sup_{x \in U} \left| \aa(x) \right| + \sup_{x \in U} \left| \aa(x)^{-1} \right| < \infty. \label{as1}
\end{equation}
\end{itemize}
For a Borel subset $U \subseteq \mathbb{R}^d$, $\mathcal{F}_U$ denotes the $\sigma$-algebra generated by the following family of maps,
\begin{equation}
\left\{ \aa \mapsto \int_{\mathbb{R}^d} e_i \cdot \aa(x) e_j \varphi(x) dx : i,j \in \{1,2,\dots,d \},\varphi \in C_c^{\infty}(U) \right\}.
\end{equation}
$\mathcal{F}_{U}$ expresses the information of the coefficients $\aa(\cdot)$ in $U$. We write $\mathcal{F}:=\mathcal{F}_{\mathbb{R}^d}$. For each $y \in \mathbb{R}^d$, we let $T_y:\Om \to \Om$ be the shift operator such that
\begin{equation}
(T_y\aa)(x):=\aa(x+y).
\end{equation}
By using same notation, we let $T_y:\mathcal{F} \to \mathcal{F}$ be the map given by $T_{y}(A):=\{ T_y\aa:\aa \in A\}$. We let $d_{\infty}$ be the $l^{\infty}$-distance between two domains: for $U,V \subset \mathbb{R}^d$,
\begin{equation}
d_{\infty}(U,V):=\inf \left\{ \| u-v\|_{\infty}= \max_{1\leq i \leq d} |u_i-v_i | : u \in U,v \in V \right\}.
\end{equation}

We denote by $\prb$ a probability measure on the measurable space $(\Om, \mathcal{F})$. We write $\ep$ for the expectation with respect to $\prb$. For any $\sigma$-algebra $\mathcal{A} \subset \mathcal{F}$, denote $\mathcal{L}^2 \left( \mathcal{A} \right)$ the space of square-integrable, $\mathcal{A}$-measurable real-valued random variables. We assume that $( \Om, \mathcal{F}, \prb)$ satisfies the following properties corresponding to stationarity and ergodicity.
\begin{itemize}
\item (Stationarity with respect to $\mathbb{Z}^d$-translations). For all $z \in \mathbb{Z}^d$ and $A \in \mathcal{F}$,
\begin{equation}
\prb[A]=\prb[T_zA]. \label{as2}
\end{equation}
\item (Uniformly decreasing of maximal correlation $\rho$). Fix $r \in (0,1)$. For every pair of Borel subsets $U,V \subset \mathbb{R}^d$ with $d_{\infty}(U,V)\geq 1$, 
\begin{equation}
\rho \left( \mathcal{F}_{U}, \mathcal{F}_{V}\right) := \sup_{\substack{f \in \mathcal{L}^2 \left( \mathcal{F}_{U} \right) \\ g\in \mathcal{L}^2 \left( \mathcal{F}_{V} \right)} } \frac{\text{Cov}[f,g]}{\text{Var}[f]^{1/2}\text{Var}[g]^{1/2}} \leq r . \label{as3}
\end{equation}
\end{itemize}
\begin{rem}
\textup{It is not essential that the size of the shift in the stationarity assumption equals to the distance between the two sets in the ergodicity assumption. Indeed, the assumption of uniformly decreasing of maximal correlation $\rho$ can be rewritten as follows.}
\begin{itemize}
\item \textup{Fix $R<\infty$ and $r \in (0,1)$. For every pair of Borel subsets $U,V \subset \mathbb{R}^d$},
\begin{equation}
d_{\infty}(U,V) \geq R \quad \Rightarrow \quad \rho \left( \mathcal{F}_{U}, \mathcal{F}_{V}\right)\leq r . 
\end{equation}
\end{itemize}
\textup{If we replace the condition above by this condition, the constants $C$, $c$ in this paper will depend on $R$.}
\end{rem}
The maximal correlation $\rho(\mathcal{A},\mathcal{B})$ is the distance which measures the degree of dependence between the $\sigma$-algebras $\mathcal{A}$ and $\mathcal{B}$, and it was first studied in the papers \cite{MR3543,hirschfeld1935connection} in statistical contexts. In a random field, it is known that under certain conditions the $\rho$-mixing condition is equivalent to the $\alpha$-mixing condition, which is the weakest strong mixing condition, see \cite{MR1245294,MR2325296} for details. In the first quantitative result \cite{MR1117252}, quantitative estimates were obtained from the assumption of polynomial decay of the maximal correlation. In this paper, it is sufficient if $\rho$ decreases by a certain ratio at a certain distance. 

We introduce the random variables for spatial local boundedness as follows: for a bounded Lipschitz domain $U\subseteq \mathbb{R}^d$,
\begin{equation}
\begin{split}
\Lam(U)&:= \esssup\displaylimits_{x \in U} \sup_{\xi \in \mathbb{R}^d ,|\xi|=1} \xi \cdot \mathbf{a}(x) \xi ,\\
\lam(U)&:= \essinf\displaylimits_{x \in U} \inf_{\xi \in \mathbb{R}^d ,|\xi|=1} \xi \cdot \mathbf{a}(x) \xi .\label{def:lam}%
\end{split}
\end{equation}
$\Lam(U)$ and $\lam(U)$ are quantities representing the maximum and minimum values of the coefficients in the domain $U$. Note that local boundedness implies that, for every bounded domain $U$,
\begin{equation}
0<\lam(U)\leq \Lam(U)< \infty. \label{con:lam}
\end{equation}

An example that satisfies the assumptions of local boundedness \eqref{as1}, stationarity \eqref{as2}, and mixing condition \eqref{as3} is a random checkerboard model. We obtain the model by dividing $\mathbb{R}^d$ into cubes of size $1$ and assigning coefficients to be random variables that are independent and identically distributed on each cubes. Specifically, let $\{ b(z) \}_{z \in \mathbb{Z}^d}$ be independent and identically distributed random variables. We prepare a map $\mathbb{R} \ni r \mapsto \aa_r \in \mathbb{R}^{d \times d}$ from $\mathbb{R}$ to the set of positive definite matrices. Then, we can define a random field $\aa(\cdot): \mathbb{R}^d \to \mathbb{R}^{d \times d}$ satisfying \eqref{as1}, \eqref{as2} and \eqref{as3} by setting, for every $z \in \mathbb{Z}^d$ and $x \in z+\left[ -1/2,1/2 \right)^d$,
\begin{equation}
\aa(x):=\aa_{b(z)}.
\end{equation}
In the random checkerboard model, when at least one of $|\aa_{b(0)|}$ and $|\aa_{b(0)}^{-1}|$ is an unbounded ramdom variable, i.e., for every $C < \infty$,
\begin{equation}
\prb \, \left[\,|\aa_{b(0)}| + |\aa_{b(0)}^{-1}|  > C\, \right] >0,
\end{equation}
the model does not satisfy the uniform ellipticity of the coefficients.

\subsection{Main Results}

The main theorem of this paper is the quantitative stochastic homogenization of linear elliptic equations with unbounded and non-uniformly elliptic coefficients. We consider the situation where the maximum values of the operator norm of the coefficients and its inverse in the unit cube $\sq_0:=(-1/2,1/2)^d$ are unbounded random variables. In the following result, a homogenization rate is obtained when the random variables have exponential integrability. This is an estimate for the error in homogenization of the Dirichlet problem.

\begin{thm}\label{thm:king}
Let the assumptions in Section \ref{sec:assumption} hold. Let $\beta,\gamma \in (3,\infty]$ satisfy $0<\frac{1}{\beta}+\frac{1}{\gamma}<\frac{1}{3}$. Suppose that there exists $M < \infty$ satisfying 
\begin{equation}
\ep \left[ \exp \left( \Lam(\sq_0)^\beta \right) \right] + \ep \left[ \exp \left( \lam(\sq_0)^{-\gamma} \right) \right] \leq M. \label{thm:ass}
\end{equation} 
Then, for a bounded Lipschitz domain $U \subseteq \sq_0$, $\del >0$, $\alpha \in \left(\frac{1}{\beta}+\frac{1}{\gamma},\frac{1}{3}\right)$ and $p \in (0,4)$, there exist a symmetric matrix $\bar{\aa}$ and constants $c=c(d,r,\beta, \gamma ,M ,U,\del ,\alpha,p)>0$, $C=C(d,r,\beta, \gamma ,M ,U,\del ,\alpha,p) < \infty$ such that the following holds: For every $\eps \in (0,1]$, $f \in W^{1,2+\del}(U)$, and the unique solutions $u^\eps,u \in f+H_0^1(U)$ of the Dirichlet problems
\begin{alignat}{2}
-\nab \cdot \aa \left( \frac{\cdot}{\eps} \right) \nab u^\eps &=0 \quad(\text{in } U),  \qquad u^\eps&=f \quad( \text{on } \partial U), \\
-\nab \cdot \bar{\aa}  \nab u &=0 \quad(\text{in } U), \qquad u&=f \quad( \text{on } \partial U),
\end{alignat}
in the distribution sense, we have
\begin{equation}
\ep \left[ \left\|u-u^{\eps} \right\|_{L^2(U)}^p \right]^{\frac{1}{p}} \leq C \left\| \nab f \right\|_{L^{2+\del}(U)} \exp \left( -c\left( -\log \eps \right)^{1-3\alpha} \right). \label{thm:king2}
\end{equation}
\end{thm}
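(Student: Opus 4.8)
The strategy is to run the Armstrong–Kuusi–Mourrat subadditive machinery from \cite{MR3932093}, but with the uniform ellipticity constants $\Lambda,\lambda$ replaced throughout by the random quantities $\Lambda(U),\lambda(U)$ from \eqref{def:lam}. First I would introduce the two dual subadditive energy quantities $\mu$ and $\mu^*$ (the variational quantities $\nu,\nu^*$ in the AKM notation) on triadic cubes $\sq_n=3^n\sq_0$, defined by minimizing the Dirichlet energy $\int_{\sq_n}\tfrac12\nab v\cdot\aa\nab v$ over affine data, respectively the dual energy over flux data. Subadditivity of these on a partition of $\sq_{n+1}$ into $3^d$ copies of $\sq_n$ is exactly as in \cite{MR3932093}; what must be redone is (i) the a priori bounds controlling $\mu(\sq_n,p)$ and $\mu^*(\sq_n,q)$ in terms of $|p|^2,|q|^2$ with random prefactors built from $\Lambda(\sq_n),\lambda(\sq_n)^{-1}$, and (ii) the control of the expectation $\ep[\Lambda(\sq_n)]$ and $\ep[\lambda(\sq_n)^{-1}]$, which, unlike in the uniformly elliptic case, grow with $n$ but only slowly: since $\sq_n$ is a union of $3^{nd}$ unit cubes, a union bound together with the exponential integrability \eqref{thm:ass} gives $\Lambda(\sq_n)\lesssim (nd\log 3)^{1/\beta}$ and $\lambda(\sq_n)^{-1}\lesssim (nd\log 3)^{1/\gamma}$ with stretched-exponentially high probability. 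These polylogarithmic-in-$n$ bounds are the reason one loses only the factor $(-\log\eps)^{1-3\alpha}$ in the exponent rather than getting a clean algebraic rate.

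Next, I would establish the key one-step improvement: the expected additivity defect $\ep[\mu(\sq_n,p)]-\ep[\mu(\sq_{n+1},p)]$ (and dually for $\mu^*$) is controlled, via the variance/covariance structure and the mixing hypothesis \eqref{as3}, by a constant-factor contraction — here the uniformly decreasing maximal correlation $\rho\le r<1$ plays precisely the role that finite-range independence plays in \cite{MR3932093}, giving geometric-type decay of the fluctuations across well-separated subcubes. Combining this with the deterministic monotonicity $\mu(\sq_{n+1},p)\le\mu(\sq_n,p)$ and the two-sided bounds $\tfrac12\lambda(\sq_n)|p|^2\le\mu(\sq_n,p)\le\tfrac12\Lambda(\sq_n)|p|^2$, one obtains that the ``coarsened coefficient'' defined by polarizing $p\mapsto\mu(\sq_n,p)$ converges: its limit is the symmetric matrix $\bar\aa$, and the rate of convergence $\ep[|J(\sq_n)|^s]$ of the master quantity $J(\sq_n,p,q):=\mu(\sq_n,p)+\mu^*(\sq_n,q)-p\cdot q$ decays like $\exp(-c n^{1-3\alpha})$ for the relevant exponents — the loss from $1$ to $1-3\alpha$ coming from balancing the polylog growth of $\Lambda(\sq_n),\lambda(\sq_n)^{-1}$ against the geometric gain, with the three appearing because of the product structure $\tfrac1\beta+\tfrac1\gamma<\tfrac1\alpha<3$ tying the two integrability exponents to the contraction.

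From the decay of $J(\sq_n)$, the passage to the homogenization error for the Dirichlet problem is then relatively standard: one uses the subadditive quantities to build the finite-volume correctors, runs the two-scale expansion of $u^\eps$ around $u$ with the flux corrector, and estimates the $H^{-1}$-norm of the residual by $J$ at the scale $n\sim\log(1/\eps)$; the Meyers-type higher integrability of $\nab u$ (which is why $f\in W^{1,2+\del}$ and the $\|\nab f\|_{L^{2+\del}}$ factor appear) absorbs the non-uniform ellipticity in the boundary-layer term. Substituting $n\asymp -\log\eps$ into $\exp(-cn^{1-3\alpha})$ yields \eqref{thm:king2}, after converting the stretched-exponential bounds on $\Lambda(\sq_n),\lambda(\sq_n)^{-1}$ into the $L^p$-control for $p<4$ via Hölder (the restriction $p<4$ reflecting the quadratic dependence of the energy on the data together with a further factor-of-two loss in the two-scale argument). \textbf{The main obstacle} I anticipate is step (i)–(ii) above: making the subadditive quantities well-defined and getting \emph{quantitative}, high-probability two-sided bounds on them when $\Lambda(\sq_n)$ and $\lambda(\sq_n)^{-1}$ are themselves random and unbounded — in particular ensuring that every inequality in the AKM scheme that previously used $\Lambda/\lambda$ deterministically now carries an explicit, trackable dependence on $\Lambda(\sq_n),\lambda(\sq_n)$ so that the final concentration estimate only costs the polylogarithmic factor and nothing worse. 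The delicate point is the interaction of the Caccioppoli and Meyers estimates with the random ellipticity, since the Meyers exponent itself now depends on $\Lambda(\sq_n)/\lambda(\sq_n)$ and must be kept away from $2$ uniformly with high probability.
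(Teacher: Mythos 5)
Your plan follows the paper's route essentially step for step: the subadditive quantities $\mu,\mu_*,J$ with the random ellipticity $\Lam(\sq_n),\lam(\sq_n)$ tracked explicitly and truncated at polylogarithmic-in-$n$ thresholds obtained from \eqref{thm:ass} by a union bound over unit cubes (the paper's ``suppressive sequences''), the maximal-correlation bound \eqref{as3} replacing finite-range dependence in a variance contraction across well-separated subcubes, an iteration yielding $\ep\left[J(\sq_n,p,\bar{\aa}p)\right]\leq C\exp\left(-cn^{1-3\alpha}\right)$, and then a two-scale expansion with cutoff, multiscale Poincar\'e, Caccioppoli and Meyers estimates at scale $n\asymp-\log_3\eps$, with H\"older's inequality giving the $p<4$ moments — so this is essentially the paper's argument. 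The one misplaced point is your anticipated obstacle about the Meyers exponent depending on $\Lam(\sq_n)/\lam(\sq_n)$: as in the paper, the Meyers estimate is applied only to the constant-coefficient homogenized solution $u$ (with deterministic bounds $a\,\textup{\textsf{Id}}\leq\bar{\aa}\leq A\,\textup{\textsf{Id}}$ from Proposition \ref{propprop:baa}), so its exponent is deterministic, and the random ellipticity enters only through the energy and Caccioppoli estimates, whose explicit $\Lam/\lam$ factors the suppressive truncation absorbs.
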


In this theorem, we assume only uniformly decreasing of maximal correlation $\rho$ as ergodicity. This condition is weaker than finite-range of dependence appeared in \cite{MR3545509,MR3932093,MR3481355}. We obtain quantitative result of unbounded and non-uniformly elliptic cases without multiscale logarithmic Sobolev inequality and spectral gap inequality, which are assumed in Bella and Kniely \cite{bella2022regularity}. Also, we assume only stationarity with respect to $\mathbb{Z}^d$-transitions. That is, it is not necessary that distributions of $\aa(\cdot)$ and $\aa(\cdot+x)$ coincide for every $x \in \mathbb{R}^d$.

The organization of the present paper is as follows. In Section $2.1$, we introduce subadditive quantities which are introduced in \cite{MR870884}, \cite{MR850613} and \cite{MR3481355}, and verify that the basic properties hold in our case. Then we characterize homogenized coefficients from the subadditivity of the quantities and estimate their sizes. Section $2.2$ is the core part of this paper. We obtain quantitative results on the convergence of the coefficients. We extend the previous iteration argument by introducing the concept of controlling the effects of unboundedness of the coefficients. In the first half of Section $3$, we show an estimates that leads from the convergence of the coefficients to the convergence of the solutions of the Dirichlet problem by a pointwise discussion of partial differential equations. Finally, we prove Theorem \ref{thm:king} in Section $3.3$.

\subsection{Notation}
We introduce some notations which are used throughout the paper. The set of nonnegative integers is denoted by $\mathbb{N}:=\{0,1,2,\dots\}$. The number of spatial dimensions is denoted by $d$. The canonical basis of $\mathbb{R}^d$ is written as $\{ e_1,e_2,\dots,e_d \}$, and the open ball of radius $r>0$ centered at $x \in \mathbb{R}^d$ is $B_r(x):=\left\{y\in \mathbb{R}^d: |y-x|<r \right\}$. For abbreviation, we write $B_1:=B_1(0)$. For $r>0$ and $U\subseteq \mathbb{R}^d$, we define $U_r:=\left\{ x \in U: \text{dist}(x,\partial U)>r \right\}$. For each $n \in \mathbb{N}$, we denote triadic cubes by
\begin{equation}
\sq_n:=\left( -\frac{1}{2}\cdot3^n,\frac{1}{2}\cdot3^n \right)^d.
\end{equation}
For $k \in \mathbb{N}$ and $f:U\to \mathbb{R}^d$, $\nab^k f$ is the tensor of $k$-times partial derivatives of $f$:
\begin{equation}
\nab^k f := \left( \partial_{x_{i_1}}\dots \partial_{x_{i_k}} f\right)_{i_1,i_2,\dots,i_k \in \{1,2,\dots,d\}}.
\end{equation}
For a measurable set $E\subseteq \mathbb{N}^d$, we denote the Lebesgue measure of $E$ by $|E|$. For simplicity of notation, we denote the integral of an integrable function $f:U\to \mathbb{R}$ by
\begin{equation}
\int_{U}f:=\int_{U}f(x)dx.
\end{equation}
When $0<|U|<\infty$ and $f \in L^1(U)$, we write
\begin{equation}
\fint_{U}f:=\frac{1}{|U|}\int_{U}f.
\end{equation} 
We often use the normalized $L^p$ norm: for every $p \in [1,\infty)$, $0<|U|<\infty$ and $f \in L^{p}(U)$, we set
\begin{equation}
\left\| f \right\|_{\ll^p(U)}:=\left( \fint_{U} \left| f\right|^p\right)^{\frac{1}{p}}=\left| U \right|^{-\frac{1}{p}} \left\| f \right\|_{L^p(U)}.
\end{equation}
We write $F\in L^p(U)$ if $F:U \to \mathbb{R}^m$ is a vector field such that $|F|\in L^p(U)$ and define 
\begin{equation}
\left\| F \right\|_{L^p(U)}:=\left\| \left| F\right| \right\|_{L^p(U)}\quad \text{and}\quad \left\| F \right\|_{\ll^p(U)}:=\left\| \left| F\right| \right\|_{\ll^p(U)}.
\end{equation}
For $p \in \mathbb{R}^d$, we denote the affine function with slope $p$ through the origin by $l_p(x):=p\cdot x$. We denote by $H^1(U)=W^{1,2}(U)$ the Sobolev space and denote by $H_0^1(U)$ the closure of $C_c^{\infty}(U)$ in $H^1(U)$. We define the normalized $H^1(U)$ norm of $u\in H^1(U)$  by
\begin{equation}
\| u\|_{ \underline{H}^1(U)}:= |U|^{-\frac{1}{d}}\|u\|_{\ll^2(U)}+\|\nab u \|_{\ll^2(U)}.
\end{equation}
Note that, for every $a>0$,
\begin{equation}
\left\| u\left( \frac{\cdot}{a}\right) \right\|_{ \underline{H}^1(aU)}=\frac{1}{a} \| u\|_{ \underline{H}^1(U)}.
\end{equation}
For every distribution $u$, the negative-order Sobolev norm is defined by
\begin{equation}
\left\| u \right\|_{H^{-1}(U)}:=\sup \left\{ \int_{U}uv : v \in H_0^1(U),\|v\|_{H^1(U)}\leq1\right\}.
\end{equation}
In this paper, we treat two different normalized $H^{-1}(U)$ norm. These are defined as follows:
\begin{align}
\left\| u \right\|_{\underline{H}^{-1}(U)}&:=\sup \left\{ \fint_{U}uv : v \in H_0^1(U),\|v\|_{\underline{H}^1(U)}\leq1\right\}, \\
\left\| u \right\|_{\hh^{-1}(U)}&:=\sup \left\{ \fint_{U}uv : v \in H^1(U),\|v\|_{\underline{H}^1(U)}\leq1\right\}.
\end{align}
Note that we denote by $\int_{U}uv$ the duality pairing between $u$ and $v$, and we understand it as $\fint_{U}uv=|U|^{-1}\int_{U}uv$ in the above definitions. Observe that
\begin{equation}
\left\| u\left( \frac{\cdot}{a}\right) \right\|_{ \underline{H}^{-1}(aU)}=a \| u\|_{ \underline{H}^{-1}(U)} \quad \text{and} \quad \left\| u\left( \frac{\cdot}{a}\right) \right\|_{ \hh^{-1}(aU)}=a \| u\|_{ \hh^{-1}(U)}.
\end{equation}
$H_{\text{loc}}^1(U)$ denotes the set of functions on $U$ which belong to $H^1(V)$ whenever $V$ is bounded and $\overline{V}\subset U$. We denote the set of weak solutions of the equation
\begin{equation}
-\nab \cdot \left( \aa \nab u\right)=0 \quad \text{in}\; U
\end{equation}
by $\mathcal{A}(U)\subseteq H_{\text{loc}}^1(U)$. Note that $\mathcal{A}(U)$ is a set of weak solutions to $-\nab \cdot \left( \aa \nab u\right)=0$ in the distribution sense. It is equivalent to the condition
\begin{equation}
\int_{U} \nab \phi \cdot \aa \nab u=0 \quad \text{for every }\phi \in H_0^1(U).
\end{equation}
We extend the notations $\text{Var}[X]$ and $\text{Cov}[X,Y]$ to ramdom vectors $X,Y:\Om \to \mathbb{R}^d$ by setting 
\begin{equation}
\text{Var}[X]:=\ep \left[ \left| X-\ep[X]\right|^2\right] \quad\text{and} \quad \text{Cov}[X,Y]:= \ep \left[ \left( X-\ep[X]\right)\cdot\left( Y-\ep[Y]\right)\right].
\end{equation}
The identity matrix is denoted by \textup{\textsf{Id}}. For $A \in \mathbb{R}^{d \times d}$, let $|A|$ be the operator norm of $A$, defined by
\begin{equation}
|A|:=\sup_{\xi \in \mathbb{R}^d ,|\xi|=1} |A\xi|.
\end{equation}
We define a partial ordered relation $\leq$ in a set of positive definite $d \times d$ matrices as follows:
\begin{equation}
A\leq B \quad \Leftrightarrow \quad p\cdot(B-A)p \geq 0 \; \text{ for every }p\in\mathbb{R}^d.
\end{equation}
Note that the ordered relation has the following property for inverses: let $A,B \in \mathbb{R}^{d \times d}$ be positive definite, then 
\begin{equation}
A^{-1}\leq B^{-1} \quad \Leftrightarrow \quad A\geq B.
\end{equation}
For the proof we refer the reader to \cite[Theorem 2.2.5.]{murphy1990c}. Throughout the paper, $c$ and $C$ denote positive constants which may vary without notice from line to line and also between multiple expressions in the same line.

\section{Convergence of Subadditive Quantities}

\subsection{The subadditive Quantities $\mu$, $\mu_*$ and $J$}
We recall subadditive quantities from \cite{MR3932093} which play key roles in this paper. For a bounded Lipschitz domain $U\subset \mathbb{R}^d$ and $p \in \mathbb{R}^d$, we denote $l_p(x):=p\cdot x$ and define
\begin{equation}
\mu(U,p):=\inf_{v\in l_p+H_0^1(U)} \fint_{U} \frac{1}{2} \nab v \cdot \aa \nab v =\inf_{w\in H_0^1(U)} \fint_{U} \frac{1}{2}(p+\nab w)\cdot \aa(p+\nab w). \label{def:mu}
\end{equation}
We denote by $v(\cdot,U,p)$ the unique $v\in l_p+H_0^1(U)$ minimizing $\fint_{U} \frac{1}{2} \nab v \cdot \aa \nab v$.
The existence and uniqueness of $v(\cdot,U,p)$ or maximizer of \eqref{def:mu*} below derives from \eqref{con:lam}. We mention that $v(\cdot,U,p)$ is characterized as the unique weak solution of the Dirichlet problem
\begin{equation}
-\nab \cdot \aa  \nab v =0 \quad(\text{in } U), \qquad v=l_p \quad( \text{on } \partial U),  \label{def:weak}
\end{equation}
in the distribution sense. In other words, $v(\cdot,U,p) \in l_p+H_0^1(U)$ satisfies, for every $w \in H_0^1(U)$,
\begin{equation}
\fint_{U} \nab w \cdot \aa \nab v(\cdot,U,p)=0.
\end{equation}
For a bounded Lipschitz domain $U\subseteq \mathbb{R}^d$ and $p \in \mathbb{R}^d$, we also define the dual subadditive quantity $\mu_*(U,q)$ by
\begin{equation}
\mu_*(U,q):=\sup_{u \in \mathcal{A}(U)} \left( \fint_{U} -\frac{1}{2} \nab u \cdot \aa \nab u+q\cdot \nab u \right). \label{def:mu*}%
\end{equation}
Recall that $\mathcal{A}(U)$ is linear space of weak solutions to $-\nab\cdot\left( \aa \nab u\right)=0$. We define $J(U,p,q)$ by
\begin{equation}
J(U,p,q):=\mu(U,p)+\mu_*(U,q)-p\cdot q .\label{def:J}%
\end{equation}
Our goal is to obtain convergence of $J(\sq_n,p,\bar{\aa}p)$ to 0. First, we refer to another characterization of $J$ in the following lemma.


\begin{lem} \label{lemlem:J}
For every bounded Lipschitz domain $U \in \mathbb{R}^d$ and $p,q \in \mathbb{R}^d$,
\begin{equation}
J(U,p,q)=\sup_{w \in \mathcal{A}(U)} \fint_{U} \left( -\frac{1}{2}\nab w \cdot \aa \nab w-p \cdot \aa \nab w +q\cdot \nab w \right) . \label{lem:J}%
\end{equation}
Also, the maximizer $v(\cdot,U,p,q)$ of \eqref{lem:J} is the difference of the maximizer of $\mu_*(U,q)$ in \eqref{def:mu*} and $v(\cdot,U,p)$.
\end{lem}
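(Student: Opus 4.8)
The plan is to deduce \eqref{lem:J} directly from the definition \eqref{def:J} of $J$ by changing variables in the supremum defining $\mu_*(U,q)$. Write $v_p:=v(\cdot,U,p)$ for the minimizer in \eqref{def:mu}. Because $v_p$ solves \eqref{def:weak} it belongs to $\mathcal{A}(U)$, and since $\mathcal{A}(U)$ is a linear subspace, the translation $w\mapsto w+v_p$ is a bijection of $\mathcal{A}(U)$ onto itself; hence in \eqref{def:mu*} the supremum may equally be taken over $u=w+v_p$ with $w\in\mathcal{A}(U)$.

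Before carrying out the substitution I would record three elementary identities. First, $v_p-l_p\in H_0^1(U)$, so $\fint_U\nab(v_p-l_p)=0$ by the divergence theorem, and therefore $\fint_U\nab v_p=p$. Second, testing the weak formulation of $-\nab\cdot\aa\nab w=0$ against $v_p-l_p\in H_0^1(U)$ gives $\fint_U\nab v_p\cdot\aa\nab w=\fint_U p\cdot\aa\nab w$ for every $w\in\mathcal{A}(U)$. Third, $\fint_U\nab v_p\cdot\aa\nab v_p=2\mu(U,p)$ by definition of $\mu$.

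Inserting $u=w+v_p$ into \eqref{def:mu*}, expanding $\nab(w+v_p)\cdot\aa\nab(w+v_p)$ with the symmetry of $\aa$, and simplifying the cross term and the linear term by means of these three identities, I expect to arrive at
\[
\mu_*(U,q)=\sup_{w\in\mathcal{A}(U)}\fint_U\left(-\frac{1}{2}\nab w\cdot\aa\nab w-p\cdot\aa\nab w+q\cdot\nab w\right)-\mu(U,p)+p\cdot q.
\]
Rearranging and invoking \eqref{def:J} then gives \eqref{lem:J}. The assertion about maximizers follows at once from the bijection above: if $u^*$ denotes the (unique, by \eqref{con:lam}) maximizer in \eqref{def:mu*}, then $u^*-v_p$ maximizes the right-hand side of \eqref{lem:J}, i.e. $v(\cdot,U,p,q)=u^*(\cdot,U,q)-v(\cdot,U,p)$.

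All of this is routine, so no step is a serious obstacle; the only two points that deserve a moment's care are the use of the symmetry of $\aa$ in expanding the quadratic form, and the justification of $\fint_U\nab v_p=p$ and $\fint_U\nab v_p\cdot\aa\nab w=\fint_U p\cdot\aa\nab w$, both of which rest on $v_p-l_p\in H_0^1(U)$ together with the weak formulations of the equations satisfied by $v_p$ and by $w$.
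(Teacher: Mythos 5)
Your argument is correct: the substitution $u=w+v(\cdot,U,p)$ in the supremum defining $\mu_*(U,q)$, combined with the identities $\fint_U\nab v(\cdot,U,p)=p$, $\fint_U\nab v(\cdot,U,p)\cdot\aa\nab w=\fint_U p\cdot\aa\nab w$ (from testing the equation for $w\in\mathcal{A}(U)$ with $v(\cdot,U,p)-l_p\in H_0^1(U)$) and $\fint_U\frac{1}{2}\nab v(\cdot,U,p)\cdot\aa\nab v(\cdot,U,p)=\mu(U,p)$, yields \eqref{lem:J} and the identification of the maximizer exactly as claimed. The paper itself gives no proof but cites \cite[Lemma 2.1]{MR3932093}, whose argument is essentially this same translation/expansion computation (using the symmetry of $\aa$, as you note), so your proposal matches the intended proof.
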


\begin{proof}
See \cite[Lemma 2.1]{MR3932093}.
\end{proof}

It is easily seen that $v(\cdot,U,p,q)$ is unique up to additive constants. For convenience, we choose this additive constant so that $\fint_{U}v(\cdot,U,0,q)=0$ and $v(\cdot,U,p,0)=-v(\cdot,U,p)$.

By testing the right-hand side of \eqref{lem:J} with 0, we see that for every $p,q\in \mathbb{R}^d$,
\begin{equation}
\mu(U,p)+\mu_*(U,q)-p \cdot q=J(U,p,q) \geq 0. \label{ieJ} %
\end{equation}


The following lemma is some basic properties of $J(U,p,q)$ and $v(\cdot,U,p,q)$.

\begin{lem}[Properties of $J$] Fix a bounded Lipscitz domain $U \subseteq \mathbb{R}^d$. $J(U,p,q)$ and $v(\cdot,U,p,q)$ satisfy the following properties:

\begin{itemize}
\item \textup{(Representation as quadratic form)} The mapping $(p,q)\mapsto J(U,p,q)$ is a quadratic form and there exist symmetric matrices $\aa(U)$ and $\aa_*(U)$ such that
\begin{equation}
\lam(U)\textup{\textsf{Id}} \leq \aa_*(U) \leq \aa(U) \leq \Lam(U)\textup{\textsf{Id}} \label{lem:rq1} %
\end{equation}
and
\begin{equation}
J(U,p,q)=\frac{1}{2}p\cdot \aa(U)p+\frac{1}{2}q\cdot \aa_*^{-1}(U)q -p \cdot q \label{lem:rq2}%
\end{equation}
Moreover, $\aa(U)$ and $\aa_*(U)$ are characterized by the following relations, for every $p,q\in\mathbb{R}^d$:
\begin{align}
\aa(U)p=-&\fint_{U}\aa \nab v(\cdot,U,p,0), \label{lem:rq3} \\ %
\aa_*^{-1}(U)q=&\fint_{U} \nab v(\cdot,U,0,q). \label{lem:rq4} %
\end{align}
\item \textup{(Subadditivity)} Let $U_1,U_2,\dots,U_N\subseteq U$ be bounded Lipschitz domains that are a partition of $U$, in the sense that 
\begin{equation}
U_i \cap U_j=\emptyset\;\; \text{if $i \neq j$,} \quad \text{and} \quad \left| U\setminus \bigcup_{i=1}^{N} U_i \right|=0. \label{lem:sb1} %
\end{equation} 
Then, for every $p,q\in \mathbb{R}^d$,
\begin{equation}
J(U,p,q) \leq \sum_{i=1}^{N} \frac{\left|U_i\right|}{\left|U\right|}J(U_i,p,q). \label{lem:sb2}%
\end{equation} 
\item \textup{(First variation for $J$)} For $p,q \in \mathbb{R}^d$, $v(\cdot,U,p,q)$ is characterized as the unique function in $\mathcal{A}(U)$ which satisfies, for every $w \in \mathcal{A}(U)$,
\begin{equation}
\fint_{U} \nab w \cdot \aa \nab v(\cdot,U,p,q)=\fint_{U} \left( -p \cdot \aa \nab w +q \cdot \nab w \right). \label{lem:fv} %
\end{equation}
\item \textup{(Quadratic response)} If $U_1,U_2,\dots,U_N\subseteq U$ be bounded Lipschitz domains that is a partition of $U$ in the sense of \eqref{lem:sb1}, then
\begin{align}
&\sum_{i=1}^{N} \frac{\left|U_i\right|}{\left|U\right|}\fint_{U_i}\frac{1}{2}\left(\nab v (\cdot,U,p,q)-\nab v (\cdot,U_i,p,q) \right) \cdot \aa \left(\nab v (\cdot,U,p,q)-\nab v (\cdot,U_i,p,q) \right) \\
&=  \sum_{i=1}^{N} \frac{\left|U_i\right|}{\left|U\right|} \left(J(U_i,p,q)-J(U,p,q)\right). \label{lem:qr2} %
\end{align}
\end{itemize}
\end{lem}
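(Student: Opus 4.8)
The plan is to establish each of the four bulleted properties of $J(U,p,q)$ and $v(\cdot,U,p,q)$ essentially as in \cite[Section 2]{MR3932093}, checking that nothing beyond \eqref{con:lam} is used (in particular no uniform ellipticity). I would begin with the \emph{representation as a quadratic form}. Since $\mu(U,p)$ is an infimum of quantities quadratic in $p$ (parametrized by $w\in H_0^1(U)$) and $\mu_*(U,q)$ is a supremum of quantities quadratic in $q$ (parametrized by $u\in\mathcal{A}(U)$), and in both cases the affine part in the parameter is linear in $p$, resp.\ $q$, standard convex/concave duality shows $p\mapsto\mu(U,p)$ and $q\mapsto\mu_*(U,q)$ are quadratic forms; hence so is $(p,q)\mapsto J$ by \eqref{def:J}. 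Writing $\mu(U,p)=\tfrac12 p\cdot\aa(U)p$ and $\mu_*(U,q)=\tfrac12 q\cdot\aa_*^{-1}(U)q$ defines symmetric $\aa(U),\aa_*(U)$ (positivity/finiteness from \eqref{con:lam}). The bounds \eqref{lem:rq1}: for the upper bound on $\aa(U)$, test $\mu(U,p)$ with $w=0$ to get $\mu(U,p)\le\tfrac12 p\cdot\big(\fint_U\aa\big)p\le\tfrac12\Lam(U)|p|^2$; for the lower bound on $\aa_*(U)$, test $\mu_*(U,q)$ with the corrector $v(\cdot,U,q)-l_q\in\mathcal{A}(U)$ appropriately, or dualize. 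The middle inequality $\aa_*(U)\le\aa(U)$ follows from \eqref{ieJ}: nonnegativity of the quadratic form $J$ together with \eqref{lem:rq2} forces $\aa(U)\ge\aa_*(U)$ after optimizing over $p,q$. The characterizations \eqref{lem:rq3}--\eqref{lem:rq4} come from differentiating $J$ in $p$ and $q$ and using Lemma \ref{lemlem:J} to identify the maximizer, together with the first variation identity \eqref{lem:fv} which I prove next.

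Next I would prove the \emph{first variation} \eqref{lem:fv}. The functional $w\mapsto\fint_U\big(-\tfrac12\nab w\cdot\aa\nab w-p\cdot\aa\nab w+q\cdot\nab w\big)$ on the linear space $\mathcal{A}(U)$ is strictly concave (by $\lam(U)>0$ and boundedness of $\aa$ on $U$ from \eqref{con:lam}, the quadratic term is coercive on $\mathcal A(U)$ modulo constants), so it has a unique maximizer modulo additive constants, and Lemma \ref{lemlem:J} identifies this maximizer with $v(\cdot,U,p,q)$. Setting the Gâteaux derivative in a direction $w\in\mathcal{A}(U)$ to zero gives exactly \eqref{lem:fv}. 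The characterizations \eqref{lem:rq3}--\eqref{lem:rq4} then follow by taking $q=0$ resp.\ $p=0$ in \eqref{lem:fv}, testing against suitable $w$, and using the normalizations chosen after Lemma \ref{lemlem:J}; e.g.\ for \eqref{lem:rq4}, $\aa_*^{-1}(U)q = \partial_q J(U,0,q) = \fint_U\nab v(\cdot,U,0,q)$ by direct differentiation of the quadratic form.

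For \emph{subadditivity} \eqref{lem:sb2}: given the partition $\{U_i\}$, restrict the maximizer $v(\cdot,U,p,q)$ of \eqref{lem:J} to each $U_i$. Since $v(\cdot,U,p,q)\in\mathcal{A}(U)\subseteq\mathcal A(U_i)$, it is an admissible competitor for $J(U_i,p,q)$, so
\[
J(U_i,p,q)\ \ge\ \fint_{U_i}\Big(-\tfrac12\nab v(\cdot,U,p,q)\cdot\aa\nab v(\cdot,U,p,q)-p\cdot\aa\nab v(\cdot,U,p,q)+q\cdot\nab v(\cdot,U,p,q)\Big).
\]
Multiplying by $|U_i|/|U|$ and summing over $i$, the right-hand side reassembles (since $\big|U\setminus\bigcup U_i\big|=0$) to exactly $J(U,p,q)$ by \eqref{lem:J}, giving \eqref{lem:sb2}. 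Finally, \emph{quadratic response} \eqref{lem:qr2} is the quantitative version of the previous inequality. Writing $v=v(\cdot,U,p,q)$ and $v_i=v(\cdot,U_i,p,q)$, I expand $\fint_{U_i}\tfrac12(\nab v-\nab v_i)\cdot\aa(\nab v-\nab v_i)$; the cross term $\fint_{U_i}\nab(v-v_i)\cdot\aa\nab v_i$ is handled by the first variation \eqref{lem:fv} on $U_i$ applied with test function $w=v-v_i$ (valid since $v-v_i\in\mathcal A(U_i)$ modulo the boundary data, more precisely $v\in\mathcal A(U_i)$ and $v_i\in\mathcal A(U_i)$), which converts it into linear-in-$(p,q)$ terms that combine with the remaining quadratic pieces to reproduce $J(U_i,p,q)-J(U,p,q)$ after weighting by $|U_i|/|U|$ and summing. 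This is the algebraic heart of the argument and the step most prone to sign/bookkeeping errors, but it is the same identity as in \cite[Lemma 2.2]{MR3932093} and uses only the finiteness and positivity in \eqref{con:lam}, not uniform ellipticity; it is the main obstacle only in the sense of being the most computational. I would therefore present it carefully and refer to \cite{MR3932093} for the completely analogous manipulations. \qed
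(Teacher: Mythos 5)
Your overall route coincides with the paper's: the paper proves the quadratic-form representation directly and handles subadditivity, the first variation and quadratic response by the same computations you sketch (it simply refers to \cite[Lemmas 2.2 and 2.12]{MR3932093} for them, noting that only \eqref{con:lam} is needed); those three sketches of yours are correct. The problems are in your treatment of the representation as a quadratic form. The step you propose for the leftmost inequality in \eqref{lem:rq1}, namely $\lam(U)\textup{\textsf{Id}}\leq \aa_*(U)$, would fail as described: this inequality is equivalent to the upper bound $\mu_*(U,q)\leq \frac{1}{2\lam(U)}|q|^2$, i.e.\ a bound on the supremum in \eqref{def:mu*} over \emph{all} competitors, whereas testing that supremum with one particular function (your ``corrector'') can only bound $\mu_*(U,q)$ from below, which is the wrong direction; moreover $v(\cdot,U,q)-l_q$ lies in $H_0^1(U)$, not in $\mathcal{A}(U)$, so it is not even an admissible competitor, and ``dualizing'' against $\mu$ via \eqref{ieJ} again only produces lower bounds on $\mu_*$. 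The repair is easy: either argue pointwise with Young's inequality, $-\frac12\nab u\cdot \aa\nab u+q\cdot\nab u\leq \frac{1}{2\lam(U)}|q|^2$ a.e.\ for every $u\in\mathcal{A}(U)$, or follow the paper, which combines \eqref{lem:rq4}, Young's inequality and \eqref{lem:Jq} to get $q\cdot\aa_*^{-1}(U)q\leq \fint_U q\cdot\aa^{-1}q\leq \lam(U)^{-1}|q|^2$. Related to this, your parenthetical ``positivity/finiteness from \eqref{con:lam}'' hides a real step: to use $J\geq 0$ to get $\aa_*(U)\leq\aa(U)$ one must first know that $\hat{\aa}_*(U)$ (equivalently $\aa_*^{-1}(U)$) and $\aa^{-1}(U)$ exist; the paper secures this by proving $\lam(U)\textup{\textsf{Id}}\leq\aa(U)$ via Jensen's inequality and then substituting $p=\aa^{-1}(U)q$ into $J(U,p,q)\geq 0$.

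A second, smaller gap: ``an infimum of quantities quadratic in $p$'' is not automatically quadratic in $p$, so your appeal to ``standard convex/concave duality'' does not by itself justify that $p\mapsto\mu(U,p)$ and $q\mapsto\mu_*(U,q)$ are quadratic forms. What is actually needed is the linearity of the solution maps, $(p,q)\mapsto v(\cdot,U,p,q)$, which the paper derives from the first variation \eqref{lem:fv}, combined with the identity \eqref{lem:Jq} and $\mu(U,0)=\mu_*(U,0)=0$; since you prove \eqref{lem:fv} anyway, you should either prove it first and deduce quadraticity from it as the paper does, or spell out the linearity of the minimizer in \eqref{def:mu}. With these two repairs your proposal reproduces the paper's proof.
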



\begin{proof}
From \eqref{def:lam} and \eqref{con:lam}, applying similar arguments to the proof of \cite[Lemmas 2.2 and 2.12]{MR3932093}, we obtain \eqref{lem:sb2}, \eqref{lem:fv} and \eqref{lem:qr2}. To obtain the existence of $\aa_*(U)$ and the bounds of $\aa(U)$ and $\aa_*(U)$, we will prove representation as quadratic form. By \eqref{lem:fv}, we have that
\begin{equation}
(p,q) \mapsto v(\cdot,U,p,q) \text{ is linear.} \label{vli} %
\end{equation}
We observe that we have the following identity:
\begin{equation}
J(U,p,q)=\fint_{U} \frac{1}{2} \nab v(\cdot,U,p,q) \cdot \aa \nab v(\cdot,U,p,q). \label{lem:Jq} %
\end{equation}
Indeed, this is immediate from \eqref{lem:J} and \eqref{lem:fv}. This together with \eqref{vli} implies 
\begin{equation}
(p,q) \mapsto J(U,p,q) \text{ is quadratic.} \label{lem:Jqd} %
\end{equation}
From \eqref{def:mu} and \eqref{def:mu*}, it follows that $\mu(U,0)=0$ and $\mu_*(U,0)=0$. In view of the formula \eqref{def:J} and \eqref{lem:Jq}, this implies in particular that 
\begin{equation}
p \mapsto \mu(U,p) \text{ and } q \mapsto \mu_*(U,q) \text{ are quadratic.} \label{lem:muqd} %
\end{equation}
We define $\aa(U),\hat{\aa}_*(U)$ to be the symmetric matrices such that for every $p,q\in \mathbb{R}^d$,
\begin{align}
\mu(U,p)&=\frac{1}{2}p\cdot \aa(U)p, \label{def:mu2}\\  %
\mu_*(U,q)&=\frac{1}{2}q \cdot \hat{\aa}_*(U)q. \label{def:mu*2}  %
\end{align} 
From \eqref{lem:Jq} and \eqref{def:J}, we have
\begin{align}
p \cdot \aa(U) p &=\fint_{U} \nab v(\cdot,U,p,0) \cdot \aa \nab v(\cdot,U,p,0), \\
q \cdot \hat{\aa}_*(U)q &=\fint_{U} \nab v(\cdot,U,0,q) \cdot \aa \nab v(\cdot,U,0,q).
\end{align}
This and \eqref{vli} imply that, for every $p,q \in \mathbb{R}^d$,
\begin{align}
q \cdot \aa(U) p &=\fint_{U} \nab v(\cdot,U,p,0) \cdot \aa \nab v(\cdot,U,q,0), \\
p \cdot \hat{\aa}_*(U)q&=\fint_{U} \nab v(\cdot,U,0,q) \cdot \aa \nab v(\cdot,U,0,p).
\end{align}
By \eqref{lem:fv}, we have
\begin{align}
q \cdot \aa(U) p &= -\fint_{U} q \cdot \aa \nab v(\cdot,U,p,0), \\
p \cdot \hat{\aa}_*(U)q&=\fint_{U} p \cdot \nab v (\cdot,U,0,q). 
\end{align}
These imply that
\begin{align}
\aa(U)p&=-\fint_{U} \nab v(\cdot,U,p,0)  \label{pr:aaup} \\ 
\hat{\aa}_*(U)q&=\fint_{U} \nab v(\cdot,U,0,q). \label{pr:hatq}
\end{align}
We prove the bounds of $\aa(U)$ and $\hat{\aa}_*(U)$, and the existence of the inverse matrix of $\hat{\aa}_*(U)$. The upper bound of $\aa(U)$ is immediate from testing the definition of $\mu(U,p)$ with $l_p$:
\begin{equation}
p \cdot \aa(U)p=2\mu(U,p)\leq \fint_{U} \nab l_p \cdot \aa \nab l_p =\fint_{U} p\cdot \aa p \leq \Lam(U)|p|^2. \label{mub} %
\end{equation}
This implies that $\aa(U) \leq \Lam(U)\textup{\textsf{Id}}$. The lower bound of $\aa(U)$ comes from Jensen's inequality: for every $w \in H_0^1(U)$,
\begin{equation}
\fint_{U}\frac{1}{2}(p +\nab w)\cdot \aa (p+\nab w) \geq \fint_{U} \frac{\lam(U)}{2}|p+\nab w|^2 \geq \frac{\lam(U)}{2}\left| p+\fint_{U}\nab w \right|^2 =\frac{\lam(U)}{2}|p|^2.
\end{equation}
Taking the infimum over $w\in H_0^1(U)$ and \eqref{def:mu} yields the lower bound $\lam(U)\textup{\textsf{Id}} \leq \aa(U)$. By the bound of $\aa(U)$, we have the existence of $\aa^{-1}(U)$ which is positive definite. From \eqref{ieJ}, \eqref{def:mu2} and \eqref{def:mu*2}, we have
\begin{equation}
\frac{1}{2}p \cdot \aa(U)p+ \frac{1}{2} q \cdot \hat{\aa}_*(U)q \geq p\cdot q
\end{equation}
By substituting $p=\aa^{-1}(U)q$ in this inequality, it follows that $\aa^{-1}(U) \leq \hat{\aa}_*(U)$ and that $\hat{\aa}_*(U)$ is also positive definite. We define $\aa_*(U)$ by the positive definite matrix such that
\begin{equation}
\aa_*(U):=\hat{\aa}_*^{-1}(U). \label{def:mu*3}
\end{equation}
From \eqref{def:J}, \eqref{pr:aaup}, \eqref{pr:hatq}, and \eqref{def:mu*3}, we have \eqref{lem:rq2}, \eqref{lem:rq3} and \eqref{lem:rq4}. Since $\aa^{-1}(U)$ and $\aa^{-1}_*(U)$ are positive, the inequality $\aa^{-1}(U) \leq \aa^{-1}_*(U)$ leads to the bound $\aa_*(U) \leq \aa(U)$. To obtain the lower bound for $\aa_*(U)$, we first use \eqref{lem:rq4} to write
\begin{equation}
\frac{1}{2}q \cdot \aa_*^{-1}(U)q= \frac{1}{2} \fint_{U} q \cdot \nab v(\cdot,U,0,q).
\end{equation}
By Young's inequality,
\begin{equation}
\fint_{U}q \cdot \nab v (\cdot,U,0,q) \leq \fint_{U} \left(\frac{1}{2}q \cdot \aa^{-1}q+ \frac{1}{2} \nab v(\cdot,U,0,q) \cdot \aa \nab v(\cdot,U,0,q) \right),
\end{equation}
and thus, by \eqref{lem:rq2} and \eqref{lem:Jq},
\begin{equation}
q \cdot \aa_*^{-1}(U)q \leq \fint_{U} q \cdot \aa^{-1}q \leq \frac{1}{\lam(U)}|q|^2,
\end{equation}
which gives the desired lower bound $\lam(U)\textup{\textsf{Id}} \leq \aa_*(U)$. This completes the proof of the lemma.
\end{proof}

From \eqref{def:mu*2} and \eqref{def:mu*3}, we have 
\begin{equation}
\mu_*(U,q)= \frac{1}{2}q \cdot \aa_*^{-1}(U)q. \label{def:mu*4}
\end{equation}
We also note that \eqref{lem:rq1}, \eqref{def:mu2} and \eqref{def:mu*4} lead to
\begin{align}
\frac{1}{2}\lam(U)|p|^2 &\leq \mu(U,p) \leq \frac{1}{2}\Lam(U)|p|^2, \label{ieq:mu} \\
\frac{1}{2\Lam(U)} |q|^2 &\leq \mu_*(U,q) \leq \frac{1}{2\lam(U)} |q|^2. \label{ieq:mu*}
\end{align}
From this inequality and \eqref{def:J}, we obtain
\begin{equation}
J(U,p,q) \leq \frac{1}{2}\Lam(U)|p|^2 + \frac{1}{2\lam(U)} |q|^2 + |p||q|. \label{ieq:Jpq}
\end{equation}
This inequality and \eqref{lem:Jq} imply that
\begin{align}
\fint_{U} \left| \nab v(\cdot,U,p,q) \right|^2 &\leq \frac{1}{\lam(U)} \fint_{U} \nab v(\cdot,U,p,q) \cdot \aa \nab v(\cdot,U,p,q) \\
&=\frac{2}{\lam(U)} J(U,p,q) \\
&\leq \frac{\Lam(U)}{\lam(U)}|p|^2 + \frac{1}{\lam(U)^2} |q|^2 + \frac{1}{\lam(U)} |p||q|. \label{ieq:v2}
\end{align} 

The following lemma is the basis on which estimating quantities about $J$ leads to the convergence of $\aa(\sq_n)$ to $\bar{\aa}$.

\begin{lem}\label{lemlem:|J}
There exists a constant $C< \infty$ such that, for every symmetric matrix $\tilde{\aa} \in \mathbb{R}^{d\times d}$ and every bounded Lipschitz domain $U \subseteq \mathbb{R}^d$, we have
\begin{equation}
|\aa(U)-\tilde{\aa}| \leq C \Lam(U)^{\frac{1}{2}}\sup_{p \in B_1} \left( J(U,p,\tilde{\aa}p) \right)^{\frac{1}{2}}. \label{lem:aJ} %
\end{equation}
\end{lem}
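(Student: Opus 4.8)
The plan is to compare both $\aa(U)$ and $\tilde{\aa}$ to the dual matrix $\aa_*(U)$, using the quadratic representation \eqref{lem:rq2} of $J$. Abbreviate $E:=\sup_{p\in B_1}J(U,p,\tilde{\aa}p)^{1/2}$; by the quadratic homogeneity of $p\mapsto J(U,p,\tilde{\aa}p)$ this gives $J(U,p,\tilde{\aa}p)\le E^2$ for every $|p|\le1$. Since $\aa_*(U)$ is positive definite, completing the square in $q$ in \eqref{lem:rq2} produces the identity
\[
J(U,p,q)=\tfrac12\,p\cdot(\aa(U)-\aa_*(U))\,p+\tfrac12\,(q-\aa_*(U)p)\cdot\aa_*^{-1}(U)(q-\aa_*(U)p),
\]
in which, by \eqref{lem:rq1}, both summands are nonnegative. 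Taking $q=\tilde{\aa}p$ and invoking $J(U,p,\tilde{\aa}p)\le E^2$ for $|p|\le1$ yields the two estimates $p\cdot(\aa(U)-\aa_*(U))p\le 2E^2$ and $(\tilde{\aa}p-\aa_*(U)p)\cdot\aa_*^{-1}(U)(\tilde{\aa}p-\aa_*(U)p)\le 2E^2$, valid for all $|p|\le1$.

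From the first estimate, since $\aa(U)-\aa_*(U)$ is symmetric and positive semidefinite, $|\aa(U)-\aa_*(U)|\le 2E^2$; combining with the trivial bound $\aa(U)-\aa_*(U)\le\aa(U)\le\Lam(U)\textsf{Id}$ from \eqref{lem:rq1} gives $|\aa(U)-\aa_*(U)|\le\min\{2E^2,\Lam(U)\}$. From the second estimate, the inequality $\aa_*(U)\le\Lam(U)\textsf{Id}$ together with the order-reversing property of the matrix inverse recalled in the Notation section gives $\aa_*^{-1}(U)\ge\Lam(U)^{-1}\textsf{Id}$, hence $|(\tilde{\aa}-\aa_*(U))p|^2\le 2\Lam(U)E^2$ for $|p|\le1$ and therefore $|\tilde{\aa}-\aa_*(U)|\le\sqrt{2\Lam(U)}\,E$.

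It then remains to combine these bounds via the triangle inequality,
\[
|\aa(U)-\tilde{\aa}|\le|\aa(U)-\aa_*(U)|+|\aa_*(U)-\tilde{\aa}|\le\min\{2E^2,\Lam(U)\}+\sqrt{2\Lam(U)}\,E,
\]
and to observe that $\min\{2E^2,\Lam(U)\}\le 2\Lam(U)^{1/2}E$ in either of the cases $E\le\Lam(U)^{1/2}$ (where $2E^2\le 2\Lam(U)^{1/2}E$) or $E>\Lam(U)^{1/2}$ (where $\Lam(U)\le\Lam(U)^{1/2}E$). This delivers \eqref{lem:aJ} with, say, $C=2+\sqrt2$.

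I expect the only genuinely delicate point to be precisely this last step: a direct triangle-inequality estimate produces a term of order $E^2$, which is not of the required form $C\Lam(U)^{1/2}E$ when $E$ is large relative to $\Lam(U)^{1/2}$, so one must feed in the a priori bound $\aa(U)-\aa_*(U)\le\Lam(U)\textsf{Id}$ and split into cases. It is also worth noting that $\tilde{\aa}$ is an arbitrary symmetric matrix, not assumed positive; the argument uses only the positive-definiteness of $\aa(U)$ and $\aa_*(U)$ and is otherwise insensitive to the size and sign of $\tilde{\aa}$.
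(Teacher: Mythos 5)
Your argument is correct: the completed-square identity $J(U,p,q)=\tfrac12\,p\cdot(\aa(U)-\aa_*(U))p+\tfrac12\,(q-\aa_*(U)p)\cdot\aa_*^{-1}(U)(q-\aa_*(U)p)$ follows from \eqref{lem:rq2}, both terms are nonnegative by \eqref{lem:rq1}, and your two resulting bounds, the triangle inequality, and the case split $E\lessgtr\Lam(U)^{1/2}$ do give \eqref{lem:aJ} with $C=2+\sqrt2$. The paper, however, proceeds differently and more directly: it never introduces $\aa_*(U)$ as an intermediate comparison point, but instead estimates, for arbitrary $q$,
\begin{equation}
|\aa(U)p-q|^2 \leq \Lam(U)\,(\aa(U)p-q)\cdot\aa^{-1}(U)(\aa(U)p-q)
= \Lam(U)\left( p\cdot\aa(U)p + q\cdot\aa^{-1}(U)q - 2p\cdot q\right)
\leq 2\Lam(U)\,J(U,p,q),
\end{equation}
using $\aa^{-1}(U)\geq\Lam(U)^{-1}\textup{\textsf{Id}}$ in the first step and $\aa^{-1}(U)\leq\aa_*^{-1}(U)$ (from $\aa_*(U)\leq\aa(U)$) in the last, then sets $q=\tilde{\aa}p$ and takes the supremum over $p\in B_1$; in effect it completes the square around $\aa(U)p$ rather than around $\aa_*(U)p$, so the quantity to be estimated, $|\aa(U)p-\tilde{\aa}p|$, appears in a single inequality chain with no triangle inequality, no quadratic-in-$E$ term, and hence no case analysis, and it yields the slightly better constant $C=\sqrt2$. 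What your route buys is the explicit observation that $J(U,p,\tilde{\aa}p)$ simultaneously controls the duality gap $\aa(U)-\aa_*(U)$ and the distance from $\tilde{\aa}$ to $\aa_*(U)$, which is conceptually illuminating; the price is the extra bookkeeping you correctly identified as the delicate point, which the paper's choice of square-completion avoids entirely.
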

\begin{proof}
By \eqref{lem:rq1} and \eqref{lem:rq2}, for every $p,q \in \mathbb{R}^d$, we have that 
\begin{align}
|\aa(U)p-q|^2 \leq & \Lam(U) \left( (\aa(U)p-q)\cdot \aa^{-1}(U) (\aa(U)p-q) \right) \\
=&\Lam(U) \left( p \cdot \aa(U)p +q \cdot \aa^{-1}(U)q -2p\cdot q \right) \\
\leq&\Lam(U) \left( p \cdot \aa(U)p +q \cdot \aa_*^{-1}(U)q -2p\cdot q \right) \\
=&2\Lam(U)J(U,p,q).
\end{align}
Choosing $q=\tilde{\aa}p$ and taking the supremum over $p \in B_1$, we obtain \eqref{lem:aJ}.
\end{proof}

Let us define homogenized coefficients $\bar{\aa}$. Subadditivity and stationarity lead to the monotonicity of $\ep \left[ \mu( \square_m,p) \right]$: for every $m \in \mathbb{N}$ and $p \in \mathbb{R}^d$,
\begin{equation}
\ep \left[ \mu( \square_{m+1},p) \right] \leq \ep \left[ \mu( \square_m,p) \right]. \label{momu} %
\end{equation}
To see this, we first apply \eqref{lem:sb2} with respect to the partition $\{z+\square_m:z\in \{ -3^m,0,3^m\}^d\}$ of $\square_{m+1}$ into its $3^d$ largest triadic subcubes, to get
\begin{equation}
\mu(\square_{m+1},p)\leq 3^{-d} \sum_{z \in \{ -3^m,0,3^m\}^d} \mu(z +\square_{m},p).
\end{equation} 
From stationarity, for every $z \in \mathbb{Z}^d$, the law of $\mu(z+\square_m,p)$ is same as the law of $\mu(\square_m,p)$. Taking the expectation of the previous display gives \eqref{momu}. By similar arguments, we have: for every $p,q \in\mathbb{R}^d$
\begin{align}
\ep \left[ \mu_*( \square_{m+1},q) \right] &\leq \ep \left[ \mu_*( \square_m,q) \right], \label{momu*} \\
\ep \left[ J(\square_{m+1},p,q) \right]  & \leq \ep \left[ J(\square_m,p,q) \right]. \label{moJ} 
\end{align}
Using the positivity of $\aa(U)$ and \eqref{def:mu2} we see that, for each $p \in \mathbb{R}^d$, the sequence $\{\ep[\mu(\square_m,p)]\}_{m \in \mathbb{N}}$ is bounded below and nonincreasing. It therefore has a limit, which we denote by
\begin{equation}
\bar{\mu}(p):= \lim_{m \to \infty} \ep[\mu(\square_m,p)] = \inf_{m \in \mathbb{N}} \ep[\mu(\square_m,p)]. \label{def:bmu} %
\end{equation}
From \eqref{lem:muqd} it follows that $p \mapsto \ep[\mu(U,p)]$ is also quadratic, and hence
\begin{equation}
p \mapsto \bar{\mu}(p) \text{ is quadratic}. \label{bmuqd} %
\end{equation}
By these facts, the homogenized cofficient $\bar{\aa} \in \mathbb{R}^{d \times d}$ can be defined as the symmetric matrix satisfying
\begin{equation}
\bar{\mu}(p)=\frac{1}{2}p\cdot \bar{\aa}p \quad (p \in \mathbb{R}^d).  \label{def:ba} %
\end{equation}

We give an estimate of the homogenized coefficients from local informations.
\begin{prop} \label{propprop:baa}
If $\ep[\Lam(\sq_0)]$ and $\ep[\lam(\square_0)^{-1}]$ are finite, then homogenized coefficients $\bar{\aa}$ is positive definite, and satisfies
\begin{equation}
\ep[\lam(\square_0)^{-1}]^{-1} \textup{\textsf{Id}} \leq \bar{\aa} \leq \ep[\Lam(\square_0)] \textup{\textsf{Id}}. \label{prop:baa} %
\end{equation}
\end{prop}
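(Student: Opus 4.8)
The plan is to work entirely with the quadratic forms $\bar\mu(p)=\tfrac12 p\cdot\bar\aa p=\inf_{m\in\mathbb{N}}\ep[\mu(\sq_m,p)]$ supplied by \eqref{def:bmu}--\eqref{def:ba}, and to reduce both the upper and the lower bound to \emph{unit-scale} estimates. The key observation is that the pointwise inequalities \eqref{ieq:mu}, \eqref{ieq:mu*} applied on $\sq_m$ involve $\Lam(\sq_m)$ and $\lam(\sq_m)^{-1}$, whose expectations are not under control and in general blow up as $m\to\infty$; so instead one should feed those inequalities only at the scale $\sq_0$ and propagate them to all $m$ by the monotonicity \eqref{momu}, \eqref{momu*} of the expected subadditive quantities. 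Both expectations $\ep[\mu(\sq_0,p)]$ and $\ep[\mu_*(\sq_0,q)]$ are finite by hypothesis, which makes every step below legitimate.

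For the upper bound: since $m\mapsto\ep[\mu(\sq_m,p)]$ is nonincreasing with infimum $\bar\mu(p)$, we have $\bar\mu(p)\le\ep[\mu(\sq_0,p)]$, while the right-hand inequality of \eqref{ieq:mu} (equivalently, testing \eqref{def:mu} with $l_p$) gives $\mu(\sq_0,p)\le\tfrac12\Lam(\sq_0)|p|^2$ pointwise on $\Om$. Taking expectations and comparing with \eqref{def:ba} yields $\bar\aa\le\ep[\Lam(\sq_0)]\textup{\textsf{Id}}$. For the lower bound I would use the nonnegativity of $J$: by \eqref{ieJ}, for every $m\in\mathbb{N}$ and every $q\in\mathbb{R}^d$ one has $\mu(\sq_m,p)\ge p\cdot q-\mu_*(\sq_m,q)$; taking expectations and then using \eqref{momu*} followed by the right-hand inequality of \eqref{ieq:mu*} at the unit scale gives $\ep[\mu(\sq_m,p)]\ge p\cdot q-\ep[\mu_*(\sq_m,q)]\ge p\cdot q-\ep[\mu_*(\sq_0,q)]\ge p\cdot q-\tfrac12\ep[\lam(\sq_0)^{-1}]|q|^2$. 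Letting $m\to\infty$ and then taking the supremum over $q\in\mathbb{R}^d$ (a one-variable optimization whose optimum is $q=\ep[\lam(\sq_0)^{-1}]^{-1}p$, i.e.\ the Legendre transform of $q\mapsto\tfrac12\ep[\lam(\sq_0)^{-1}]|q|^2$) produces $\bar\mu(p)\ge\tfrac12\ep[\lam(\sq_0)^{-1}]^{-1}|p|^2$, that is, $\bar\aa\ge\ep[\lam(\sq_0)^{-1}]^{-1}\textup{\textsf{Id}}$.

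Finally, positive definiteness of $\bar\aa$ comes for free from the lower bound once we note that \eqref{con:lam} gives $0<\lam(\sq_0)<\infty$ almost surely, hence $\lam(\sq_0)^{-1}>0$ a.s.\ and $\ep[\lam(\sq_0)^{-1}]\in(0,\infty)$ (finiteness being the standing hypothesis), so that $\ep[\lam(\sq_0)^{-1}]^{-1}$ is a strictly positive constant. There is no genuine obstacle in this argument; the only point requiring care is exactly the reduction to unit scale described above, i.e.\ resisting the temptation to bound $\mu(\sq_m,\cdot)$ and $\mu_*(\sq_m,\cdot)$ directly in terms of $\Lam(\sq_m)$ and $\lam(\sq_m)^{-1}$, and instead routing the estimates through \eqref{momu} and \eqref{momu*}.
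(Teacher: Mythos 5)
Your proof is correct and follows essentially the same route as the paper: the upper bound via monotonicity of $\ep[\mu(\sq_m,p)]$ together with the unit-scale bound $\mu(\sq_0,p)\le\tfrac12\Lam(\sq_0)|p|^2$, and the lower bound via $J\ge 0$, the monotonicity \eqref{momu*}, and the unit-scale bound on $\mu_*(\sq_0,q)$, with the same optimal choice $q=\ep[\lam(\sq_0)^{-1}]^{-1}p$. The remark about routing the estimates through unit scale rather than through $\Lam(\sq_m)$, $\lam(\sq_m)^{-1}$ is precisely the point of the paper's argument as well.
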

\begin{proof}
The upper bound is immmediate from \eqref{def:bmu} and \eqref{mub}: for every $p \in\mathbb{R}^d$
\begin{equation}
p \cdot \bar{\aa}p = 2\bar{\mu}(p) \leq \ep[2\mu(\square_0,p)] \leq \ep[\Lam(\square_0)]|p|^2.
\end{equation}

The lower bound comes from the \eqref{ieJ}, \eqref{momu*} and \eqref{ieq:mu*}: for every $m \in \mathbb{N}$ and every $p,q \in \mathbb{R}^d$
\begin{align}
2\ep[\mu(\square_m,p)] &\geq 2p\cdot q - 2\ep[\mu_*(\square_m,q)]  \\
&\geq 2p\cdot q -2\ep[\mu_*(\square_0,q)] \\
&\geq 2p\cdot q - \ep[\lam(\square_0)^{-1}] |q|^2.
\end{align}
Choosing $q=\ep[\lam(\square_0)^{-1}]^{-1}p$ and taking the infimum over $m \in \mathbb{N}$, we have
\begin{equation}
p \cdot \bar{\aa}p \geq \ep[\lam(\square_0)^{-1}]^{-1} |p|^2.
\end{equation}
Thus, \eqref{prop:baa} is proved.
\end{proof}

\begin{rem} \textup{The upper bound of $\mu(U,p)$ in \eqref{mub} is improved as}
\begin{equation}
\mu(U,p) \leq \fint_{U}\frac{1}{2}|\aa||p|^2.
\end{equation}
\textup{By the previous formula and a similar argument, we get a more precise estimate of $\bar{\aa}$:}
\begin{equation}
\ep\left[ \fint_{\square_0} \left| \aa^{-1} \right|\right]^{-1} \textup{\textsf{Id}} \leq \bar{\aa} \leq \ep \left[ \fint_{\square_0} \left| \aa \right| \right] \textup{\textsf{Id}}.
\end{equation}
\end{rem} 


\subsection{Quantitative Convergence of Subadditive Quantity} \label{sec:QCSQ} 
 
The purpose of this section is to see the effect of the unboundedness and the non-uniformly ellipticity of the coefficients on the estimate of the rate of convergence of $\aa(\sq_n)$ to $\bar{\aa}$. We introduce the following concepts to control the influence of events with large or small coefficients. 
\begin{dfn}
Fix $L<\infty$. We call a pair of two positive real sequences $\left( \{\del_n\},\{M_n\} \right)$ is \textit{suppressive} if $\{\del_n\}$ is decreasing, $\{M_n\}$ is increasing, $\del_0=M_0=1$ and, for every $n \in \mathbb{N}$,
\begin{equation}
\ep \left[ \lam\left(\square_n\right)^{-3}+\Lam\left(\sq_n\right)^3:\left\{ \lam\left(\sq_n\right)\leq \del_n\right\} \cup \left\{ \Lam\left(\sq_n\right)\geq M_n\right\} \right]\leq Le^{-n}. \label{def:sup}
\end{equation}
\end{dfn}
Roughly speaking, if the coefficients in the unit cube have exponential integrability, we can take polynomials as suppressive sequences. We give an example of suppressive sequences. Fix $\beta,\gamma>0$ and suppose that there exists $M< \infty$ satisfying
\begin{equation}
\ep \left[ \exp \left( \Lam(\sq_0)^\beta \right) \right] + \ep \left[ \exp \left( \lam(\sq_0)^{-\gamma} \right) \right] \leq M.
\end{equation}
Then, for $\beta'>\frac{1}{\beta}$ and $\gamma'>\frac{1}{\gamma}$, there exists $L(\beta,\gamma,M,\beta',\gamma')< \infty$ such that 
\begin{equation}
\del_n:=(n+1)^{-\gamma'} \quad \text{and} \quad M_n:=(n+1)^{\beta'}
\end{equation}
are suppressive. The proof of this fact is in Section \ref{proofking}. 

We note that \eqref{def:sup} gives a constant $C=C(L)<\infty$ such that, for every $p,q \geq 0$ satisfying $p+q \leq 3$ and $n \in \mathbb{N}$,
\begin{equation}
\ep \left[ \Lam(\sq_n)^p\lam(\sq_n)^{-q}: \left\{ \lam\left(\sq_n\right)\leq \del_n\right\} \cup \left\{ \Lam\left(\sq_n\right)\geq M_n\right\}\right] \leq Ce^{-n}. \label{ieq:sup}
\end{equation}
Indeed, the fact that $\lam(\sq_n)\leq \Lam(\sq_n)$ gives, for every $A \in \mathcal{F}$,
\begin{align}
\ep \left[ \Lam(\sq_n)^p\lam(\sq_n)^{-q}: A \right]  &\leq \ep \left[ \left(\Lam(\sq_n)\lam(\sq_n)^{-1}\right)^{\frac{3-p-q}{2}} \Lam(\sq_n)^p\lam(\sq_n)^{-q}:A\right] \\
&= \ep \left[ \Lam(\sq_n)^{\frac{3+p-q}{2}}\lam(\sq_n)^{-\left(\frac{3-p+q}{2}\right)}:A\right] \\
&\leq \ep \left[ \max\{\Lam(\sq_n), \lam(\sq_n)^{-1}\}^3:A\right] \\
&\leq \ep \left[ \lam\left(\square_n\right)^{-3}+\Lam\left(\sq_n\right)^3:A \right]. \label{ieq:Lplq}
\end{align}
Thus, we have
\begin{align}
&\ep \left[ \Lam(\sq_n)^p\lam(\sq_n)^{-q}: \left\{ \lam\left(\sq_n\right)\leq \del_n\right\} \cup \left\{ \Lam\left(\sq_n\right)\geq M_n\right\} \right] \\
\leq \,& \ep \left[ \Lam(\sq_n)^3+\lam(\sq_n)^{-3}: \left\{ \lam\left(\sq_n\right)\leq \del_n\right\} \cup \left\{ \Lam\left(\sq_n\right)\geq M_n\right\} \right] \leq Le^{-n}.
\end{align}
We also note that \eqref{def:sup} leads to the existence of a constant $C=C(L)<\infty$ such that 
\begin{equation}
\ep [\Lam(\sq_0)] +\ep [ \lam(\sq_0)^{-1} ] \leq C. \label{supep0}
\end{equation}
Indeed, \eqref{ieq:Lplq} and \eqref{def:sup} show that, for every $p,q \geq 0$ satisfying $p+q \leq 3$,
\begin{equation}
\ep \left[ \Lam(\sq_0)^p\lam(\sq_0)^{-q}\right] \leq \ep \left[ \lam\left(\square_0\right)^{-3}+\Lam\left(\sq_0\right)^3 \right] \leq L+2. \label{ieq:Lplq2}
\end{equation}
Hence, this inequality yields \eqref{supep0}.

The main result of this section is the following theorem, which gives the quantitative convergence of $\aa(\square_n)$ to $\bar{\aa}$.
\begin{thm}\label{thm:coa}
Fix $\alpha \in (0,\frac{1}{3})$. Suppose that there exist suppressive sequences $\left( \{\del_n\},\{M_n\} \right)$ such that, for every $n \in \mathbb{N}$,
\begin{equation}
\frac{M_n}{\del_n} \leq (n+1)^{\alpha}.
\end{equation}
Then, there exist constants $c=c(d,r,\alpha,L)>0$ and $C=C(d,r,\alpha,L)<\infty$ such that, for every $n \in \mathbb{N}$,
\begin{equation} 
\ep\left[  \left| \bar{\aa}-\aa(\square_n) \right|^2  \right] \leq C \exp\left(-cn^{1-3\alpha}\right). \label{thm:coaa}
\end{equation}
\end{thm}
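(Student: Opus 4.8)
The plan is to run the now-standard subadditive/renormalization iteration of \cite{MR3932093}, but carrying along at each triadic scale an explicit bookkeeping of the ``bad'' events $\{\lam(\sq_n)\le\del_n\}\cup\{\Lam(\sq_n)\ge M_n\}$ whose probability and contribution to moments are controlled by the suppressive hypothesis \eqref{def:sup}. By Lemma \ref{lemlem:|J} it suffices to prove $\ep[\sup_{p\in B_1}J(\sq_n,p,\bar\aa p)]\le C\exp(-cn^{1-3\alpha})$ after trading a factor $\Lam(\sq_n)^{1/2}$ via Cauchy--Schwarz (here the exponent $3$ in \eqref{def:sup} is exactly what makes $\ep[\Lam(\sq_n)|\,J\,|]$ and the like integrable on the bad set, with room to spare). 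So the real object to estimate is $F_n(p):=\ep[J(\sq_n,p,\bar\aa p)]$, or rather a slightly larger quantity $\tau_n$ measuring the $p$-uniform flatness of $J$ together with $|\bar\aa-\aa(\sq_n)|$ and $|\bar\aa-\aa_*(\sq_n)|$.

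First I would record the deterministic additivity/quadratic-response identity \eqref{lem:qr2}: splitting $\sq_{n+1}$ into its $3^d$ largest triadic subcubes $z+\sq_n$ gives that $\ep[J(\sq_n,p,q)]-\ep[J(\sq_{n+1},p,q)]$ equals the expected energy of the difference of minimizers, which by stationarity is $3^{-d}\sum_z \ep[$ energy of $\nab v(\cdot,\sq_{n+1})-\nab v(\cdot,z+\sq_n)]$. This is the usual mechanism forcing $\ep[J(\sq_n,\cdot,\cdot)]$ to be almost constant in $n$, hence (being monotone and bounded) to converge; the quantitative content is that the defect $\ep[J(\sq_n)]-\ep[J(\sq_{n+1})]$ is summable and controls the fluctuation of the minimizers across scales. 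Second, I would bring in the mixing assumption \eqref{as3}: because $J(z+\sq_n,p,q)$ for the $3^d$ subcubes live on $\sigma$-algebras $\mathcal F_{z+\sq_n}$ which are at $l^\infty$-distance $\ge 1$ once one passes to well-separated subfamilies (or, following \cite{MR3932093}, by comparing $J(\sq_{n+1})$ to an average over many disjoint translated copies of $\sq_n$ inside a much larger cube and using $\rho\le r$ to get a variance reduction by a geometric factor), one gets that $\operatorname{Var}[\mu(\sq_n,p)]$ and the scale-$n$ defect are controlled, with the maximal-correlation bound playing the role that independence plays in the finite-range case. Combining the additivity defect with the variance bound yields a recursion of the shape $\tau_{n+1}\le (1-\theta)\tau_n + (\text{error}_n)$ for a fixed $\theta\in(0,1)$, on the \emph{good} event where $\del_n\le\lam(\sq_n)\le\Lam(\sq_n)\le M_n$.

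The crux is the error term, and this is where the parameter $\alpha$ enters. On the good event all ellipticity ratios are $\le M_n/\del_n\le (n+1)^\alpha$, so every constant in the Caccioppoli/Meyers/energy comparison estimates that ordinarily would be ``$\le C(\Lambda/\lambda)$'' is now only polynomially large in $n$; on the bad event the contribution is absorbed by $\eqref{ieq:sup}$, i.e. it is $\le Ce^{-n}$, which is negligible. Propagating the recursion $\tau_{n+1}\le(1-\theta)\tau_n+ C(n+1)^{a}e^{-cn}+\text{(contraction with scale-dependent rate)}$ is delicate because the effective contraction rate per scale degrades like $(n+1)^{-O(\alpha)}$: after $n$ steps the product of contraction factors behaves like $\exp(-c\sum_{k\le n}(k+1)^{-3\alpha})\asymp\exp(-cn^{1-3\alpha})$, which is precisely the claimed bound and explains the exponent $1-3\alpha$ (and why $\alpha<1/3$ is needed for any decay at all). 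I expect the main obstacle to be exactly this: setting up the iterated inequality so that the polynomial-in-$n$ losses from the ellipticity ratio, the polynomial-in-$n$ losses from the deterministic PDE estimates, and the $e^{-n}$ gains from the suppressive tails combine to give a clean $\exp(-cn^{1-3\alpha})$, rather than merely a stretched-exponential with a worse exponent. Concretely I would (i) fix a large scale separation $N$ (number of small cubes packed in a big one) to be optimized at the end, (ii) prove the one-step estimate $\tau_{n+N}\le (1-c N^{-?})\tau_n + \text{error}$ with all $N$- and $n$-dependence explicit, (iii) iterate and choose the step size / thresholds as functions of $n$, and (iv) finally invoke Lemma \ref{lemlem:|J} with Cauchy--Schwarz against $\ep[\Lam(\sq_n)^2]^{1/2}$ (which is bounded by \eqref{def:sup}'s cube-$0$ consequence times a harmless polynomial, or more carefully by a telescoping of \eqref{def:sup} over scales) to convert the bound on $\ep[\sup_p J(\sq_n,p,\bar\aa p)]$ into \eqref{thm:coaa}.
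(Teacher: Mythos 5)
Your outline reproduces the paper's strategy in broad strokes (triadic subadditive iteration, variance contraction from the $\rho$-mixing bound, suppressive sequences making the bad events cost only $Ce^{-n}$, a per-scale contraction degrading like $1-c(n+1)^{-3\alpha}$ and hence the rate $\exp(-cn^{1-3\alpha})$, and a final appeal to Lemma \ref{lemlem:|J}), but as it stands it has genuine gaps rather than merely missing routine detail. The quantitative core --- converting the defect $\tau_n$ into a bound on $\ep\left[J(\sq_n,p,\cdot)\right]$ --- is exactly what you defer as ``the main obstacle'', whereas it is the actual content of Lemmas \ref{lemlem:varsa}, \ref{lemlem:flatv} and \ref{lemlem:epJn}: the mixing bound is applied to the spatial averages $\fint_{z+\sq_n}\nab v(\cdot,z+\sq_n,p,q)$ (not to $\mu$ or to $J$ themselves), the resulting variance decay is upgraded to $L^2$-flatness of $v(\cdot,\sq_{n+1},p,q)$ by the multiscale Poincar\'e inequality, and Caccioppoli plus an energy comparison bring this back to $\ep\left[J(\sq_n,p,\bar{\aa}_np)\right]\le C\,(M_{n+1}/\del_{n+1})^3\bigl(e^{-\ka n}+\sum_k e^{-\ka(n-k)}\tau_k\bigr)$; only when this is combined with $\tau_n\le C(F_n-F_{n+1})$ through the weighted sums $G_n$ does the degraded contraction $\theta_n=1-c(n+1)^{-3\alpha}$ emerge. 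Your intermediate claim of a recursion $\tau_{n+1}\le(1-\theta)\tau_n+\text{error}$ with a \emph{fixed} $\theta$ is not the structure that occurs; the fixed factor $\theta=\theta(d,r)$ lives only at the level of the variance of spatial averages (Step 2 of Lemma \ref{lemlem:varsa}). Moreover, you iterate directly with $\bar{\aa}$, but during the iteration the only affine slope one can identify is $\bar{\aa}_n^{-1}q-p$ with $\bar{\aa}_n:=\ep[\aa_*^{-1}(\sq_n)]^{-1}$, via \eqref{epspv}; $\bar{\aa}$ enters only at the end, after one shows $\{\bar{\aa}_n\}$ is Cauchy with the same rate and identifies its limit with $\bar{\aa}$ through $\ep[\mu(\sq_n,p)]\to\frac12 p\cdot\bar{\aa}p$. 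Taking $J(\sq_n,p,\bar{\aa}p)$ as the iterated quantity from the start is circular unless you supply this extra step.

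Your final conversion step is also flawed as written: Cauchy--Schwarz in $\ep\bigl[\Lam(\sq_n)\sup_{p\in B_1}J(\sq_n,p,\bar{\aa}p)\bigr]\le\ep[\Lam(\sq_n)^2]^{1/2}\,\ep\bigl[(\sup_{p}J)^2\bigr]^{1/2}$ requires a second moment of $\sup_pJ$, which the iteration does not provide (it only yields the first moment), and the pointwise bound $\sup_pJ\le C(\Lam(\sq_n)+\lam(\sq_n)^{-1}+1)$ just reproduces the same product you started from. The paper instead splits on $\{\Lam(\sq_n)\ge M_n\}$: on the complement, $\Lam(\sq_n)<M_n\le(n+1)^{\alpha}$ multiplies the exponentially small $\ep[\sup_pJ]$ harmlessly, while on the bad event \eqref{ieq:Jpq} together with \eqref{ieq:sup} gives $Ce^{-n}$; one also needs the reduction \eqref{ieq:posequ} from $\sup_{p\in B_1}$ to the finitely many directions $e_i$ that the iteration actually controls. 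These repairs are available with the paper's tools, but without them your plan does not yet yield \eqref{thm:coaa}.
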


To prove this theorom, we will estimate $\ep\left[ J\left(\sq_n,p,q\right) \right]$ with 
\begin{equation}
\begin{split}
\tau_n:&=\sup_{p,q\in B_1} \left( \ep \left[ J\left( \sq_n,p,q \right) \right] -\ep \left[ J\left( \sq_{n+1},p,q \right) \right] \right) \\
&=\sup_{p \in B_1} \left( \ep \left[ \mu \left(\sq_n,p \right) \right]-\ep \left[ \mu \left(\sq_{n+1},p \right) \right] \right) + \sup_{q \in B_1} \left( \ep \left[ \mu_* \left(\sq_n,q \right) \right]-\ep \left[ \mu_* \left(\sq_{n+1},q \right) \right] \right). \label{def:tau}
\end{split}
\end{equation}
and use iteration. 
We first estimate the variance of the spatial average of the gradient of the maximizer of $J$.

\begin{lem} \label{lemlem:varsa} %
Let $\left( \{\del_n\},\{M_n\} \right)$ be suppressive. Then, there exist $\ka=\ka(d,r)>0$ and $C=C(d,r,L) < \infty$ such that, for every $p,q \in B_1$ and $n \in \mathbb{N}$,
\begin{equation}
\textup{Var}\left[ \fint_{\sq_n} \nab v(\cdot,\sq_n,p,q) \right] \leq Ce^{-\ka n}+\frac{C}{\del_{n}}\sum_{m=0}^{n-1} e^{-\ka (n-m-1)} \tau_m. \label{lem:varsa} %
\end{equation}
\end{lem}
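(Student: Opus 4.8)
The plan is to estimate the variance of the spatial average $\fint_{\sq_n}\nab v(\cdot,\sq_n,p,q)$ by combining the covariance/mixing structure coming from the maximal correlation bound \eqref{as3} with the additivity-defect control supplied by the quadratic response identity \eqref{lem:qr2}. The starting observation should be a decomposition of $\sq_n$ into its subcubes at all intermediate scales $\sq_m$ ($0\le m\le n$): writing $X_n:=\fint_{\sq_n}\nab v(\cdot,\sq_n,p,q)$, I would compare $X_n$ at scale $n$ with the average over the $3^{d}$ subcubes at scale $n-1$ of the corresponding quantity $\fint_{z+\sq_{n-1}}\nab v(\cdot,\sq_n,p,q)$, and then with the average of the $\fint_{z+\sq_{n-1}}\nab v(\cdot,z+\sq_{n-1},p,q)$. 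The first comparison is an identity; the error in the second is governed by the quadratic response, since by \eqref{lem:qr2} the sum over the partition of $\fint_{U_i}\tfrac12(\nab v(\cdot,U,p,q)-\nab v(\cdot,U_i,p,q))\cdot\aa(\nab v(\cdot,U,p,q)-\nab v(\cdot,U_i,p,q))$ equals $\sum_i \tfrac{|U_i|}{|U|}(J(U_i,p,q)-J(U,p,q))$, whose expectation is exactly (a piece of) $\tau_m$. The lower ellipticity bound $\lam(\sq_n)\ge\del_n$ on the good event — and \eqref{ieq:sup} together with \eqref{ieq:v2} to absorb the bad event $\{\lam(\sq_n)\le\del_n\}\cup\{\Lam(\sq_n)\ge M_n\}$ into a $Ce^{-n}$ term — is what converts that energy into an $L^2$ bound on the gradient difference, producing the factor $1/\del_n$ in \eqref{lem:varsa}.

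Next, I would exploit the $\mathbb{Z}^d$-stationarity and the maximal-correlation decay to turn this multiscale decomposition into a geometric sum. The point is that the random variables $\fint_{z+\sq_{m}}\nab v(\cdot,z+\sq_m,p,q)$, for $z$ ranging over translates by $3^m\mathbb{Z}^d$, are $\mathcal{F}_{z+\sq_m}$-measurable, hence the covariance between any two of them whose cubes are at $d_\infty$-distance $\ge 1$ is bounded via \eqref{as3} by $r$ times the product of standard deviations; since the cubes at scale $m\ge 1$ are unit-separated after one triadic step, an averaging of $3^d$ such variables contracts the variance by a fixed factor $r+ (\text{combinatorial constant})\cdot 3^{-d}<1$, i.e. by $e^{-\ka}$ for some $\ka=\ka(d,r)>0$. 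Iterating this contraction down from scale $n$ to scale $m$, while at each step picking up the quadratic-response error term at that scale (weighted by the accumulated contraction $e^{-\ka(n-m-1)}$ and by $1/\del_m$, or more crudely $1/\del_n$ using monotonicity of $\{\del_n\}$), yields precisely the sum $\frac{C}{\del_n}\sum_{m=0}^{n-1}e^{-\ka(n-m-1)}\tau_m$, and the base case contributes $Ce^{-\ka n}$ after bounding $\mathrm{Var}[X_0]$ using \eqref{ieq:v2} and \eqref{supep0}.

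The main obstacle I anticipate is making the variance-contraction step rigorous when the coefficient field is only $\mathbb{Z}^d$-stationary and the maximizers $v(\cdot,z+\sq_m,p,q)$ are not a priori in $\mathcal{L}^2$ of the probability space with controlled second moments — one must first truncate on the good ellipticity event (using \eqref{ieq:sup}) to get genuinely square-integrable, localized random variables, apply the maximal-correlation inequality to those, and then carefully estimate the contribution of the truncation error using \eqref{ieq:v2} and \eqref{ieq:Lplq}, checking that the exponents $p,q\le 3$ appearing in $\mathrm{Var}[\nab v]$ (which involves $\Lam/\lam$, $1/\lam^2$, $1/\lam$) are all controlled by the cubic moment in \eqref{def:sup}. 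A secondary technical point is bookkeeping the fact that $\rho(\mathcal{F}_U,\mathcal{F}_V)\le r$ only gives pairwise decorrelation, so to contract the variance of a sum of $3^d$ terms one expands $\mathrm{Var}[\sum Y_z]=\sum \mathrm{Var}[Y_z]+\sum_{z\ne z'}\mathrm{Cov}[Y_z,Y_{z'}]$ and bounds the off-diagonal terms by $r\cdot\mathrm{Var}[Y_z]^{1/2}\mathrm{Var}[Y_{z'}]^{1/2}\le \tfrac r2(\mathrm{Var}[Y_z]+\mathrm{Var}[Y_{z'}])$, so that the normalized average has variance at most $3^{-d}(1+(3^d-1)r)\max_z\mathrm{Var}[Y_z]$; for this to be a genuine contraction one needs $1+(3^d-1)r<3^d$, i.e. $r<1$, which is exactly our hypothesis, and then $\ka$ is chosen so that $e^{-\ka}$ exceeds this ratio while the quadratic-response increments are reinserted at each scale.
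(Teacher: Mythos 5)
Your architecture is the same as the paper's: a one-scale comparison in which the quadratic response \eqref{lem:qr2}, combined with the split into the event $\{\lam(\sq_{n+1})>\del_{n+1}\}$ and its complement (the latter absorbed via \eqref{ieq:Jpq} and \eqref{ieq:sup}), produces the error $C\del_{n+1}^{-1/2}\tau_n^{1/2}+Ce^{-\ka' n}$; then a variance contraction for the average of the localized quantities $\fint_{z+\sq_n}\nab v(\cdot,z+\sq_n,p,q)$ via stationarity and \eqref{as3}; and finally an iteration down to scale $0$, where the variance is bounded using \eqref{ieq:v2}. (Your worry about truncating to obtain square-integrable localized variables is unnecessary: the second moments are already finite by \eqref{ieq:v2} together with the suppressive bound, and the truncation enters only to produce the $Ce^{-n}$ error term, exactly as you already use it in the first step.)

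The genuine problem is in your decorrelation bookkeeping. You bound \emph{every} off-diagonal covariance by $r\,\mathrm{Var}[Y_z]^{1/2}\mathrm{Var}[Y_{z'}]^{1/2}$, which presupposes that any two distinct subcubes $z+\sq_n$, $z'+\sq_n$ of $\sq_{n+1}$ are at $d_\infty$-distance at least $1$; this is false for neighboring subcubes, which touch, so $d_\infty=0$ and \eqref{as3} gives nothing for those pairs. Consequently your claimed contraction factor $3^{-d}\bigl(1+(3^d-1)r\bigr)$ is not justified as written. The conclusion survives with the weaker, correct argument used in the paper: bound all identical and neighboring pairs trivially by $\mathrm{Var}[X_0]$ via Cauchy--Schwarz and stationarity, and apply \eqref{as3} only to at least one pair of subcubes that are neither identical nor neighbors (such a pair exists among the $3^d$ subcubes, e.g.\ the two extreme cubes along one coordinate axis, separated by $3^n\ge 1$). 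This yields
\begin{equation}
\mathrm{Var}\Bigl[\,3^{-d}\!\!\sum_{z\in 3^n\mathbb{Z}^d\cap\sq_{n+1}}\!\!X_z\Bigr]\le\Bigl(1-\frac{1-r}{3^{2d}}\Bigr)\mathrm{Var}[X_0],
\end{equation}
i.e.\ a contraction constant $\theta=\theta(d,r)<1$ much closer to $1$ than yours; this only changes the value of $\ka$, not the form of \eqref{lem:varsa}. With this correction, your iteration (carried out at the level of standard deviations and then squared, using Cauchy--Schwarz to pass from $\bigl(\sum_m e^{-\ka(n-m-1)}\tau_m^{1/2}\bigr)^2$ to $C\sum_m e^{-\ka(n-m-1)}\tau_m$) reproduces the paper's proof.
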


\begin{proof}
\textit{Step1.} Fix $n \in \mathbb{N}$, $p,q \in B_1$ and set $v:=v(\cdot,\sq_{n+1},p,q)$, $v_z:=v(\cdot,z+\sq_n,p,q)$. We begin by proving that there exist $\ka'>0$ and $C=C(d,L)<\infty$ such that 
\begin{equation}
\text{Var} \left[\fint_{\sq_{n+1}} \nab v \right]^{1/2} \leq \text{Var}\left[ 3^{-d} \sum_{z \in 3^n\mathbb{Z}^d\cap \sq_{n+1}}\fint_{z+\sq_{n}}\nab v_z \right]^{1/2}+\frac{C}{\del_{n+1}^{1/2}}\tau_{n}^{1/2}+Ce^{-\ka' n}. \label{pr:var} %
\end{equation}
Using Cauchy-Schwarz inequality and Jensen's inequality, we get 
\begin{align}
&\text{Var}\left[ \fint_{\sq_{n+1}} \nab v \right]^{1/2} \\ 
\leq& \text{Var} \left[ 3^{-d} \sum_{z \in 3^n\mathbb{Z}^d\cap \sq_{n+1}} \fint_{z+\sq_n} \left( \nab v -\nab v_z \right) \right]^{1/2} +\text{Var} \left[ 3^{-d} \sum_{z \in 3^n\mathbb{Z}^d\cap \sq_{n+1}} \fint_{z+\sq_n} \nab v_z \right]^{1/2} \\
\leq& \ep \left[ 3^{-2d} \left| \sum_{z \in 3^n\mathbb{Z}^d\cap \sq_{n+1}} \fint_{z+\sq_n} \left( \nab v -\nab v_z \right) \right|^2 \right]^{1/2} +\text{Var} \left[ 3^{-d} \sum_{z \in 3^n\mathbb{Z}^d\cap \sq_{n+1}} \fint_{z+\sq_n} \nab v_z \right]^{1/2} \\
\leq& \ep \left[ 3^{-d} \sum_{z \in 3^n\mathbb{Z}^d\cap \sq_{n+1}}  \fint_{z+\sq_n} \left| \nab v -\nab v_z \right|^2 \right]^{1/2} +\text{Var} \left[ 3^{-d} \sum_{z \in 3^n\mathbb{Z}^d\cap \sq_{n+1}} \fint_{z+\sq_n} \nab v_z \right]^{1/2}. 
\end{align}
By \eqref{def:lam} and \eqref{lem:qr2}, we see that
\begin{align}
&\sum_{z \in 3^n\mathbb{Z}^d\cap \sq_{n+1}} \fint_{z+\sq_n} \left| \nab v -\nab v_z \right|^2 \\
&\leq \frac{1}{\lam(\sq_{n+1})} \sum_{z \in 3^n\mathbb{Z}^d\cap \sq_{n+1}} \fint_{z+\sq_n} (\nab v- \nab v_z)\cdot \aa (\nab v -\nab v_z) \\
&=\frac{2}{\lam(\sq_{n+1})}\sum_{z \in 3^n\mathbb{Z}^d\cap \sq_{n+1}} \left(J \left(z+\sq_n,p,q\right)-J\left(\sq_{n+1},p,q\right) \right). \label{pr:var2}
\end{align}
By stationarity, we have
\begin{align}
&\ep \left[ \frac{1}{\lam(\sq_{n+1})}\sum_{z \in 3^n\mathbb{Z}^d\cap \sq_{n+1}} \left(J \left(z+\sq_n,p,q\right)-J\left(\sq_{n+1},p,q\right) \right) :\lam(\sq_{n+1})>\del_{n+1}\right] \\
\leq&\frac{1}{\del_{n+1}} \ep \left[ \sum_{z \in 3^n\mathbb{Z}^d\cap \sq_{n+1}} \left(J \left(z+\sq_n,p,q\right)-J\left(\sq_{n+1},p,q\right) \right)\right]\leq \frac{C}{\del_{n+1}}\tau_n
\end{align}
Since $\left( \{\del_n\},\{M_n\} \right)$ is suppressive, it follows from \eqref{ieq:Jpq} and \eqref{ieq:sup} that
\begin{align}
&\ep \left[ \frac{1}{\lam(\sq_{n+1})}\sum_{z \in 3^n\mathbb{Z}^d\cap \sq_{n+1}} \left(J \left(z+\sq_n,p,q\right)-J\left(\sq_{n+1},p,q\right) \right) :\lam(\sq_{n+1})\leq\del_{n+1}\right] \\
\leq&\,C\,\ep \left[ \frac{\Lam(\sq_{n+1})}{\lam(\sq_{n+1})}+\frac{1}{\lam(\sq_{n+1})^2}+\frac{1}{\lam(\sq_{n+1})}:\lam(\sq_{n+1})\leq\del_{n+1}\right]\leq Ce^{-n}
\end{align}
Combining the four previous displays yields \eqref{pr:var}.

\textit{Step 2.} In this step, we use uniform decrease of maximal correlation and prove the existence of $\theta=\theta(d,r) \in (0,1)$ such that, for every $p,q \in B_1$ and $n \in \mathbb{N}$,
\begin{equation}
\text{Var} \left[ 3^{-d} \sum_{z \in 3^n\mathbb{Z}^d\cap \sq_{n+1}} \fint_{z+\sq_n}\nab v \left( \cdot,z+\sq_n,p,q \right) \right]^{1/2} \leq \theta \, \text{Var} \left[ \fint_{\sq_n} \nab v \left(\cdot,\sq_n,p,q\right) \right]^{1/2}. \label{pr:var2}
\end{equation} 
For the sake of simplicity, we denote $X_z:=\fint_{z+\sq_n}\nab v \left(\cdot,z+\sq_n,p,q \right)$ for each $z \in 3^n\mathbb{Z}^d$. Expanding the variance, we find that
\begin{equation}
\text{Var}\left[ 3^{-d} \sum_{z \in 3^n\mathbb{Z}^d\cap \sq_{n+1}} X_z\right] =3^{-2d} \sum_{z,z' \in 3^n\mathbb{Z}^d\cap \sq_{n+1}} \text{Cov} \left[ X_z,X_{z'} \right].
\end{equation}
Using the Cauchy-Schwarz inequality and stationarity yields
\begin{equation}
\left| \text{Cov} \left[ X_z,X_{z'} \right] \right| \leq \text{Var} \left[ X_z \right]^{1/2} \text{Var} \left[ X_{z'} \right]^{1/2} \leq \text{Var} \left[ X_0 \right].
\end{equation}
We see that each component of $X_z$ belongs to $\mathcal{L}^2 \left( \mathcal{F}_{z+\sq_n}\right)$. In the case when $z+\sq_n$ and $z'+\sq_n$ are neither identical nor neighbors, \eqref{as3} shows that
\begin{equation}
\left| \text{Cov} \left[ X_z,X_{z'} \right] \right| \leq r \text{Var} \left[ X_z \right]^{1/2} \text{Var} \left[ X_{z'} \right]^{1/2} \leq r \text{Var} \left[ X_0 \right].
\end{equation}
Since we can find at least one pair of subcubes $z+\sq_n$ and $z'+\sq_n$ which are neither identical nor neighbors, we obtain
\begin{equation}
\text{Var} \left[ 3^{-d} \sum_{z \in 3^n\mathbb{Z}^d\cap \sq_{n+1}} X_z\right]  \leq \left( 1-\frac{1-r}{3^{2d}} \right) \text{Var} [X_0].
\end{equation}
Thus, \eqref{pr:var2} is proved for $\theta:=\left( 1-\displaystyle\frac{1-r}{3^{2d}} \right)^{1/2}$.

\textit{Step 3.} 
We complete the arguments by iterating \eqref{pr:var} and \eqref{pr:var2}.  Fix $p,q \in B_1$ and denote
\begin{equation}
\sig_n:=\text{Var} \left[ \fint_{\sq_n} \nab v(\cdot,\sq_n,p,q) \right].
\end{equation}
Combining \eqref{pr:var} and \eqref{pr:var2}, it follows that, for every $n \in \mathbb{N}$,
\begin{equation}
\sig_{n+1}\leq \theta\sig_n+\frac{C}{\del_{n+1}^{1/2}}\tau_{n}^{1/2}+Ce^{-\ka' n}.
\end{equation}  
An iteration and decrease of $\{ \del_n \}$ yield
\begin{equation}
\sig_n \leq \theta^n \sig_0 + C\sum_{m=0}^{n-1} \theta^{n-m-1}e^{-\ka' m} +\frac{C}{\del_{n}^{1/2}}\sum_{m=0}^{n-1} \theta^{n-m-1} \tau_{m}^{1/2}. \label{pr:var3}
\end{equation}
Notice that $\sig_0 \leq C=C(L)$. Indeed, using Cauchy-Schwarz inequality, we have
\begin{equation}
\sig_0^2 \leq \ep\left[ \left| \fint_{\sq_0} \nab v(\cdot,\sq_0,p,q) \right|^2 \right] \leq \ep \left[ \fint_{\sq_0} \left| \nab v(\cdot,\sq_0,p,q) \right|^2 \right] .
\end{equation}
Hence, this inequality, \eqref{ieq:v2} and \eqref{ieq:Lplq2} lead to
\begin{equation}
\sig_0^2 \leq \ep \left[ \fint_{\sq_0} \left| \nab v(\cdot,\sq_0,p,q) \right|^2 \right] \leq \ep \left[ \frac{\Lam(\sq_0)}{\lam(\sq_0)} + \frac{1}{\lam(\sq_0)^2}+\frac{1}{\lam(\sq_0)}\right] \leq C.
\end{equation}
Squaring \eqref{pr:var3} and setting $\ka :=\ka' \land -\log\theta$, we get
\begin{align}
\sig_n^2&\leq 3\theta^{2n}\sig_0^2 +3\left( C\sum_{m=0}^{n-1}\theta^{n-m-1}e^{-\ka' m} \right)^2+3\left( \frac{C}{\del_n^{1/2}}\sum_{m=0}^{n-1} \theta^{n-m-1}  \tau_{m}^{1/2} \right)^2 \\
&\leq  Ce^{-\ka n}+\frac{C}{\del_n} \left( \sum_{m=0}^{n-1} e^{-\ka(n-m-1) } \tau_{m} \right).
\end{align}
The proof is complete.
\end{proof}

By applying the similar arguments as Proposition \ref{propprop:baa}, we have, for every $n \in \mathbb{N}$,
\begin{equation}
\ep \left[ \Lam \left( \sq_0 \right) \right]^{-1} \textup{\textsf{Id}} \leq \ep \left[ \aa_{*}^{-1} \left(\sq_n \right) \right] \leq \ep \left[ \aa_{*}^{-1} \left(\sq_0 \right) \right] \leq \ep \left[ \lam \left( \sq_0 \right)^{-1} \right] \textup{\textsf{Id}}.  \label{ieq:mu*n}
\end{equation}
\begin{dfn}
We define a deterministic matrix $\bar{\aa}_n$ by
\begin{equation}
\bar{\aa}_n:=\ep \left[ \aa_{*}^{-1} (\sq_n) \right]^{-1}. \label{def:ban}
\end{equation} 
\end{dfn}
By \eqref{ieq:mu*n}, we have the bound of $\bar{\aa}_n$ such that, for every $n \in \mathbb{N}$,
\begin{equation}
\ep[\lam(\square_0)^{-1}]^{-1} \textup{\textsf{Id}} \leq \bar{\aa}_0 \leq \bar{\aa}_n \leq \ep[\Lam(\square_0)] \textup{\textsf{Id}}. \label{ieq:ban}
\end{equation}
Since \eqref{supep0} and \eqref{ieq:ban}, the existence of suppressive sequences leads the existence of constants $c=c(L)>0$ and $C=C(L)<\infty$ such that
\begin{equation}
c \,\textup{\textsf{Id}} \leq \bar{\aa}_n \leq C\, \textup{\textsf{Id}}. \label{ieq:ban3}
\end{equation}
We will show that $\bar{\aa}_n$ converges to the homogenized coefficients $\bar{\aa}$. This is not trivial, since $\bar{\aa}_n$ is defined via $\aa_{*}(\sq_n)$, while $\bar{\aa}$ is defined via $\aa(\sq_n)$.

We denote some properties of $\bar{\aa}_n$. \eqref{lem:rq4} leads, for every $q \in \mathbb{R}^d$,
\begin{equation}
\ep \left[ \fint_{\sq_n} \nab v \left( \cdot,\sq_n,0,q \right) \right]= \bar{\aa}_n^{-1}q.
\end{equation}
On the other hand, since the function $v(\cdot,\sq_n,p,0)$ is belong to $-l_p+H_0^1(\sq_n)$, it follows that, for every $p \in \mathbb{R}^d$,
\begin{equation}
\fint_{\sq_n} \nab v(\cdot,\sq_n,p,0)=-p.
\end{equation}
Combining the previous two displays and \eqref{vli}, we have
\begin{equation}
\ep \left[ \fint_{\sq_n} \nab v \left( \cdot,\sq_n,p,q \right) \right]= \bar{\aa}_n^{-1}q-p, \label{epspv}%
\end{equation}
which is related to the quantity appeared in Lemma \ref{lemlem:varsa}.

We also note that 
\begin{equation}
\ep \left[ J \left( \sq_n,p,\bar{\aa}_np \right) \right] \leq \ep \left[ J \left( \sq_n,p,q \right) \right] \leq \ep \left[ J \left( \sq_n,p,\bar{\aa}_np \right) \right]+\frac{1}{2} \ep \left[ \lam \left(\sq_0 \right)^{-1} \right] \left| q-\bar{\aa}_np \right|^2. \label{ieq:jban}
\end{equation}
Indeed, it follows from \eqref{lem:rq2} that 
\begin{equation}
\ep \left[ J \left( \sq_n,p,q \right) \right] - \ep \left[ J \left( \sq_n,p,\bar{\aa}_np \right) \right] =\frac{1}{2} \left| \bar{\aa}_n^{-1/2} \left( q - \bar{\aa}_np \right) \right|^2.
\end{equation}
Hence, \eqref{ieq:ban} yields \eqref{ieq:jban}.

We next use the previous lemma and the multiscale Poincar\'{e} inequality to estimate the flatness of $v(\cdot, \sq_{n+1},p,,q)$ by $\tau_0,\tau_1,\dots,\tau_n$.


\begin{lem} \label{lemlem:flatv}
Let $\left( \{\del_n\},\{M_n\} \right)$ be suppressive. Then, there exist $\ka=\ka(d,r)>0$ and $C=C(d,r,L)<\infty$ such that, for every $n \in \mathbb{N}$ and $p,q \in B_1$,
\begin{equation}
\ep \left[ \fint_{\sq_{n+1}} \left| v(x,\sq_{n+1},p,q) - \left( \bar{\aa}_n^{-1}q-p \right) \cdot x \right|^2 dx \right] \leq \frac{C}{\del_{n+1}}3^{2n} \left( e^{-\ka n}+\sum_{k=0}^{n} e^{-\ka (n-k)}\tau_{k} \right). \label{lem:flatv}
\end{equation}
\end{lem}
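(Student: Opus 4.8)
The plan is to apply the multiscale Poincar\'e inequality to the function $v(x,\sq_{n+1},p,q) - \left( \bar{\aa}_n^{-1}q-p \right)\cdot x$. Set $w(x):=v(x,\sq_{n+1},p,q) - \left( \bar{\aa}_n^{-1}q-p \right)\cdot x$; then $\nab w = \nab v(\cdot,\sq_{n+1},p,q) - \left( \bar{\aa}_n^{-1}q-p \right)$, so by \eqref{epspv} the gradient $\nab w$ has expectation zero on $\sq_{n+1}$ (and, by stationarity, on every triadic subcube $z+\sq_m\subseteq\sq_{n+1}$). The multiscale Poincar\'e inequality controls $\|w\|_{\ll^2(\sq_{n+1})}$ (after subtracting its mean, but the mean is handled by choosing the additive constant in $v$, or absorbed) by a sum over scales $m=0,\dots,n+1$ of the spatial averages of $\nab w$ over the triadic subcubes at scale $m$, weighted by $3^m$. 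Concretely one gets a bound of the form $\|w\|_{\ll^2(\sq_{n+1})}^2 \lesssim \|\nab w\|_{\ll^2(\sq_{n+1})}^2 + \sum_{m=0}^{n}3^{2m}\, 3^{-d(n+1)}\!\!\sum_{z\in 3^m\mathbb{Z}^d\cap\sq_{n+1}}\big|\fint_{z+\sq_m}\nab w\big|^2$; the dominant term in the sum is the coarsest scale $m=n$, giving the prefactor $3^{2n}$ that appears on the right-hand side of \eqref{lem:flatv}.

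Next I would take expectations. For the term $\big|\fint_{z+\sq_m}\nab w\big|^2 = \big|\fint_{z+\sq_m}\nab v(\cdot,z+\sq_m,\dots)\big|^2$ — wait, this is not literally $v(\cdot,z+\sq_m,\cdot)$ but $v(\cdot,\sq_{n+1},\cdot)$ restricted; the correct move is to further split $\fint_{z+\sq_m}\nab v(\cdot,\sq_{n+1},p,q)$ into $\fint_{z+\sq_m}\nab v(\cdot,z+\sq_m,p,q)$ plus the difference, control the difference in $L^2$ by quadratic response \eqref{lem:qr2} (summed over the partition of $\sq_{n+1}$ into scale-$m$ subcubes, which telescopes through intermediate scales and produces $\sum$ of $J$-increments, hence $\tau_k$ for $k$ between $m$ and $n$), and bound the variance of $\fint_{z+\sq_m}\nab v(\cdot,z+\sq_m,p,q)$ by Lemma \ref{lemlem:varsa}, using \eqref{epspv} to identify its expectation with $\bar{\aa}_m^{-1}q-p$ and \eqref{ieq:jban}/\eqref{ieq:ban} to compare $\bar{\aa}_m$ with $\bar{\aa}_n$ at a cost controlled again by the $\tau_k$'s. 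The $\lam(\sq_{n+1})^{-1}$ factors arising from quadratic response and from Caccioppoli-type steps are handled exactly as in the proof of Lemma \ref{lemlem:varsa}: split according to $\{\lam(\sq_{n+1})>\del_{n+1}\}$, where one pays a factor $\del_{n+1}^{-1}$, and its complement, where suppressiveness \eqref{ieq:sup} gives an $e^{-n}$ bound absorbed into $e^{-\ka n}$. Summing the geometric weights $e^{-\ka(n-k)}$ against $\tau_k$ and collecting the $3^{2n}$ and $\del_{n+1}^{-1}$ prefactors yields \eqref{lem:flatv}.

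The main obstacle I expect is bookkeeping the multi-scale cascade cleanly: the multiscale Poincar\'e inequality forces one to estimate $\fint_{z+\sq_m}\nab v(\cdot,\sq_{n+1},p,q)$ at \emph{every} intermediate scale $m$, and at each scale one needs both (i) the variance estimate from Lemma \ref{lemlem:varsa} — which itself already carries a $\del_m^{-1}\sum e^{-\ka(\cdot)}\tau$ tail — and (ii) the quadratic-response bound for the discrepancy between the scale-$(n{+}1)$ maximizer and the scale-$m$ maximizers, which must be iterated through the scales $m, m{+}1,\dots,n$ and produces another telescoping sum of $\tau_k$. Keeping the $\del$-factors consistent (using monotonicity of $\{\del_n\}$ to replace all $\del_m^{-1}$ by $\del_{n+1}^{-1}$) and verifying that the geometric factor $3^{2m}$ at intermediate scales is dominated by $3^{2n}$ so that only the single prefactor $3^{2n}$ survives, is where the care is required; the probabilistic input (maximal-correlation decay, suppressiveness) is already fully packaged in Lemmas \ref{lemlem:varsa} and in \eqref{ieq:sup}.
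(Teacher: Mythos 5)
Your plan is essentially the paper's proof: apply the multiscale Poincar\'e inequality to $v(\cdot,\sq_{n+1},p,q)-(\bar{\aa}_n^{-1}q-p)\cdot x$, then at each scale $m$ split $\fint_{z+\sq_m}\nab v(\cdot,\sq_{n+1},p,q)$ into the average of the local maximizer (whose mean is $\bar{\aa}_m^{-1}q-p$ by \eqref{epspv} and whose variance is Lemma \ref{lemlem:varsa}), a quadratic-response discrepancy, and the correction $|\bar{\aa}_m^{-1}q-\bar{\aa}_n^{-1}q|$, all costed in $\tau_k$'s with the $\lam(\sq_{n+1})^{-1}$ factors handled by the suppressiveness split. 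The only cosmetic differences from the paper are that it first decomposes $v(\cdot,p,q)$ into its $(p,0)$ and $(0,q)$ parts so that the $H_0^1$ and mean-zero versions of Lemma \ref{lemlem:mpi} dispose of the mean of $w$ (the point you only waved at with ``handled or absorbed''), and that it produces the $3^{2n}$ prefactor via the Cauchy--Schwarz step $\bigl(\sum_{m}3^mX_m^{1/2}\bigr)^2\leq C\,3^n\sum_m 3^mX_m$ rather than your $\sum_m 3^{2m}X_m$ form, which as literally stated would carry an extra factor of $(n+1)$.
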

\begin{proof}
\textit{Step.1.} We first use the multiscale Poincar\'{e} inequality. Since $v(\cdot,\sq_{n+1},p,0)$ is belong to $-l_p+H_0^1(\sq_{n+1})$ and the spatial average of $v(\cdot,\sq_{n+1},0,q)$ is zero, the multiscale Poincar\'{e} inequality (Lemma \ref{lemlem:mpi}) shows that
\begin{align}
&\ep \left[ \fint_{\sq_{n+1}} \left| v(x,\sq_{n+1},p,q) - \left( \bar{\aa}_n^{-1}q-p \right) \cdot x \right|^2 dx \right] \\
\leq&\,2 \left( \ep \left[ \fint_{\sq_{n+1}} \left| v(x,\sq_{n+1},p,0) +p \cdot x \right|^2 dx \right] +\ep \left[ \fint_{\sq_{n+1}} \left| v(x,\sq_{n+1},0,q) - \bar{\aa}_n^{-1}q\cdot x \right|^2 dx \right]\right) \\
\leq&\,C\,\ep \left[ \fint_{\sq_{n+1}} \left| \nab v(\cdot,\sq_{n+1},p,0) +p\right|^2 \right]+C\,\ep \left[ \fint_{\sq_{n+1}} \left| \nab v(\cdot,\sq_{n+1},0,q) -\bar{\aa}_n^{-1}q\right|^2 \right] \\
\quad&+C\, \ep \left[ \left( \sum_{m=0}^{n} 3^m \left( 3^{-(n+1-m)d}\sum_{y \in 3^m\mathbb{Z}^d\cap \sq_{n+1}} \left| \fint_{y+\sq_m} \nab v \left( \cdot,\sq_{n+1},0,q\right)-\bar{\aa}_n^{-1}q \right|^2 \right)^{1/2} \right)^{2} \right] \\
\quad&+C\,\ep\left[ \left( \sum_{m=0}^{n} 3^m \left( 3^{-(n+1-m)d}\sum_{y \in 3^m\mathbb{Z}^d\cap \sq_{n+1}} \left| \fint_{y+\sq_m} \nab v \left( \cdot,\sq_{n+1},p,0\right)+p \right|^2 \right)^{1/2} \right)^{2} \right]. \label{pr:vmpi}
\end{align}
We estimate the first and second terms on the right-hand side. Since $\left( \{\del_n\},\{M_n\} \right)$ is suppressive, it follows from \eqref{moJ}, \eqref{ieq:Jpq} and \eqref{ieq:Lplq2} that, for every $n \in \mathbb{N}$ and every $p,q \in B_1$,
\begin{equation}
\ep \left[ J(\sq_n,p,q)\right] \leq \ep \left[ J(\sq_0,p,q) \right] \leq \ep \left[ \frac{1}{2}\Lam(\sq_0)+ \frac{1}{2\lam(\sq_0)} + 1\right] \leq C. \label{ieq:ep0c}
\end{equation}  
From \eqref{ieq:v2}, \eqref{ieq:sup} and \eqref{ieq:ep0c}, we obtain, for every $n \in \mathbb{N}$ and every $p,q \in B_1$,
\begin{align}
&\ep \left[ \fint_{\sq_{n}} | \nab v(\cdot,\sq_{n},p,q) |^2 \right] \\
\leq&\, \ep \left[ \frac{2}{\lam(\sq_{n})} J(\sq_n,p,q):\lam(\sq_n)\leq \del_{n} \right] +\ep \left[ \frac{2}{\lam(\sq_{n})} J(\sq_n,p,q):\lam(\sq_n)> \del_{n} \right] \\
\leq&\,C\,\ep \left[ \frac{\Lam(\sq_{n})}{\lam(\sq_{n})}+\frac{1}{\lam(\sq_{n})^2}+\frac{1}{\lam(\sq_{n})}:\lam(\sq_{n})\leq\del_{n}\right]+\frac{C}{\del_n} \ep \left[ J(\sq_n,p,q) \right]  \\ 
\leq&\, C e^{-n}+\frac{C}{\del_n} \ep \left[ J(\sq_n,p,q) \right]. \label{ieq:nabv}
\end{align}
Combining \eqref{ieq:ep0c}, \eqref{ieq:nabv} and \eqref{ieq:ban3}, we obtain
\begin{align}
\ep \left[ \fint_{\sq_{n+1}} \left| \nab v(\cdot,\sq_{n+1},p,0) +p\right|^2 +\fint_{\sq_{n+1}} \left| \nab v(\cdot,\sq_{n+1},0,q) -\bar{\aa}_n^{-1}q\right|^2 \right] \leq&\,C e^{-n}+\frac{C}{\del_n}.  \label{pr:vmpi1}
\end{align}

\textit{Step 2.} In this step, we use the estimate of the variance of the spatial average of $\nab v(\cdot,\sq_n,p,q)$ to obtain the existence of $\ka'=\ka'(d,r)>0$ and $C=C(d,r,L)<\infty$ such that, for every $m \in \{ 1,2,\dots,n \}$ and every $p,q \in B_1$,
\begin{align}
3^{-(n+1-m)d}\sum_{y \in 3^m\mathbb{Z}^d\cap \sq_{n+1}} \ep &\left[ \left| \fint_{y+\sq_m} \nab v(\cdot,\sq_{n+1},p,q)-\bar{\aa}_n^{-1}q+p \right|^2 \right] \\
 &\leq\frac{C}{\del_{n+1}}\left( e^{- \ka' m} +\sum_{k=0}^{m-1} e^{\ka'(k+1-m)}\tau_k+\sum_{k=m}^{n} \tau_k \right).  \label{pr:flatst2}
\end{align}
By \eqref{epspv}, stationarity and Cauchy-Schwarz inequality, we have, for every $q \in B_1$ and $m$, $n\in\mathbb{N}$ with $m<n$,
\begin{align}
&\left| \bar{\aa}_n^{-1}q-\bar{\aa}_m^{-1}q \right|^2 \\
=& \left| \ep \left[ \fint_{\sq_n} \nab v(x,\sq_n,0,q)\,dx- \fint_{\sq_m}\nab v(x,\sq_m,0,q)\,dx\right]\right|^2 \\
=& \left| \ep \left[ 3^{-d(n-m)}\fint_{z+\sq_m} \nab v(x,\sq_n,0,q)\,dx-3^{-d(n-m)}\sum_{ z \in 3^m\mathbb{Z}^d\cap \sq_{n}} \fint_{z+\sq_n}\nab v(x,z+\sq_m,0,q)\,dx\right]\right|^2 \\
\leq&\, \ep \left[ 3^{-d(n-m)} \sum_{z \in 3^m\mathbb{Z}^d\cap \sq_{n}} \fint_{z+\sq_m} \left| \nab v(\cdot,\sq_n,0,q)-\nab v (\cdot,z+\sq_m,0,q) \right|^2 \right]. \label{anam-1}
\end{align}
From \eqref{def:lam} and \eqref{lem:qr2}, we have 
\begin{align}
&\,\ep \left[ 3^{-d(n-m)} \sum_{z \in 3^m\mathbb{Z}^d\cap \sq_{n}} \fint_{z+\sq_m} \left| \nab v(\cdot,\sq_n,p,q)-\nab v(\cdot,z+\sq_m,p,q) \right|^2 \right]\\
\leq  & \,\ep \left[ \frac{1}{\lam(\sq_n)} 3^{-d(n-m)}  \sum_{z \in 3^m\mathbb{Z}^d\cap \sq_{n}} \fint_{z+\sq_m}  \left|\aa^{\frac{1}{2}}\left(\nab v(\cdot,\sq_n,p,q)-\nab v(\cdot,z+\sq_m,p,q) \right) \right|^2 \right] \\
= & \, \ep \left[ \frac{2}{\lam(\sq_n)}  3^{-d(n-m)}\sum_{z \in 3^m\mathbb{Z}^d\cap \sq_{n}} \left( J(z+\sq_m,p,q)-J(\sq_n,p,q)\right) \right]. \label{ieq:parnabv}
\end{align}
From \eqref{anam-1} and \eqref{ieq:parnabv}, it follows that
\begin{equation}
\left| \bar{\aa}_n^{-1}q-\bar{\aa}_m^{-1}q \right|^2 \leq  \ep \left[ \frac{2}{\lam(\sq_n)}3^{-d(n-m)} \sum_{z \in 3^m\mathbb{Z}^d\cap \sq_{n}} \left( J(z+\sq_m,0,q)-J(\sq_n,0,q)\right) \right]. \label{ieq:agag1}
\end{equation}
By stationarity, it follows that
\begin{align}
&\ep \left[ \frac{2}{\lam(\sq_n)} 3^{-d(n-m)} \sum_{z \in 3^m\mathbb{Z}^d\cap \sq_{n}} \left( J(z+\sq_m,p,q)-J(\sq_n,p,q) \right) : \lam(\sq_n)>\del_n \right] \\
\leq&\frac{2}{\del_{n}} \ep \left[ 3^{-d(n-m)}\sum_{z \in 3^m\mathbb{Z}^d\cap \sq_{n}} \left(J \left(z+\sq_m,p,q\right)-J\left(\sq_{n},p,q\right) \right)\right]\leq \frac{C}{\del_{n}}\sum_{k=m}^{n-1}\tau_k. \label{ieq:agag2}
\end{align}
Since $\left( \{\del_n\},\{M_n\} \right)$ is suppressive, we see from \eqref{ieq:Jpq} and \eqref{ieq:sup} that
\begin{align}
&\ep \left[ \frac{2}{\lam(\sq_{n})}3^{-d(n-m)}\sum_{z \in 3^m\mathbb{Z}^d\cap \sq_{n}} \left(J \left(z+\sq_m,p,q\right)-J\left(\sq_{n},p,q\right) \right) :\lam(\sq_{n})\leq\del_{n}\right] \\
\leq&\,C\,\ep \left[ \frac{\Lam(\sq_{n})}{\lam(\sq_{n})}+\frac{1}{\lam(\sq_{n})^2}+\frac{1}{\lam(\sq_{n})}:\lam(\sq_{n})\leq\del_{n}\right]\leq Ce^{-n}. \label{ieq:agag3}
\end{align}
\eqref{ieq:parnabv}, \eqref{ieq:agag1}, \eqref{ieq:agag2} and \eqref{ieq:agag3} lead to
\begin{equation}
\ep \left[ 3^{-d(n-m)} \sum_{z \in 3^m\mathbb{Z}^d\cap \sq_{n}} \fint_{z+\sq_m} \left| \nab v(\cdot,\sq_{n},p,q)-\nab v(\cdot,z+\sq_m,p,q) \right|^2 \right] \leq Ce^{-n}+\frac{C}{\del_{n}}\sum_{k=m}^{n-1}\tau_k \label{ieq:epsc}
\end{equation}
and
\begin{equation}
\left| \bar{\aa}_n^{-1}q-\bar{\aa}_m^{-1}q \right|^2 \leq Ce^{-n}+\frac{C}{\del_{n}}\sum_{k=m}^{n-1}\tau_k. \label{ipq:acauchy}
\end{equation}
From the previous two inequalities, Lemma \ref{lemlem:varsa} and \eqref{epspv}, we deduce that there exist constants $\ka=\ka(d,r) \in (0, 1)$ and $C=C(d,r,L)>0$ such that
\begin{align}
&3^{-(n+1-m)d}\sum_{y \in 3^m\mathbb{Z}^d\cap \sq_{n+1}} \ep \left[ \left| \fint_{y+\sq_m} \nab v(\cdot,\sq_{n+1},p,q)-\bar{\aa}_n^{-1}q+p \right|^2 \right] \\
\leq&\,3\cdot3^{-(n+1-m)d}\sum_{y \in 3^m\mathbb{Z}^d \cap \sq_{n+1}} \ep \left[ \left| \fint_{y+\sq_m} \nab v(\cdot,\sq_{n+1},p,q)-\nab v (\cdot,y+\sq_m,p,q) \right|^2 \right] \\
&+3\cdot3^{-(n+1-m)d}\sum_{y \in 3^m\mathbb{Z}^d\cap \sq_{n+1}} \ep \left[ \left| \fint_{y+\sq_m} \nab v(\cdot,y+\sq_{m},p,q)-\bar{\aa}_m^{-1}q+p \right|^2 \right] \\
&+3 \left|\bar{\aa}_m^{-1}q-\bar{\aa}_n^{-1}q \right|^2 \\
\leq&\frac{C}{\del_{n+1}}\left(e^{- \ka m} +\sum_{k=0}^{m-1} e^{\ka(k+1-m)}\tau_k+\sum_{k=m}^{n} \tau_k \right).
\end{align}
Hence, \eqref{pr:flatst2} is proved.

\textit{Step 3.} For the sake of simplicity, we write
\begin{equation}
X_m:=3^{-(n+1-m)d} \sum_{y \in 3^m\mathbb{Z}^d\cap \sq_{n+1}}  \left| \fint_{y+\sq_m} \nab v(\cdot,\sq_{n+1},p,q)-\bar{\aa}_n^{-1}q+p \right|^2 .
\end{equation}
Using H\"{o}lder's inequality, we see that
\begin{equation}
\left(\sum_{m=0}^{n} 3^m X_m^{1/2} \right)^2 \leq \left( \sum_{m=0}^{n}3^m \right) \left( \sum_{m=0}^{n}3^mX_m \right) \leq \,C\,3^n \sum_{m=0}^{n} 3^mX_m.
\end{equation}
From \eqref{pr:flatst2} and $\ka \in (0,1)$, we obtain
\begin{align}
\ep \left[ \left(\sum_{m=0}^{n} 3^m X_m^{1/2} \right)^2 \right] \leq&\frac{C}{\del_{n+1}}3^n\sum_{m=0}^{n}3^m \left( e^{- \ka m} +\sum_{k=0}^{m-1} e^{\ka(k+1-m)}\tau_k+\sum_{k=m}^{n} \tau_k \right) \\
\leq&\frac{C}{\del_{n+1}}3^{2n}\left( e^{-\ka n} +\sum_{k=0}^{n-1} e^{-\ka(n-k)}\tau_k + \sum_{k=0}^{n} 3^{-(n-k)} \tau_k \right) \\
\leq&\frac{C}{\del_{n+1}}3^{2n}\left( e^{-\ka n} +\sum_{k=0}^{n} e^{-\ka(n-k)}\tau_k \right). \label{ieq:ahah1}
\end{align}
From \eqref{ieq:ahah1}, \eqref{pr:vmpi} and \eqref{pr:vmpi1}, we have \eqref{lem:flatv}.
\end{proof}

Combining the previous estimate of the flatness of $v(\cdot, \sq_{n+1},p,q)$ and the Caccioppoli inequality, we now control $J(\sq_n,p,\bar{\aa}_np)$ by $\tau_0,\tau_1,\dots,\tau_n$.

\begin{lem}\label{lemlem:epJn}
Let $\left( \{\del_n\},\{M_n\} \right)$ be suppressive. Then, there exist $\ka=\ka(d,r)>0$ and $C=C(d,r,L)<\infty$ such that, for every $n \in \mathbb{N}$ and $p\in B_1$,
\begin{equation}
\ep \left[ J(\sq_n,p,\bar{\aa}_np) \right] \leq C\, \frac{M_{n+1}^3}{\del_{n+1}^3}\left( e^{-\ka n} + \sum_{k=0}^n 3^{-\ka(n-k)} \tau_m \right). \label{lem:epJn}
\end{equation}
\end{lem}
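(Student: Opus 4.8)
\emph{Plan.} The idea is to compare $\sq_n$ with its parent $\sq_{n+1}$, to isolate the event on which the coefficients are well behaved at scale $n{+}1$, and on that event to run a Caccioppoli–plus–flatness argument. Fix $p\in B_1$ and set $q:=\bar\aa_np$, so that $|q|\le C$ by \eqref{ieq:ban3} and, crucially, the affine profile $(\bar\aa_n^{-1}q-p)\cdot x$ appearing in Lemma \ref{lemlem:flatv} vanishes identically. Put $G:=\{\lam(\sq_{n+1})>\del_{n+1}\}\cap\{\Lam(\sq_{n+1})<M_{n+1}\}$. Off $G$, the crude bound \eqref{ieq:Jpq}, the monotonicities $\Lam(\sq_n)\le\Lam(\sq_{n+1})$ and $\lam(\sq_n)^{-1}\le\lam(\sq_{n+1})^{-1}$, and the suppressiveness bound \eqref{ieq:sup} at scale $n{+}1$ (with exponent pairs $(1,0),(0,1),(0,0)$, all of total degree $\le3$) give $\ep[J(\sq_n,p,q):G^{c}]\le Ce^{-n}$, which is absorbed into the $e^{-\ka n}$ term of \eqref{lem:epJn}. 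It remains to bound $\ep[J(\sq_n,p,q):G]$.

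On $G$ I would write $v:=v(\cdot,\sq_{n+1},p,q)$ and $v_0:=v(\cdot,\sq_n,p,q)$; both lie in $\mathcal A(\sq_n)$ (for $v$, by restriction), and $\sq_n$ is the central cube of the triadic partition of $\sq_{n+1}$. By \eqref{lem:Jq}, $J(\sq_n,p,q)=\fint_{\sq_n}\tfrac12\nab v_0\cdot\aa\nab v_0$; writing $\nab v_0=\nab v-(\nab v-\nab v_0)$ and using $\tfrac12 x\cdot\aa x\le y\cdot\aa y+z\cdot\aa z$ with $x=y-z$ gives
\[
J(\sq_n,p,q)\ \le\ \fint_{\sq_n}\nab v\cdot\aa\nab v\ +\ \fint_{\sq_n}(\nab v-\nab v_0)\cdot\aa(\nab v-\nab v_0).
\]
For the second term, retaining only the central cube in the quadratic response identity \eqref{lem:qr2} and using stationarity gives $\ep\bigl[\fint_{\sq_n}(\nab v-\nab v_0)\cdot\aa(\nab v-\nab v_0)\bigr]\le C\bigl(\ep[J(\sq_n,p,q)]-\ep[J(\sq_{n+1},p,q)]\bigr)\le C\tau_n$, the last step by \eqref{def:tau}, \eqref{lem:rq2}, \eqref{momu}, \eqref{momu*} and a rescaling (since $q$ need not lie in $B_1$, cf. \eqref{ieq:ban3}). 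For the first term, on $G$ we have $\nab v\cdot\aa\nab v\le\Lam(\sq_n)|\nab v|^2\le M_{n+1}|\nab v|^2$ pointwise, while the Caccioppoli inequality for the $\aa$-harmonic $v$ on $\sq_{n+1}\supset\sq_n$ (cutoff of gradient $\lesssim 3^{-n}$, subtracting $c=\fint_{\sq_{n+1}}v$) yields, on $G$,
\[
\fint_{\sq_n}|\nab v|^2\ \le\ \frac{C\,\Lam(\sq_{n+1})}{\lam(\sq_n)\,3^{2n}}\fint_{\sq_{n+1}}\Bigl|v-\fint_{\sq_{n+1}}v\Bigr|^2\ \le\ \frac{C\,M_{n+1}}{\del_{n+1}\,3^{2n}}\fint_{\sq_{n+1}}|v|^2 .
\]
Because the affine profile is $0$, Lemma \ref{lemlem:flatv} (applied to the $(p,0)$ and $(0,q)$ parts separately via linearity \eqref{vli}, rescaling the second by $|q|\le C$) gives $\ep\bigl[\fint_{\sq_{n+1}}|v|^2\bigr]\le \tfrac{C}{\del_{n+1}}3^{2n}\bigl(e^{-\ka n}+\sum_{k=0}^{n}e^{-\ka(n-k)}\tau_k\bigr)$. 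Chaining the last three displays, the powers $3^{2n}$ cancel and
\[
\ep[J(\sq_n,p,q):G]\ \le\ \frac{C\,M_{n+1}^{2}}{\del_{n+1}^{2}}\Bigl(e^{-\ka n}+\sum_{k=0}^{n}e^{-\ka(n-k)}\tau_k\Bigr)+C\tau_n ;
\]
since $\tau_n$ is the $k=n$ summand and $M_{n+1}\ge1\ge\del_{n+1}$, adding the $G^{c}$-contribution and replacing $e^{-\ka(n-k)}$ by $3^{-\ka(n-k)}$ (after relabelling $\ka$) yields \eqref{lem:epJn}, in fact with a weaker power of $M_{n+1}/\del_{n+1}$ than stated, which suffices.

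The main obstacle is precisely this good-event estimate, and within it the tension between the factor $3^{2n}$ carried by Lemma \ref{lemlem:flatv} and the unit scale of $J$: this factor is defeated only because $v$ is compared to the \emph{correct} affine profile — which for the choice $q=\bar\aa_np$ is exactly $0$ — so that the compensating $3^{-2n}$ from Caccioppoli cancels it. The second delicate point is that one must restrict to $G$ \emph{before} invoking Caccioppoli, so that $\Lam(\sq_{n+1})$ and $\lam(\sq_n)^{-1}$ are deterministically at most $M_{n+1}$ and $\del_{n+1}^{-1}$; this is what produces the $M_{n+1}/\del_{n+1}$ blow-up in \eqref{lem:epJn}, while off $G$ one is forced back on the crude bound \eqref{ieq:Jpq}, whose contribution is then tamed by \eqref{ieq:sup}. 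The dependence $\ka=\ka(d,r)$, $C=C(d,r,L)$ is inherited from Lemma \ref{lemlem:flatv} together with \eqref{ieq:sup} and \eqref{ieq:ban3}.
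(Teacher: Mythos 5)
Your argument is correct, and its engine is the same as the paper's: split on the good event $G$ at scale $n+1$ (suppressiveness \eqref{ieq:sup} kills the complement), then on $G$ combine the Caccioppoli inequality for $v(\cdot,\sq_{n+1},p,\bar\aa_np)$ with the flatness estimate of Lemma \ref{lemlem:flatv}, whose affine profile vanishes precisely for $q=\bar\aa_np$ so that the $3^{2n}$ factors cancel; the handling of $|\bar\aa_np|\le C$ outside $B_1$ by quadratic rescaling is exactly what the paper does implicitly. Where you genuinely diverge is in passing from the energy of $v(\cdot,\sq_{n+1},p,q)$ on $\sq_n$ back to $J(\sq_n,p,q)$: the paper (Steps 2--3) uses the exact identity $J(\sq_n,p,q)=\fint_{\sq_n}\tfrac12\nab w\cdot\aa\nab w-\fint_{\sq_n}\tfrac12(\nab v-\nab w)\cdot\aa(\nab v+\nab w)$, estimates the cross term by H\"older, \eqref{ieq:epsc} and \eqref{ieq:nabv}, and then must absorb a term $\tfrac{M_{n+1}}{\del_{n+1}}\tau_n^{1/2}\ep[J]^{1/2}$ by Young's inequality; you instead use the elementary bound $\tfrac12\nab v_0\cdot\aa\nab v_0\le\nab v\cdot\aa\nab v+(\nab v-\nab v_0)\cdot\aa(\nab v-\nab v_0)$ and control the correction term directly by the quadratic response identity \eqref{lem:qr2} (keeping only the central cube) plus stationarity, giving $C\tau_n$ with no self-absorption. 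This is cleaner and, as you note, even yields the power $M_{n+1}^2/\del_{n+1}^2$, which implies the stated $M_{n+1}^3/\del_{n+1}^3$ since $M_{n+1}/\del_{n+1}\ge1$. Two small bookkeeping remarks: the paper's Lemma \ref{lem:cacci} as stated carries the factor $(\Lam/\lam)^2$ rather than your sharper $\Lam(\sq_{n+1})/\lam(\sq_n)$ (your version is provable via an $\aa$-weighted Cauchy--Schwarz, but citing the appendix lemma verbatim simply restores the stated power $3$), and restricting to the event at scale $n+1$ before invoking Caccioppoli, as you do, is in fact tidier than the paper's mixing of scale-$n$ events with scale-$(n+1)$ constants.
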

\begin{proof}
Fix $p \in B_1$. Choose $\ka \in (0,\frac{1}{2})$ so that Lemma \ref{lemlem:flatv} holds.

\textit{Step 1.} In this step, we will show that
\begin{equation}
\ep \left[ \fint_{\sq_n}| \nab v(\cdot,\sq_{n+1},p,\bar{\aa}_np) |^2 \right] \leq C\frac{M_{n+1}^2}{\del_{n+1}^3} \left( e^{-\ka n} +\sum_{k=0}^{n} e^{-\ka(n-k)}\tau_k \right). \label{pr:jstep1}
\end{equation}
Notice that $|\bar{\aa}_np| \leq C=C(L)$ holds by \eqref{ieq:ban3}. From \eqref{ieq:v2}, we have 
\begin{equation}
\fint_{\sq_n} | \nab v(\cdot,\sq_{n+1},p,\bar{\aa}_np)|^2 \leq C \fint_{\sq_{n+1}} | \nab v(\cdot,\sq_{n+1},p,\bar{\aa}_np)|^2 \leq C \left( \frac{\Lam(\sq_{n+1})}{\lam(\sq_{n+1})}+\frac{1}{\lam(\sq_{n+1})^2}+\frac{1}{\lam(\sq_{n+1})} \right).
\end{equation}
From this and \eqref{ieq:sup}, we get 
\begin{equation}
\ep \left[ \fint_{\sq_n} | \nab v(\cdot,\sq_{n+1},p,\bar{\aa}_np)|^2 :\left\{ \lam\left(\sq_n\right)\leq \del_n\right\} \cup \left\{ \Lam\left(\sq_n\right)\geq M_n\right\}\right] \leq Ce^{-n}.
\end{equation}
Applying the Caccioppoli inequality (Lemma \ref{lem:cacci}), we find that
\begin{equation}
\ep \left[ \fint_{\sq_n} | \nab v(\cdot,\sq_{n+1},p,\bar{\aa}_np)|^2 :\lam\left(\sq_n\right)>\del_n , \Lam\left(\sq_n\right)<M_n\right] \leq \frac{C}{3^{2n}} \frac{M_{n+1}^2}{\del_{n+1}^2}\ep \left[ \fint_{\sq_{n+1}}|v(\cdot,\sq_{n+1},p,\bar{\aa}_np) |^2 \right].
\end{equation}
Since $|\bar{\aa}_np| \leq C$ holds, it follows from Lemma \ref{lemlem:flatv} that
\begin{equation}
\ep \left[ \fint_{\sq_{n+1}}|v(\cdot,\sq_{n+1},p,\bar{\aa}_np) |^2 \right] \leq \frac{C}{\del_{n+1}} 3^{2n} \left( e^{-\ka n} +\sum_{k=0}^{n} e^{-\ka(n-k)}\tau_k \right).
\end{equation}
Combining the above inequalities yield \eqref{pr:jstep1}.

\textit{Step 2.} Fix $p,q \in B_1$ and define $v:=v(\cdot,\sq_n,p,q)$ and $w:=v(\cdot,\sq_{n+1},p,q)$. We will prove that 
\begin{equation}
\ep \left[ J(\sq_n,p,q) \right] \leq C\,\ep \left[ \fint_{\sq_n} \nab w \cdot \aa \nab w \right] + C \,\frac{M_{n+1}}{\del_{n+1}} \left( e^{-\ka n}+ \tau_n^{1/2}\ep\left[ J(\sq_n,p,q) \right]^{1/2}\right). \label{pr:jstep2}
\end{equation}
Using \eqref{lem:Jq}, we find that 
\begin{equation}
J(\sq_n,p,q)=\fint_{\sq_n} \frac{1}{2} \nab v \cdot \aa \nab v=\fint_{\sq_n} \frac{1}{2} \nab w \cdot \aa \nab w - \fint_{\sq_n}\frac{1}{2} (\nab v-\nab w) \cdot \aa (\nab v+\nab w). \label{pr:nabnabJ}
\end{equation} 
By H\"{o}lder's inequality, we get
\begin{equation}
\left| \fint_{\sq_n} (\nab v -\nab w)\cdot \aa ( \nab v +\nab w) \right| \leq C \Lam(\sq_n) \| \nab v-\nab w\|_{\ll^2(\sq_n)}\left( \| \nab v \|_{\ll^2(\sq_n)}+\| \nab w \|_{\ll^2(\sq_n)}\right). \label{ieq:aiai1}
\end{equation}
From \eqref{ieq:v2}, we have 
\begin{equation}
\| \nab w \|_{\ll^2(\sq_n)} \leq C\| \nab w \|_{\ll^2(\sq_{n+1})}\leq C \left( \frac{\Lam(\sq_{n+1})}{\lam(\sq_{n+1})}+\frac{1}{\lam(\sq_{n+1})^2}+\frac{1}{\lam(\sq_{n+1})} \right)^{\frac{1}{2}}. \label{ieq:aiai2}
\end{equation}
The same estimate holds for $\| \nab v \|_{\ll^2(\sq_n)} $ and $\| \nab v-\nab w \|_{\ll^2(\sq_n)} $. From \eqref{ieq:aiai1}, \eqref{ieq:aiai2} and \eqref{ieq:sup}, we find that
\begin{align}
&\ep \left[ \left| \fint_{\sq_n} (\nab v -\nab w)\cdot \aa ( \nab v +\nab w) \right| \right] \\
\leq&\,C\,\ep \left[ \Lam(\sq_n) \| \nab v-\nab w\|_{\ll^2(\sq_n)}\left( \| \nab v \|_{\ll^2(\sq_n)}+\| \nab w \|_{\ll^2(\sq_n)}\right) ;\Lam(\sq_{n+1})<M_{n+1} \right] \\
&+C\,\ep \left[  \frac{\Lam(\sq_{n+1})^2}{\lam(\sq_{n+1})}+\frac{\Lam(\sq_{n+1})}{\lam(\sq_{n+1})^2}+\frac{\Lam(\sq_{n+1})}{\lam(\sq_{n+1})} :\Lam(\sq_{n+1})\geq M_{n+1} \right] \\
\leq&\,CM_{n+1}\ep \left[ \| \nab v- \nab w \|_{\ll^2(\sq_n)}^2 \right]^{\frac{1}{2}} \left( \ep \left[ \| \nab v \|_{\ll^2(\sq_n)}^2 \right]^{\frac{1}{2}}+\ep \left[ \|\nab w \|_{\ll^2(\sq_{n+1})}^2 \right]^{\frac{1}{2}} \right) +Ce^{-n}. \label{ieq:ajaj1}
\end{align}
By \eqref{ieq:nabv} and \eqref{moJ}, we have
\begin{equation}
\ep \left[ \| \nab v \|_{\ll^2(\sq_n)}^2 \right]^{\frac{1}{2}} \leq Ce^{-\frac{1}{2}n} +\frac{C}{\del_{n}^{1/2}}\ep \left[ J(\sq_n,p,q) \right]^{\frac{1}{2}} \leq Ce^{-\frac{1}{2}n} +\frac{C}{\del_{n+1}^{1/2}}\ep \left[ J(\sq_n,p,q) \right]^{\frac{1}{2}}  , \label{ieq:ajaj2}
\end{equation}
and
\begin{equation}
\ep \left[ \|\nab w \|_{\ll^2(\sq_{n+1})}^2 \right]^{\frac{1}{2}} \leq Ce^{-\frac{1}{2}n} +\frac{C}{\del_{n+1}^{1/2}}\ep \left[ J(\sq_{n+1},p,q) \right]^{\frac{1}{2}} \leq Ce^{-\frac{1}{2}n} +\frac{C}{\del_{n+1}^{1/2}}\ep \left[ J(\sq_n,p,q) \right]^{\frac{1}{2}}. \label{ieq:ajaj3}
\end{equation}
Applying \eqref{ieq:epsc}, we obtain
\begin{align}
&\ep \left[ \| \nab v-\nab w \|_{\ll^2(\sq_n)}^2 \right] \\
\leq &\,C\, \ep \left[ 3^{-d} \sum_{z \in 3^n\mathbb{Z}^d\cap \sq_{n+1}} \fint_{z+\sq_n} \left| \nab v(\cdot,z+\sq_n,p,q)-\nab v(\cdot,\sq_{n+1},p,q) \right|^2 \right] \\
\leq &\,Ce^{-n} +\frac{C}{\del_{n+1}}\tau_{n}. \label{ieq:ajaj4}
\end{align}
From \eqref{ieq:ep0c}, we have
\begin{equation}
\tau_n \leq \sup_{p,q \in B_1} \ep [ J(\sq_n,p,q) ] \leq C. \label{ieq:ajaj5}
\end{equation}
Combining the previous five inequalities, we obtain
\begin{align}
&\ep \left[ \left| \fint_{\sq_n} (\nab v -\nab w)\cdot \aa ( \nab v +\nab w) \right| \right] \\
\leq&\,CM_{n+1} \left( e^{-\frac{1}{2}n}+\frac{C}{\del_{n+1}^{1/2}}\tau_{n}^{1/2}\right) \left( e^{-\frac{1}{2}n}+\frac{C}{\del_{n+1}^{1/2}} \ep \left[ J(\sq_n,p,q) \right]^{\frac{1}{2}} \right)+Ce^{-n} \\
\leq&\, CM_{n+1}  \left( e^{-n}+\frac{C}{\del_{n+1}^{1/2}}e^{-\frac{1}{2}n} +\frac{C}{\del_{n+1}}\tau_{n}^{1/2}\ep \left[ J(\sq_n,p,q) \right]^{1/2}\right) +Ce^{-n} \\
\leq&\, C \,\frac{M_{n+1}}{\del_{n+1}} \left( e^{-\frac{1}{2} n}+ \tau_n^{1/2}\ep\left[ J(\sq_n,p,q) \right]^{1/2}\right). \label{ieq:ajaj6}
\end{align}
Thus, \eqref{ieq:ajaj6} and \eqref{pr:nabnabJ} lead to \eqref{pr:jstep2}.

\textit{Step 3.} In this step, we finish the proof. Since $|\bar{\aa}_np| \leq C$ holds, it follows from \eqref{pr:jstep1}, \eqref{lem:Jq}, \eqref{ieq:Jpq} and \eqref{ieq:sup} that
\begin{align}
&\ep \left[ \fint_{\sq_n} \nab v(\cdot,\sq_{n+1},p,\bar{\aa}_np) \cdot \aa \nab v(\cdot,\sq_{n+1},p,\bar{\aa}_np) \right] \\
\leq& \,\ep \left[ \fint_{\sq_{n}}  \nab v(\cdot,\sq_{n+1},p,\bar{\aa}_np)  \cdot \aa  \nab v(\cdot,\sq_{n+1},p,\bar{\aa}_np)  :\Lam(\sq_{n+1}) < M_{n+1}\right] \\
&+C\,\ep \left[ \fint_{\sq_{n+1}}  \nab v(\cdot,\sq_{n+1},p,\bar{\aa}_np)  \cdot \aa  \nab v(\cdot,\sq_{n+1},p,\bar{\aa}_np)  :\Lam(\sq_{n+1}) \geq M_{n+1}\right] \\
\leq&\,M_{n+1}\ep \left[ \fint_{\sq_{n}} | \nab v(\cdot,\sq_{n+1},p,\bar{\aa}_np)  |^2 \right]+C \,\ep \left[ \Lam(\sq_{n+1})+\frac{1}{\lam(\sq_{n+1})}+1:\Lam(\sq_{n+1}) \geq M_{n+1} \right] \\
\leq&\,C\,\frac{M_{n+1}^3}{\del_{n+1}^3} \left( e^{-\ka n} +\sum_{k=0}^{n} e^{-\ka(n-k)}\tau_k \right). 
\end{align}
Combining this inequality and \eqref{pr:jstep2} and using Young's inequality, we obtain
\begin{align}
\ep \left[ J(\sq_n,p,\bar{\aa}_np) \right] &\leq C \,\frac{M_{n+1}}{\del_{n+1}}\tau_n^{\frac{1}{2}}\ep\left[ J(\sq_n,p,\bar{\aa}_np) \right]^{\frac{1}{2}}+C\,\frac{M_{n+1}^3}{\del_{n+1}^3} \left( e^{-\ka n} +\sum_{k=0}^{n} e^{-\ka(n-k)}\tau_k \right) \\
&\leq \frac{1}{2} \ep \left[ J(\sq_n,p,\bar{\aa}_np) \right]+C\,\frac{M_{n+1}^2}{\del_{n+1}^2}\tau_n+C\,\frac{M_{n+1}^3}{\del_{n+1}^3} \left( e^{-\ka n} +\sum_{k=0}^{n} e^{-\ka(n-k)}\tau_k \right).
\end{align}
Arranging this implies \eqref{lem:epJn}. 
\end{proof}

\begin{proof}[Proof of Theorem \ref{thm:coa}]
We set the quantity
\begin{equation}
F_n:=\sum_{i=1}^{d} \ep \left[ J(\sq_n,e_i,\bar{\aa}_ne_i) \right].
\end{equation} 
We note that there exists a constant $C=C(d)< \infty$ such that, for every positive definite matrix $A \in \mathbb{R}^{d \times d}$,
\begin{equation}
\sup_{p \in B_1} p \cdot Ap \leq C\sum_{i=1}^d e_i \cdot Ae_i. \label{ieq:posequ}
\end{equation}
\textit{Step 1.} We first prove the estimate
\begin{equation}
F_n \leq C \exp\left(-cn^{1-3\alpha}\right). \label{pr:fst1}
\end{equation}
From \eqref{ieq:jban}, it follows that
\begin{equation}
F_{n+1} \leq \sum_{i=1}^d \ep \left[ J(\sq_{n+1},e_i,\bar{\aa}_{n}e_i) \right]. \label{ieq:fj}
\end{equation}
From \eqref{lem:Jqd}, \eqref{momu} and \eqref{momu*}, the mapping $p \mapsto \ep \left[ \mu(\sq_n,p)\right] - \ep \left[ \mu(\sq_{n+1},p)\right]$ and $p \mapsto \ep \left[ \mu_*(\sq_n,\bar{\aa}_{n}p)\right] - \ep \left[ \mu_*(\sq_{n+1},\bar{\aa}_{n}p)\right]$ are positive quadratic forms. By using \eqref{ieq:fj}, \eqref{def:J}, \eqref{ieq:posequ} and \eqref{ieq:ban3}, we have the existence of $c>0$ such that
\begin{align}
F_n-F_{n+1}&\geq \sum_{i=1}^{d} \left( \ep \left[ J(\sq_n,e_i,\bar{\aa}_ne_i) \right] -\ep \left[ J(\sq_{n+1},e_i,\bar{\aa}_ne_i) \right]  \right) \\
&=\sum_{i=1}^d \left( \ep \left[ \mu(\sq_n,e_i) \right]-\ep \left[ \mu(\sq_{n+1},e_i) \right] +\ep \left[ \mu_*(\sq_n,\bar{\aa}_n e_i) \right] -\ep \left[ \mu_*(\sq_{n+1},\bar{\aa}_n e_i) \right]\right) \\
&\geq c \left( \sup_{p \in B_1} \left( \ep \left[ \mu(\sq_n,p) \right]-\ep \left[ \mu(\sq_{n+1},p) \right]\right) +\sup_{q \in \bar{\aa}_n B_1} \left( \ep \left[ \mu_*(\sq_n,q) \right]-\ep \left[ \mu_*(\sq_{n+1},q) \right]\right)\right) \\
&\geq c \left( \sup_{p \in B_1} \left( \ep \left[ \mu(\sq_n,p) \right]-\ep \left[ \mu(\sq_{n+1},p) \right]\right) +\sup_{q \in B_1} \left( \ep \left[ \mu_*(\sq_n,q) \right]-\ep \left[ \mu_*(\sq_{n+1},q) \right]\right)\right) \\
&\geq c\tau_n . \label{ieq:ftau}
\end{align}
To control $F_n$, we will estimate the quantity
\begin{equation}
G_n:=e^{-\frac{\ka}{2}n} \sum_{m=0}^{n} e^{\frac{\ka}{2}m}F_m.
\end{equation}
Since $ \left( \{\del_n\}, \{M_n\} \right)$ is suppressive, it follows from \eqref{ieq:Jpq}, \eqref{ieq:ban3} and \eqref{ieq:Lplq2} that $F_0\leq C$.
From \eqref{ieq:ftau} and $F_0\leq C$, it follows that
\begin{equation}
G_n-G_{n+1}=e^{-\frac{\ka}{2}n}\sum_{m=0}^{n}e^{\frac{\ka}{2}m}(F_m-F_{m+1})-Ce^{-\frac{\ka}{2}n}F_0 \geq c\left( e^{-\frac{\ka}{2}n}\sum_{m=0}^{n}e^{\frac{\ka}{2}m}\tau_m-Ce^{-\frac{\ka}{2}n}\right).
\end{equation}
Lemma \ref{lemlem:epJn} and $\displaystyle\frac{M_n}{\del_n} \leq (n+1)^{\alpha}$ lead to
\begin{align}
G_n&\leq C(n+1)^{3\alpha}e^{-\frac{\ka}{2}n}\sum_{m=0}^{n}e^{\frac{\ka}{2}m} \left( e^{-\ka m}+ \sum_{k=0}^{m} e^{-\ka(m-k)}\tau_k \right) \\
&\leq C(n+1)^{3\alpha}\left( e^{-\frac{\ka}{2}n} +e^{-\frac{\ka}{2}n} \sum_{k=0}^n \sum_{m=k}^n e^{\frac{\ka}{2}(2k-m)} \tau_k \right) \\
&\leq C(n+1)^{3\alpha}\left( e^{-\frac{\ka}{2}n}+ e^{-\frac{\ka}{2}n}\sum_{k=0}^{n}e^{\frac{\ka}{2}k}\tau_{k}\right).
\end{align}
Using $\tau_n\geq0$ and comparing the previous two inequalities yield
\begin{align}
G_{n+1}\leq G_n+Ce^{-\frac{\ka}{2}n} \leq&\, C(n+1)^{3\alpha}\left( e^{-\frac{\ka}{2}n}+ e^{-\frac{\ka}{2}n}\sum_{k=0}^{n}e^{\frac{\ka}{2}k}\tau_{k}\right) \\
\leq&\, C(n+1)^{3\alpha}\left( \left(G_{n}-G_{n+1} \right)+e^{-\frac{\ka}{2}n}\right).
\end{align} 
Rearranging this shows that, for $\theta_n:=1-\displaystyle\frac{1}{C(n+1)^{3\alpha}}\;(n \in \mathbb{N})$,
\begin{equation}
G_{n+1} \leq \theta_nG_n+Ce^{-\frac{\ka}{2}n}. \label{ieq:gr}
\end{equation}
We make $C$ in the definition of $\theta_n$ large enough so that $e^{-\ka/2}\leq \theta_0 \leq \theta_n$ holds. An iteration of \eqref{ieq:gr} gives
\begin{equation}
G_n \leq \left( \prod_{k=0}^{n-1} \theta_k \right)G_0+C\sum_{k=1}^{n} \left\{ \left(\prod_{l=n-k+1}^{n-1} \theta_l \right)e^{-\frac{\ka}{2}(n-k)}\right\}. \label{ieq:gi}
\end{equation}
From $e^{-\ka/2}\leq \theta_l$ and $1-x \leq e^{-x}$, it follows that
\begin{equation}
\left(\prod_{l=n-k+1}^{n-1} \theta_l \right)e^{-\frac{\ka}{2}(n-k)} \leq \prod_{k=0}^{n-1} \theta_k \leq  \exp\left( -\sum_{k=1}^{n-1}\frac{1}{Ck^{3\alpha}+1}\right) \leq \exp\left( -c n^{1-3\alpha}\right),
\end{equation}
and therefore from \eqref{ieq:gi} and $G_0 \leq F_0 \leq C$ that
\begin{equation}
G_n\leq C(n+1)\exp\left( -c n^{1-3\alpha}\right).
\end{equation}
By appropriately reducing the value of $c$, we conclude that
\begin{equation}
G_n\leq  C \exp \left( -c n^{1-3\alpha}\right).
\end{equation}
Hence, from $F_n\leq G_n$, \eqref{pr:fst1} follows.

\textit{Step 2.} To complete the proof of the theorem, it remains to get the estimate that replaces $F_n$ in \eqref{pr:fst1} with $\left| \aa(\sq_n)-\bar{\aa}\right|$. To do so, we show that $\bar{\aa}_n$ converges to $\bar{\aa}$ by using the estimate obtained in the previous step. From \eqref{ipq:acauchy}, \eqref{ieq:ftau} and \eqref{pr:fst1}, it follows that, for every $p \in B_1$ and $n \in \mathbb{N}$,
\begin{align}
\left| \bar{\aa}_{n+1}^{-1}p- \bar{\aa}_n^{-1}p\right|^2 \leq Ce^{-n} +C\frac{\tau_n}{\del_{n+1}} \leq &\,Ce^{-n}  +C(n+1)^{\alpha}(F_n-F_{n+1}) \\
\leq& \,Ce^{-n}+Cn^{\alpha} \exp \left( -cn^{1-3\alpha} \right).
\end{align}
Thus, by appropriately reducing the value of $c$, we obtain
\begin{equation}
\left| \bar{\aa}_{n+1}^{-1}- \bar{\aa}_n^{-1}\right| \leq C n^{-3\alpha}\exp \left( -c n^{1-3\alpha}\right). \label{ststst1}
\end{equation}
We note that
\begin{equation}
\sum_{k=n+1}^{\infty}k^{-3\alpha}\exp \left( -ck^{1-3\alpha}\right) \leq \int_{n}^{\infty} x^{-3\alpha} \exp \left( -cx^{1-3\alpha}\right)dx =C\exp \left( -cn^{1-3\alpha}\right). \label{ststst2}
\end{equation}
Combining \eqref{ststst1} and \eqref{ststst2}, we obtain, for every $n,m \in \mathbb{N}$ with $n<m$,
\begin{equation}
\left| \bar{\aa}_{m}^{-1}- \bar{\aa}_n^{-1}\right| \leq \sum_{k=n}^{\infty}  \left| \bar{\aa}_{k+1}^{-1}- \bar{\aa}_k^{-1}\right| \leq \sum_{k=n}^{\infty} k^{-3\alpha}\exp \left( -ck^{1-3\alpha}\right) \leq C\exp \left( -cn^{1-3\alpha}\right).   \\
\end{equation}
Since $|\bar{\aa}_n|,|\bar{\aa}_m|\leq C$, we have, for every $n,m \in \mathbb{N}$ with $n<m$,
\begin{equation}
\left| \bar{\aa}_n- \bar{\aa}_m\right|= \left| \bar{\aa}_n\left(\bar{\aa}_m^{-1}- \bar{\aa}_n^{-1}\right)\bar{\aa}_m\right| \leq C \exp \left( -c n^{1-3\alpha}\right).
\end{equation}
Thus $\{\bar{\aa}_n\}$ is a Cauchy sequence and there exists a symmetric matrix $\widetilde{\aa} \in \mathbb{R}^{d \times d}$ such that
\begin{equation}
\left| \bar{\aa}_n- \widetilde{\aa}\right| \leq C \exp \left( -c n^{1-3\alpha}\right).
\end{equation}
The previous inequality, \eqref{ieq:jban} and \eqref{pr:fst1} lead to 
\begin{equation}
\ep \left[ J(\sq_n,e_i,\widetilde{\aa}e_i) \right] \leq \ep \left[ J(\sq_n,e_i,\bar{\aa}_ne_i) \right]+C\left| \bar{\aa}_n- \widetilde{\aa}\right|^2 \leq C \exp \left( -c n^{1-3\alpha}\right).
\end{equation}
From \eqref{ieJ}, \eqref{lem:rq2} and \eqref{ieq:posequ}, we have
\begin{equation}
\ep \left[ \sup_{p \in B_1} J(\sq_n,p,\widetilde{\aa}p) \right] \leq C \,\ep  \left[ \sum_{i=1}^d J(\sq_n,e_i,\widetilde{\aa}e_i) \right] \leq  C \exp \left( -c n^{1-3\alpha}\right). \label{ieq:Jtila}
\end{equation}
Using Lemma \ref{lemlem:|J}, we obtain that
\begin{equation}
\ep \left[ |\aa(\sq_n)-\widetilde{\aa}|^2 \right] \leq  C \,\ep \left[ \Lam(\sq_n) \sup_{p \in B_1} J(\sq_n,p,\bar{\aa}p) \right].
\end{equation}
Since $\{\bar{\aa}_n\}$ converges to $\widetilde{\aa}$, it follows from \eqref{ieq:ban3} that $|\widetilde{\aa}| \leq C$. From this, \eqref{ieq:Jpq} and \eqref{ieq:Lplq}, it follows that
\begin{align}
&\ep \left[ \Lam(\sq_n) \sup_{p \in B_1} J(\sq_n,p,\widetilde{\aa}p) : \Lam(\sq_n) \geq M_{n} \right] \\
\leq & \, C \, \ep \left[ \Lam(\sq_n)^2+\frac{\Lam(\sq_n)}{\lam(\sq_{n})}+\Lam(\sq_n) : \Lam(\sq_n) \geq M_{n}  \right] \leq C e^{-n}.
\end{align}
Since $M_n \leq (n+1)^\alpha$, \eqref{ieq:Jtila} gives
\begin{align}
&\ep \left[ \Lam(\sq_n) \sup_{p \in B_1} J(\sq_n,p,\widetilde{\aa}p) : \Lam(\sq_n) < M_{n} \right] \\
\leq &\, M_{n} \, \ep \left[ \sup_{p \in B_1} J(\sq_n,p,\widetilde{\aa}p) \right] \leq (n+1)^{\alpha} \exp \left( -c n^{1-3\alpha}\right) \leq C \exp \left( -c n^{1-3\alpha}\right).
\end{align}
Combining the previous three inequalities, we obtain
\begin{equation}
\ep \left[ |\aa(\sq_n)-\widetilde{\aa}|^2 \right] \leq C \exp \left( -c n^{1-3\alpha}\right). \label{conaa2}
\end{equation}
Since $\widetilde{\aa}-\aa(\sq_n)$ is symmetric, by using Jensen's inequality, we have
\begin{align}
\ep \left[ \left| \widetilde{\aa}-\aa(\sq_n)\right|^2 \right] &\geq \ep\left[ \sup_{p \in B_1} \left| p \cdot \left(\widetilde{\aa}-\aa(\sq_n)\right)p\right|^2 \right] \\
&\geq  \sup_{p \in B_1} \ep \left[ \left| p \cdot \left(\widetilde{\aa}-\aa(\sq_n)\right)p\right|^2 \right]\geq \sup_{p \in B_1} \left|  p\cdot \widetilde{\aa}p -\ep \left[ p \cdot \aa(\sq_n)p\right]\right|^2 .
\end{align}
The previous two inequalities and \eqref{def:mu2} show that $2\ep \left[\mu(\sq_n,p) \right]$ converge to $p \cdot \widetilde{\aa}p$ as $n \to \infty$. On the other hand, since $2\ep \left[\mu(\sq_n,p) \right]$ converge to $p \cdot \bar{\aa}p$ as $n \to \infty$ from the definition of $\bar{\aa}$, it follows that $p \cdot \widetilde{\aa}p=p \cdot \bar{\aa}p$ for every $p \in B_1$ and thus $\widetilde{\aa}=\bar{\aa}$. Therefore \eqref{conaa2} leads to \eqref{thm:coaa}.
\end{proof}

\section{Homogenization of the Dirichlet Problem}

\subsection{Weak Convergence of Gradients and Fluxes}
For each $n \in \mathbb{N}$, we define a random variable
\begin{equation}
\Omega(n):= \left( \sum_{m=0}^{n} 3^{-(n-m)} \left( 3^{-(n-m)d} \sum_{z \in  3^m\mathbb{Z}^d\cap \sq_{n}} | \aa(z+\sq_m)-\bar{\aa}|\right)^{\frac{1}{2}}\right)^2.
\end{equation}

We now use the multiscale Poincar\'{e} inequality to see weak convergence of the flux and gradient of $v(\cdot,\sq_n,p)$ by $\Om(n)$.
\begin{prop} \label{propprop:ieqfl}
There exists $C=C(d)< \infty$ such that, for every $n \in \mathbb{N}$ and $p \in B_1$,
\begin{equation}
\left\| \nab v (\cdot,\sq_n,p) -p\right\|_{\hh^{-1}(\sq_n)}^2 \leq C\,\frac{\Lam(\sq_n)}{\lam(\sq_n)}+C3^{2n}\frac{1}{\lam(\sq_n)}\Om(n) \label{prop:ieqfl}
\end{equation}
and
\begin{equation}
\left\| \aa\nab v (\cdot,\sq_n,p) -\bar{\aa}p\right\|_{\hh^{-1}(\sq_n)}^2 \leq C\left( \frac{\Lam(\sq_n)^3}{\lam(\sq_n)}+|\bar{\aa}|^2\right)+C3^{2n}\left( \frac{\Lam(\sq_n)^2}{\lam(\sq_n)}+|\bar{\aa}|\right)\Om(n). \label{prop:ieqde}
\end{equation}
\end{prop}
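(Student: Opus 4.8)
\emph{Plan.} Both estimates come from the multiscale Poincar\'{e} inequality (Lemma~\ref{lemlem:mpi}) applied to the vector fields $\nab v(\cdot,\sq_n,p)-p$ and $\aa\,\nab v(\cdot,\sq_n,p)-\bar\aa p$, once the averages of these fields over all triadic subcubes of $\sq_n$ have been controlled pathwise by $|\aa(z+\sq_m)-\bar\aa|$. Write $v:=v(\cdot,\sq_n,p)=-v(\cdot,\sq_n,p,0)$; then $v-l_p\in H_0^1(\sq_n)$ gives $\fint_{\sq_n}\nab v=p$, \eqref{lem:rq3} gives $\fint_{\sq_n}\aa\nab v=\aa(\sq_n)p$, and \eqref{ieq:v2} gives $\fint_{\sq_n}|\nab v|^2\le \Lam(\sq_n)\lam(\sq_n)^{-1}|p|^2$. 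The multiscale Poincar\'{e} inequality contributes to the right-hand sides the ``base'' terms $C\|\nab v-p\|_{\ll^2(\sq_n)}^2$ and $C\|\aa\nab v-\bar\aa p\|_{\ll^2(\sq_n)}^2$; by the energy bound above together with $|p|\le1$ and $\lam(\sq_n)\le\Lam(\sq_n)$, these are $\le C\Lam(\sq_n)\lam(\sq_n)^{-1}$ and $\le C(\Lam(\sq_n)^2\fint_{\sq_n}|\nab v|^2+|\bar\aa|^2)\le C(\Lam(\sq_n)^3\lam(\sq_n)^{-1}+|\bar\aa|^2)$, i.e.\ precisely the first terms in \eqref{prop:ieqfl} and \eqref{prop:ieqde}.

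\emph{The subcube averages (the main point).} Fix a triadic subcube $Q=z+\sq_m\subseteq\sq_n$ and set $\bar p_Q:=\fint_Q\nab v$. Since $v|_Q\in\mathcal{A}(Q)$, I would compare it with the local minimizer $v(\cdot,Q,\bar p_Q)$, which satisfies $\fint_Q\nab v(\cdot,Q,\bar p_Q)=\bar p_Q$ and $\fint_Q\aa\nab v(\cdot,Q,\bar p_Q)=\aa(Q)\bar p_Q$ by \eqref{lem:rq3}, and estimate the local flux defect $|\fint_Q\aa\nab v-\aa(Q)\bar p_Q|$ using the variational identity \eqref{lem:fv} and the energy estimate \eqref{ieq:v2} (if necessary, passing from $L^2$-control of $v$ to $L^2$-control of $\nab v$ on slightly smaller cubes via the Caccioppoli inequality, Lemma~\ref{lem:cacci}). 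Writing $\fint_Q\aa\nab v-\bar\aa p=(\fint_Q\aa\nab v-\aa(Q)\bar p_Q)+(\aa(Q)-\bar\aa)\bar p_Q+\bar\aa(\bar p_Q-p)$ and telescoping $\bar p_Q-p=\fint_Q\nab v-\fint_{\sq_n}\nab v$ along the chain of triadic ancestors of $Q$, I expect to obtain, after summing over $z$ and using the monotonicity $\lam(\sq_n)\le\lam(Q)\le\Lam(Q)\le\Lam(\sq_n)$, the bound $|\aa(Q)|\le\Lam(\sq_n)$ from \eqref{lem:rq1}, and $|\aa(Q)-\bar\aa|^2\le(\Lam(\sq_n)+|\bar\aa|)|\aa(Q)-\bar\aa|$,
\begin{align}
\sum_{z\in 3^m\mathbb{Z}^d\cap\sq_n}\Big|\fint_{z+\sq_m}\nab v-p\Big|^2&\le \frac{C}{\lam(\sq_n)}\sum_{z\in 3^m\mathbb{Z}^d\cap\sq_n}\big|\aa(z+\sq_m)-\bar\aa\big|,\\
\sum_{z\in 3^m\mathbb{Z}^d\cap\sq_n}\Big|\fint_{z+\sq_m}\aa\nab v-\bar\aa p\Big|^2&\le C\Big(\frac{\Lam(\sq_n)^2}{\lam(\sq_n)}+|\bar\aa|\Big)\sum_{z\in 3^m\mathbb{Z}^d\cap\sq_n}\big|\aa(z+\sq_m)-\bar\aa\big|,
\end{align}
where $|\bar\aa|$ is a finite, path-independent constant (Proposition~\ref{propprop:baa}).

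\emph{Assembling, and the obstacle.} Inserting the two displays above into the multiscale sum $\big(\sum_{m=0}^{n}3^m\big(3^{-(n-m)d}\sum_{z}|\fint_{z+\sq_m}f|^2\big)^{1/2}\big)^2$ produced by Lemma~\ref{lemlem:mpi}, and observing that $3^{2n}\Om(n)=\big(\sum_{m=0}^{n}3^m\big(3^{-(n-m)d}\sum_{z}|\aa(z+\sq_m)-\bar\aa|\big)^{1/2}\big)^2$, the multiscale contributions become $C\lam(\sq_n)^{-1}3^{2n}\Om(n)$ and $C(\Lam(\sq_n)^2\lam(\sq_n)^{-1}+|\bar\aa|)\,3^{2n}\Om(n)$; adding the base terms gives \eqref{prop:ieqfl} and \eqref{prop:ieqde}. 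The main obstacle is the subcube step: extracting the pathwise bounds in terms of $|\aa(Q)-\bar\aa|$ alone, which requires choosing the right comparison functions, telescoping the scale-by-scale corrections without accumulating an uncontrolled global energy defect, and checking that every error term is either of the claimed form or absorbable via $\lam(\sq_n)\le\lam(Q)\le\Lam(Q)\le\Lam(\sq_n)$ and $|\aa(Q)|\le\Lam(\sq_n)$; once that is in place, the remaining steps are routine applications of Lemma~\ref{lemlem:mpi}.
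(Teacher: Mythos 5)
Your overall architecture (multiscale Poincar\'{e} inequality, the base terms $\|\nab v-p\|_{\ll^2}^2$ and $\|\aa\nab v-\bar\aa p\|_{\ll^2}^2$ estimated via \eqref{ieq:v2}, and the final assembling into $3^{2n}\Om(n)$) coincides with the paper's, but the step you yourself flag as ``the main obstacle'' --- the pathwise control of the subcube averages --- is exactly the substance of the proof, and it is not carried out. Your sketch (compare $v|_Q$ with $v(\cdot,Q,\bar p_Q)$, estimate the flux defect through the first variation, then telescope $\bar p_Q-p$ along the triadic ancestors of $Q$) is a genuinely harder route: $v|_Q$ is an element of $\mathcal{A}(Q)$ with non-affine boundary data, so there is no clean energy identity comparing it with $v(\cdot,Q,\bar p_Q)$, and the quantity $\bigl|\fint_Q\aa\nab v-\aa(Q)\fint_Q\nab v\bigr|$ is not controlled by $|\aa(Q)-\bar\aa|$ alone; moreover the telescoping over ancestors introduces a second sum over scales whose errors you would still have to organize. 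In addition, your two claimed per-scale displays are not correct as stated: the natural bound at scale $m$ is through the subadditivity defect $3^{-(n-m)d}\sum_z\bigl(\mu(z+\sq_m,p)-\mu(\sq_n,p)\bigr)$, which by \eqref{def:mu2} is controlled by $3^{-(n-m)d}\sum_z|\aa(z+\sq_m)-\aa(\sq_n)|$ and hence produces an additional term $|\aa(\sq_n)-\bar\aa|$ after the triangle inequality; this term cannot in general be absorbed into $\sum_z|\aa(z+\sq_m)-\bar\aa|$ at a single scale $m<n$ (subadditivity only bounds $\aa(\sq_n)$ from above by the subcube averages), and it must instead be absorbed into $\Om(n)$, whose definition includes the scale $m=n$.

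The paper's device, which closes this gap with no telescoping and no Caccioppoli, is to compare $v$ on each $z+\sq_m$ with the \emph{same-slope} local minimizer: define $v_m\in l_p+H_0^1(\sq_n)$ by $v_m:=v(\cdot,z+\sq_m,p)$ on $z+\sq_m$. Then $\fint_{z+\sq_m}\nab v_m=p$ exactly and, by \eqref{lem:rq3}, $\fint_{z+\sq_m}\aa\nab v_m=\aa(z+\sq_m)p$ exactly, so
\begin{equation}
\fint_{z+\sq_m}(\nab v-p)=\fint_{z+\sq_m}(\nab v-\nab v_m),\qquad
\fint_{z+\sq_m}(\aa\nab v-\bar\aa p)=\bigl(\aa(z+\sq_m)-\bar\aa\bigr)p+\fint_{z+\sq_m}\aa(\nab v-\nab v_m),
\end{equation}
and Jensen's inequality reduces everything to $\|\nab v-\nab v_m\|_{\ll^2(\sq_n)}^2$ (with an extra factor $\Lam(\sq_n)^2$ for the flux). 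The quadratic response identity \eqref{lem:qr2} with $q=0$, combined with \eqref{def:mu2}, then gives
\begin{equation}
\|\nab v-\nab v_m\|_{\ll^2(\sq_n)}^2\le\frac{C}{\lam(\sq_n)}\,3^{-(n-m)d}\!\!\sum_{z\in3^m\mathbb{Z}^d\cap\sq_n}\!\bigl|\aa(z+\sq_m)-\aa(\sq_n)\bigr|
\le\frac{C}{\lam(\sq_n)}\Bigl(|\aa(\sq_n)-\bar\aa|+3^{-(n-m)d}\!\!\sum_{z}\bigl|\aa(z+\sq_m)-\bar\aa\bigr|\Bigr),
\end{equation}
after which summing $3^{-(n-m)}$ times the square roots over $m\le n-1$ and using that the $|\aa(\sq_n)-\bar\aa|$ contribution is itself part of $\Om(n)$ yields exactly the $\Om(n)$-terms in \eqref{prop:ieqfl} and \eqref{prop:ieqde}. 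Until you either carry out your own comparison scheme in full or switch to this same-slope comparison, the proof is incomplete at its central point.
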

\begin{proof}
We fix $p \in B_1$, $n \in \mathbb{N}$ and denote $v:=v(\cdot,\sq_n,p)$. By using the multiscale Poincar\'{e} inequality (Proposition \ref{lemlem:mpi}), we have
\begin{align}
\|\nab v -p &\|_{\hh^{-1}(\sq_n)}^2 \leq \,C\, \|\nab v-p \|_{\ll^2(\sq_n)}^2 \\
&+C3^{2n}\left( \sum_{m=0}^{n-1} 3^{-(n-m)}\left( 3^{-(n-m)d}\sum_{z\in3^m\mathbb{Z}^d\cap \sq_{n}} \left| \fint_{z+\sq_m}\left(\nab v-p\right) \right|^2\right)^{\frac{1}{2}}\right)^2 \label{pr:wc3}
\end{align}
and
\begin{align}
\|\aa\nab v -\bar{\aa}p &\|_{\hh^{-1}(\sq_n)}^2 \leq \,C\, \|\aa\nab v-\bar{\aa}p \|_{\ll^2(\sq_n)}^2 \\
&+C3^{2n}\left( \sum_{m=0}^{n-1} 3^{-(n-m)}\left( 3^{-(n-m)d}\sum_{z\in3^m\mathbb{Z}^d\cap \sq_{n}} \left| \fint_{z+\sq_m}\left(\aa\nab v-\bar{\aa}p\right) \right|^2\right)^{\frac{1}{2}}\right)^2. \label{pr:wc4}
\end{align}
Since $v(\cdot,\sq_n,p)=-v(\cdot,\sq_n,p,0)$, it follows from \eqref{ieq:v2} that
\begin{equation}
\fint_{\sq_n} |\nab v|^2 \leq \frac{\Lam(\sq_n)}{\lam(\sq_n)} \quad \text{and} \quad \fint_{\sq_n} |\aa\nab v|^2 \leq \Lam(\sq_n)^2 \fint_{\sq_n} |\nab v|^2 \leq \frac{\Lam(\sq_n)^3}{\lam(\sq_n)}.
\end{equation}
Thus, triangle inequality shows that 
\begin{equation}
\|\nab v-p \|_{\ll^2(\sq_n)}^2\leq \frac{\Lam(\sq_n)}{\lam(\sq_n)} \quad \text{and} \quad \|\aa\nab v-\bar{\aa}p \|_{\ll^2(\sq_n)}^2\leq \frac{\Lam(\sq_n)^3}{\lam(\sq_n)}+| \bar{\aa} |^2. \label{pr:wc5}
\end{equation}
It remains to estimate the last terms. For $m \in \{ 0,1,2 \dots n-1\}$, we define a function $v_m \in H^1(\sq_n)$ which satisfies, for every $z \in 3^m \mathbb{Z}^d \cap \sq_n$,
\begin{equation}
v_m(x):=v(x,z+\sq_m,p) \quad( x\in z+\sq_m).
\end{equation}
Since $v(\cdot, z+\sq_m,p)$ belongs to $l_p+H_0^1(z+\sq_m)$, it is easy to check that $v_m$ belongs to $l_p+H_0^1(\sq_n)$. This gives $\fint_{z+\sq_m}(\nab v_m)=p$. Using this and \eqref{lem:rq3}, we obtain
\begin{equation}
\fint_{z+\sq_m} \left(\nab v-p\right)=\fint_{z+\sq_m}\left(\nab v- \nab v_m\right) \label{pr:wc6}
\end{equation}
and
\begin{equation}
\fint_{z+\sq_m}\left(\aa\nab v-\bar{\aa}p\right)=\left(\aa(z+\sq_m)-\bar{\aa}\right)p+\fint_{z+\sq_m} \left( \aa(\nab v-\nab v_m) \right). \label{pr:wc7}
\end{equation}
From $\left| \left(\aa(z+\sq_m)-\bar{\aa}\right)p \right| \leq \Lam(\sq_n)+|\bar{\aa}|$, we see that
\begin{equation}
\left( \sum_{m=0}^{n-1} 3^{-(n-m)}\left( 3^{-(n-m)d}\sum_{z\in3^m\mathbb{Z}^d\cap \sq_{n}} \left| \left(\aa(z+\sq_m)-\bar{\aa}\right)p \right|^2\right)^{\frac{1}{2}}\right)^2 \leq \left( \Lam(\sq_n)+|\bar{\aa}| \right)\Om(n). \label{pr:wc8}
\end{equation}
Furthermore, Jensen's inequality yields
\begin{align}
3^{-(n-m)d}\sum_{z\in3^m\mathbb{Z}^d\cap \sq_{n}}\left| \fint_{z+\sq_m}\left(\nab v -\nab v_m \right)\right|^2 &\leq 3^{-(n-m)d}\sum_{z\in3^m\mathbb{Z}^d\cap \sq_{n}}\| \nab v-\nab v_m\|_{\ll^2(z+\sq_m)}^2 \\
&= \| \nab v-\nab v_m\|_{\ll^2(\sq_n)}^2 \label{pr:wc9}
\end{align}
and
\begin{equation}
3^{-(n-m)d}\sum_{z\in3^m\mathbb{Z}^d\cap \sq_{n}}\left|\fint_{z+\sq_m}\left(\aa(\nab v -\nab v_m )\right)\right|^2 \leq \Lam(\sq_n)^2\| \nab v-\nab v_m\|_{\ll^2(\sq_n)}^2. \label{pr:wc10}
\end{equation}
From \eqref{lem:qr2} and \eqref{lem:rq2}, it follows that
\begin{align}
&\sum_{m=0}^{n-1}3^{-(n-m)}\| \nab v-\nab v_m\|_{\ll^2(\sq_n)} \\
\leq& \sum_{m=0}^{n-1} 3^{-(n-m)}\left( \frac{2}{\lam(\sq_n)} 3^{-(n-m)d} \sum_{z\in3^m\mathbb{Z}^d\cap \sq_{n}}\left(\mu(z+\sq_m,p)-\mu(\sq_n,p) \right) \right)^{\frac{1}{2}} \\
\leq&\frac{1}{\lam(\sq_n)^{\frac{1}{2}}} \sum_{m=0}^{n-1} 3^{-(n-m)}\left( 3^{-(n-m)d} \sum_{z\in3^m\mathbb{Z}^d\cap \sq_{n}}\left| \aa(z+\sq_m)-\aa(\sq_n) \right| \right)^{\frac{1}{2}} \\
\leq&\frac{1}{\lam(\sq_n)^{\frac{1}{2}}} \sum_{m=0}^{n-1} 3^{-(n-m)}\left( \left| \aa(\sq_n)-\bar{\aa} \right|+3^{-(n-m)d} \sum_{z\in3^m\mathbb{Z}^d\cap \sq_{n}}\left| \aa(z+\sq_m)-\bar{\aa} \right| \right)^{\frac{1}{2}} \\
\leq&\frac{1}{\lam(\sq_n)^{\frac{1}{2}}} \left( \left| \aa(\sq_n)-\bar{\aa} \right|^{\frac{1}{2}}+\sum_{m=0}^{n-1} 3^{-(n-m)}\left( 3^{-(n-m)d} \sum_{z\in3^m\mathbb{Z}^d\cap \sq_{n}}\left| \aa(z+\sq_m)-\bar{\aa} \right| \right)^{\frac{1}{2}} \right) \\
=&\,C\left( \frac{\Om(n)}{\lam(\sq_n)} \right)^{\frac{1}{2}}.  \label{pr:wc11}
\end{align}
Combining \eqref{pr:wc6}, \eqref{pr:wc9} and \eqref{pr:wc11}, we obtain
\begin{equation}
\left( \sum_{m=0}^{n-1} 3^{-(n-m)}\left( 3^{-(n-m)d}\sum_{z\in3^m\mathbb{Z}^d\cap \sq_{n}} \left| \fint_{z+\sq_m}\left(\nab v-p\right) \right|^2\right)^{\frac{1}{2}} \right)^2 \leq C\frac{\Om(n)}{\lam(\sq_n)}. \label{pr:wc12}
\end{equation}
Thus, \eqref{pr:wc12}, \eqref{pr:wc3} and \eqref{pr:wc5} lead to \eqref{prop:ieqfl}. From \eqref{pr:wc7}, \eqref{pr:wc8}, \eqref{pr:wc10} and \eqref{pr:wc11}, we have
\begin{align}
&\left( \sum_{m=0}^{n-1} 3^{-(n-m)}\left( 3^{-(n-m)d}\sum_{z\in3^m\mathbb{Z}^d\cap \sq_{n}} \left| \fint_{z+\sq_m}\left(\aa\nab v-\bar{\aa}p\right) \right|^2\right)^{\frac{1}{2}} \right)^2 \\
\leq &\, C \left( \frac{\Lam(\sq_n)^2}{\lam(\sq_n)} + \Lam(\sq_n) + |\bar{\aa}| \right) \Om(n) \leq C\left( \frac{\Lam(\sq_n)^2}{\lam(\sq_n)} + |\bar{\aa}| \right) \Om(n). \label{pr:wc13}
\end{align}
Thus, \eqref{pr:wc13}, \eqref{pr:wc4} and \eqref{pr:wc5} lead to \eqref{prop:ieqde}, which completes the proof.
\end{proof}


\subsection{Error Estimate for the Dirichlet Problem} \label{sec:EEDP}
The following theorem gives an estimate of the error for the Dirichlet problem using $\Om(n)$. For a general domain $U$ and boundary condition $f$, we approximate the solution with a cutoff function and $v(\cdot,\sq_n,p)$, which results in the estimate of $\Om(n)$. 
\begin{thm} \label{thmthm:EEDP}
Suppose that there exist $a>0$ and $A < \infty$ satisfying 
\begin{equation}
a\textup{\textsf{Id}} \leq \bar{\aa} \leq A\textup{\textsf{Id}}. \label{ieq:aA}
\end{equation} 
Let $U \subseteq \sq_0$ be a bounded Lipschitz domain and $\del >0$. Then, there exist constants $b=b(d,a,A,U,\del)>0$ and $C=C(d,a,A,U,\del)< \infty$ such that the following holds: For every $\eps \in (0,1]$, $f \in W^{1,2+\del}(U)$, $n \in [-\log_3 \eps ,-\log_3 \eps +1)\cap \mathbb{N}$ and the unique solutions $u^\eps,u \in f+H_0^1(U)$ of the Dirichlet problems
\begin{alignat}{2}
-\nab \cdot \aa \left( \frac{\cdot}{\eps} \right) \nab u^\eps &=0 \quad(\text{in } U),  \qquad u^\eps&=f \quad( \text{on } \partial U), \\
-\nab \cdot \bar{\aa}  \nab u &=0 \quad(\text{in } U), \qquad u&=f \quad( \text{on } \partial U), \label{con:uequ}
\end{alignat}
in the distribution sense, we have, for every $r \in (0,1)$, that 
\begin{align}
\|u&-u^{\eps} \|_{L^2(U)}  \leq  C \frac{\Lam(\sq_n)+1}{\lam(\sq_n)} \left\| \nab f \right\|_{L^{2+\del}(U)}  \\ 
&\times\left( r^b+\frac{1}{r^{2+d/2}}  \left\{ \left( \frac{\Lam(\sq_n)^3+\Lam(\sq_n)}{\lam(\sq_n)}\right)^{\frac{1}{2}}\eps +\left( \left( \frac{\Lam(\sq_n)^2+1}{\lam(\sq_n)}\right)^{\frac{1}{2}}+1\right) \Om(n)^{\frac{1}{2}} \right\}\right). \label{thm:eedp}
\end{align}
\end{thm}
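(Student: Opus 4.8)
\emph{Proof proposal.} The plan is to estimate $\|u-u^\eps\|_{L^2(U)}$ by duality against a two-scale expansion built from the functions $v(\cdot,\sq_n,\cdot)$. Fix $g\in L^2(U)$ with $\|g\|_{L^2(U)}\le1$ and let $\phi\in H_0^1(U)$ solve the homogenized (self-adjoint) problem $-\nab\cdot\bar\aa\nab\phi=g$ in $U$; by \eqref{ieq:aA} one has $\|\nab\phi\|_{L^2(U)}\le C(a,A)$, and the Meyers estimate for the constant-coefficient operator gives $\del'=\del'(d,a,A,U)\in(0,\del]$ with $\|\nab\phi\|_{L^{2+\del'}(U)}\le C$ (the analogous bound for $u$ is where the $W^{1,2+\del}$ hypothesis on $f$ enters). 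Since $u-u^\eps\in H_0^1(U)$ and $-\nab\cdot\bar\aa\nab u=0$, integration by parts together with $\phi\in H_0^1(U)$ gives $\int_U(u-u^\eps)g=-\int_U\nab u^\eps\cdot\bar\aa\nab\phi$. I then choose a cutoff $\eta_r\in C_c^\infty(U)$ with $0\le\eta_r\le1$, $\eta_r\equiv1$ on $U_{2r}$, $\eta_r\equiv0$ on $U\setminus U_r$, $|\nab^j\eta_r|\lesssim r^{-j}$, and form the two-scale expansion $\Phi^\eps:=\phi+\eps\sum_{k=1}^{d}\eta_r\,\big(v(\cdot/\eps,\sq_n,e_k)-l_{e_k}(\cdot/\eps)\big)\,\partial_k\phi_\rho$, where $\phi_\rho$ is a mollification of $\phi$ at an intermediate scale $\rho\in[\eps,r]$. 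This $\Phi^\eps$ lies in $H_0^1(U)$ (its corrector part is supported in $U_r$, and $U\subseteq\sq_0$ forces $\tfrac{1}{\eps}U\subseteq\tfrac{1}{\eps}\sq_0\subseteq\sq_n$ for $n\ge-\log_3\eps$, so $v(\cdot,\sq_n,e_k)$ is defined on $\tfrac{1}{\eps}U$); since also $u^\eps\in f+H_0^1(U)$ solves the heterogeneous equation, $\int_U\nab u^\eps\cdot\aa(\cdot/\eps)\nab\Phi^\eps=0$, whence $\int_U(u-u^\eps)g=\int_U\nab u^\eps\cdot\big(\aa(\cdot/\eps)\nab\Phi^\eps-\bar\aa\nab\phi\big)$.

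It remains to estimate the two-scale defect $\aa(\cdot/\eps)\nab\Phi^\eps-\bar\aa\nab\phi$ tested against $\nab u^\eps$. Expanding $\nab\Phi^\eps$ and using that both $\aa(\cdot/\eps)\nab v(\cdot/\eps,\sq_n,e_k)$ and $\bar\aa e_k$ are divergence-free on $\tfrac{1}{\eps}U$, one decomposes the defect into (i) a flux-corrector piece $\sum_k\eta_r\,\partial_k\phi_\rho\,\big(\aa(\cdot/\eps)\nab v(\cdot/\eps,\sq_n,e_k)-\bar\aa e_k\big)$, (ii) gradient-corrector pieces involving $\nab v(\cdot/\eps,\sq_n,e_k)-e_k$, (iii) a boundary-layer term $(\aa(\cdot/\eps)-\bar\aa)(1-\eta_r)\nab\phi$ plus a mollification error of the form $\bar\aa(\nab\phi-\nab\phi_\rho)$, both controlled because $|U\setminus U_{2r}|\lesssim r$ (Lipschitz domain) combined with the Meyers higher integrability of $\nab\phi$, and (iv) commutator terms where $\nab$ lands on $\eta_r\partial_k\phi_\rho$, carrying the oscillating factor $\eps(v(\cdot/\eps,\sq_n,e_k)-l_{e_k}(\cdot/\eps))$; the pieces in (i),(ii),(iv) are divergence-free in their oscillating factor or reorganize, using $\nab\cdot(\aa(\cdot/\eps)\nab u^\eps)=0$ and the product rule, into an $H_0^1$-test function pairing to $0$ plus a corrector remainder, so that after integrating by parts to move $\nab u^\eps$ onto the slowly varying factor they are controlled through the weak norm $\|\cdot\|_{\hh^{-1}}$. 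At this point Proposition \ref{propprop:ieqfl} is applied: the scaling $\|w(\cdot/\eps)\|_{\hh^{-1}(\eps V)}=\eps\|w\|_{\hh^{-1}(V)}$ and $1\le\eps3^n<3$ convert \eqref{prop:ieqfl} and \eqref{prop:ieqde} into $\eps(\Lam(\sq_n)/\lam(\sq_n))^{1/2}+(\Om(n)/\lam(\sq_n))^{1/2}$ (gradient corrector) and $\eps(\Lam(\sq_n)^3/\lam(\sq_n))^{1/2}+(\Lam(\sq_n)^2\Om(n)/\lam(\sq_n))^{1/2}$ (flux corrector); pairing against $\nab u^\eps$ costs $\|\nab u^\eps\|_{L^2(U)}$, moving a derivative onto $\eta_r\partial_k\phi_\rho$ costs $1/r$ (through $|\nab\eta_r|\lesssim1/r$ and interior estimates for $\phi$ controlling $\|\nab^2\phi_\rho\|_{L^\infty(\mathrm{supp}\,\eta_r)}$), altogether producing the factor $r^{-2-d/2}$ of \eqref{thm:eedp}.

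For the prefactor I use the elementary energy identity $\lam(\sq_n)\|\nab u^\eps\|_{L^2(U)}^2\le\int_U\nab u^\eps\cdot\aa(\cdot/\eps)\nab u^\eps=\int_U\nab u^\eps\cdot\aa(\cdot/\eps)\nab f\le\Lam(\sq_n)\|\nab u^\eps\|_{L^2(U)}\|\nab f\|_{L^2(U)}$, which gives $\|\nab u^\eps\|_{L^2(U)}\le(\Lam(\sq_n)/\lam(\sq_n))\|\nab f\|_{L^2(U)}$; together with $\|\nab\phi\|_{L^2(U)}\le C$ this is the origin of the prefactor $\tfrac{\Lam(\sq_n)+1}{\lam(\sq_n)}\|\nab f\|_{L^{2+\del}(U)}$ and of the $r^b$ term, the latter being the piece (iii), bounded by $\|\nab u^\eps\|_{L^2(U)}\,r^{b}$ with $b=b(d,a,A,U,\del)>0$ via Meyers and $|U\setminus U_{2r}|\lesssim r$. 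Collecting (i)--(iv), taking the supremum over $g$, and using $\|\nab f\|_{L^2(U)}\le\|\nab f\|_{L^{2+\del}(U)}$ yields \eqref{thm:eedp}. I expect the main obstacle to be twofold. First, the bookkeeping requirement that the final constant depend only on $d,a,A,U,\del$, with all dependence on the ellipticity ratio $\Lam(\sq_n)/\lam(\sq_n)$ exposed as explicit factors: this forbids invoking any deterministic regularity estimate for $u^\eps$ whose constant secretly involves that ratio, so $u^\eps$ may enter only through the energy identity above and the orthogonality $\int_U\nab u^\eps\cdot\aa(\cdot/\eps)\nab\psi=0$, $\psi\in H_0^1(U)$. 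Second, controlling the commutator and boundary-layer terms (iii)--(iv), which carry the oscillating corrector $\eps(v(\cdot/\eps,\sq_n,e_k)-l_{e_k}(\cdot/\eps))$ that is \emph{not} small in $L^2$ (it obeys only the Poincaré bound $\lesssim(\Lam(\sq_n)/\lam(\sq_n))^{1/2}$), so one must exploit the divergence-free structure of both $\aa(\cdot/\eps)\nab u^\eps$ and $\aa(\cdot/\eps)\nab v(\cdot/\eps,\sq_n,e_k)$ together with the weak-norm bounds of Proposition \ref{propprop:ieqfl} rather than a crude Cauchy--Schwarz estimate, and use the Lipschitz structure of $U$ and the fractional global regularity of $\phi$ and $u$ to fix the exponent $b$.
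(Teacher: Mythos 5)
Your outline is a genuinely different route from the paper's: you estimate $\|u-u^\eps\|_{L^2(U)}$ by duality, solving $-\nab\cdot\bar\aa\nab\phi=g$ and two-scale expanding the \emph{dual} solution $\phi$, whereas the paper two-scale expands the homogenized solution $u$ itself, setting $w^\eps=u+\eps\eta_r\sum_i\partial_{x_i}u\,\phi_{n,e_i}(\cdot/\eps)$, bounds $\nab\cdot(\aa(\cdot/\eps)\nab w^\eps)$ in $H^{-1}(U)$, compares $u^\eps$ with $w^\eps$ by testing the equation with $u^\eps-w^\eps\in H_0^1(U)$, and only then passes to $L^2$ via the multiscale Poincar\'e inequality. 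The duality skeleton and the use of Proposition \ref{propprop:ieqfl} with the $\hh^{-1}$ rescaling are sound, but as written the argument does not deliver the stated inequality, for two concrete reasons.

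First, the mollification $\phi_\rho$ is forced on you precisely because $\phi$ carries the right-hand side $g$: unlike $u$, it is not $\bar\aa$-harmonic, so the mean value property that gives $\|\nab^k u\|_{L^\infty(U_r)}\leq Cr^{-(k-1+d/2)}\|\nab f\|_{L^2(U)}$ (the paper's Step 1, which is exactly where the exponent $2+d/2$ comes from) is unavailable, and only $\|\nab^2\phi\|_{L^2(U_r)}\lesssim r^{-1}$ is at hand. You never estimate the resulting defect $\eta_r(\aa(\cdot/\eps)-\bar\aa)(\nab\phi-\nab\phi_\rho)$ tested against $\nab u^\eps$, nor fix $\rho$. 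With $\rho\sim r$ this defect is of size $\rho\|\nab^2\phi\|_{L^2}\sim 1$, i.e.\ not small; with $\rho\ll r$ (say $\rho\lesssim r^{1+b}$ to make it $O(r^b)$) the $L^\infty$ bounds on $\nab^2\phi_\rho,\nab^3\phi_\rho$ that you need when pairing the $\hh^{-1}$ corrector norms against $u^\eps\,\nab(\eta_r\partial_k\phi_\rho)$ cost $\rho^{-1-d/2}r^{-1}$, which is strictly worse than the claimed $r^{-2-d/2}$. So no admissible choice of $\rho$ reproduces the right-hand side of \eqref{thm:eedp}; at best your route yields a weaker variant with larger powers of $1/r$ or fractional powers of $\eps$. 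Second, the prefactor bookkeeping does not close: since every term is paired against $\nab u^\eps$ and bounded through $\|\nab u^\eps\|_{L^2(U)}\leq(\Lam(\sq_n)/\lam(\sq_n))\|\nab f\|_{L^2(U)}$, your boundary-layer piece (iii) comes out as $(\Lam(\sq_n)+1)\frac{\Lam(\sq_n)}{\lam(\sq_n)}\,r^b\|\nab f\|$, which exceeds the claimed $\frac{\Lam(\sq_n)+1}{\lam(\sq_n)}r^b\|\nab f\|_{L^{2+\del}(U)}$ by a factor $\Lam(\sq_n)$ that cannot be absorbed into the other terms (take $\aa=t\,$times the identity with $t$ large to see the discrepancy). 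The paper avoids both problems at once by expanding $u$: the boundary layer is measured on $\nab u$ through the Meyers estimate with boundary datum $f$ (which is where the hypothesis $f\in W^{1,2+\del}$ enters), and the single factor $1/\lam(\sq_n)$ appears exactly once, in Step 3, when $u^\eps-w^\eps$ is used as a test function. A smaller point in the same direction: Lemma \ref{lemlem:meyers} is stated for boundary-value problems without a source term, so it does not directly give the higher integrability of $\nab\phi$ for your dual problem, and after your integration by parts the function $u^\eps$ itself (not only $\nab u^\eps$) appears, so you must normalize $f$ to control $\|u^\eps\|_{L^2(U)}$. All of these are consequences of expanding the dual solution rather than $u$; switching the expansion to $u$ essentially turns your argument into the paper's proof and recovers the stated exponents and prefactor.
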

\begin{proof}
We write $U_r:=\left\{ x \in U: \text{dist}(x,\partial U)>r \right\}$. Let $\eta_r \in C_c^{\infty}(U)$ be a cutoff function satisfying, for every $k \in \{1,2,3 \}$,
\begin{equation}
0 \leq \eta_r \leq 1, \quad \eta_r=1 \;\text{in}\;U_{2r}, \quad \eta_r=0\; \text{in}\;U\setminus U_r, \quad |\nab^k \eta_r| \leq \frac{C}{r^k} \label{cutoff}
\end{equation}
and we set, for each $n \in \mathbb{N}$ and $p \in \mathbb{R}$, 
\begin{equation}
\phi_{n,p}(x):= v(x,\sq_n,p)-p\cdot x.
\end{equation}
Note that $-l_p+\phi_{n,p}$ is the solution of the Diriclet ploblem in $\sq_n$ with boundary condition $l_p$, and we have estimated $\phi_{n,p}$ in the previous section. The strategy of the proof is to approximate $u^{\eps}$ by the function
\begin{equation}
w^{\eps}(x):=u(x)+\eps \eta_r(x)\sum_{i=1}^{d} \partial_{x_i} u(x)\phi_{n,e_i}\left( \frac{x}{\eps}\right). \label{def:we}
\end{equation}
Most of what we show in this proof is that the error between $u$ and $u^{\eps}$ is estimated by the random variable
\begin{equation}
\Psi(\eps):=\sum_{i=1}^d \left( \eps \left\| \phi_{n,e_i} \left( \frac{\cdot}{\eps} \right)\right\|_{L^2(\eps \sq_n)}+\left\| \aa\left(\frac{\cdot}{\eps} \right) \left( e_i+ (\nab \phi_{n,e_i}) \left( \frac{\cdot}{\eps} \right)\right)-\bar{\aa}e_i\right\|_{H^{-1}(\eps\sq_n)} \right)^2.
\end{equation}

\textit{Step 1.} We note that the estimates obtained by $u$ being the solution to the differential equation with constant coefficients. First, we use mean value property. Since $u(\bar{\aa}^{\frac{1}{2}}\cdot)$ is harmonic in $\bar{\aa}^{-\frac{1}{2}}U$, it follows from mean value property that there exists constants $c=c(a,A) \in (0,1)$ and $C=C(d,a,A) < \infty$ such that, for every $z \in U_r$ and $k \in \{1,2,3\}$,
\begin{equation}
\| \nab ^k u \|_{L^{\infty}(B_{cr}(z))} \leq \frac{C}{r^{k-1}}\| \nab u \|_{\ll^2(B_r(z))} \leq \frac{C}{r^{k-1+d/2}} \| \nab u \|_{L^2(U)}. 
\end{equation}
Since $u$ is a weak solution of \eqref{con:uequ}, we get
\begin{equation}
\int_{U}\nab u \cdot \bar{\aa}\nab u = \inf_{v \in f+H_0^1(U)} \int_{U}\nab v \cdot \bar{\aa}\nab v   \leq  \int_{U}\nab f \cdot \bar{\aa}\nab f . \label{ieq:ufw}
\end{equation}
From \eqref{ieq:aA} and \eqref{ieq:ufw}, it follows that
\begin{equation}
\| \nab u \|_{L^2(U)}^2 \leq \frac{1}{a} \int_{U}\nab u \cdot \bar{\aa}\nab u \leq \frac{1}{a} \int_{U}\nab f \cdot \bar{\aa}\nab f \leq \frac{A}{a} \| \nab f \|_{L^2(U)}^2 .
\end{equation}
Combining the previous two inequalities shows that there exists a constant $C=C(k,d,a,A) < \infty$ such that
\begin{equation}
\| \nab^k u \|_{L^{\infty}(U_r)} \leq \frac{C}{r^{k-1+d/2}} \| \nab f \|_{L^2(U)}. \label{ieq:ahmu}
\end{equation}
Second, we use the Meyers estimate. From Lemma \ref{lemlem:meyers}, we get the existence of the constants $\del'=\del'(U,d,a,A) \in (0,\del)$ and $C=C(U,d,a,A)<\infty$ such that
\begin{equation}
\| \nab u \|_{L^{2+\del'}(U)} \leq C \| \nab f \|_{L^{2+\del'}(U)} \leq C \| \nab f \|_{L^{2+\del}(U)} .
\end{equation}
Since $U$ is a bounded Lipschitz domain, we obtain the estimate of $\nab u$ in the boundary layer by H\"{o}lder's inequality: for $b:=\frac{\del'}{4+2\del'}$,
\begin{equation}
\| \nab u \|_{L^2(U \setminus U_{2r})} \leq |U \setminus U_{2r}|^{b} \| \nab u \|_{L^{2+\del'}(U)} \leq Cr^b  \| \nab f \|_{L^{2+\del}(U)}. \label{ieq:blayer}
\end{equation}

\textit{Step 2.} In this step, we will get the estimate
\begin{equation}
\left\| \nab \cdot \left( \aa\left(\frac{\cdot}{\eps} \right)\nab w^{\eps} \right)\right\|_{H^{-1}(U)} \leq C \left( \Lam(\sq_n)+1 \right) \| \nab f \|_{L^{2+\del}(U)} \left(r^b+\frac{1}{r^{2+d/2}}\Psi(\eps)^{\frac{1}{2}} \right). \label{pr:eedpst2}
\end{equation}
Since $U\subseteq \sq_0 \subseteq \eps \sq_n$ and $\left(\nab v(\cdot,\sq_n,e_i) \right)\left(\frac{x}{\eps}\right)=e_i+(\nab\phi_{n,e_i})\left(\frac{x}{\eps}\right)$, it follows from \eqref{def:weak} that
\begin{align}
&\nab \cdot \left( \aa\left(\frac{\cdot}{\eps}\right) \eta_r (\partial_{x_i} u)\left( e_i+(\nab \phi_{e_i})\left(\frac{\cdot}{\eps} \right) \right)\right) \\
=& \, \nab \left( \eta_r \partial_{x_i}u\right)\cdot \left( \aa\left( \frac{\cdot}{\eps}\right)\left( e_i+(\nab \phi_{n,e_i})\left( \frac{\cdot}{\eps}\right)\right) \right) +\eta_r (\partial_{x_i}u) \nab \cdot \left( \aa\left( \frac{\cdot}{\eps}\right)\left( e_i+(\nab \phi_{n,e_i})\left( \frac{\cdot}{\eps}\right)\right) \right) \\
=& \, \nab \left( \eta_r \partial_{x_i}u\right)\cdot \left( \aa\left( \frac{\cdot}{\eps}\right)\left( e_i+(\nab \phi_{n,e_i})\left( \frac{\cdot}{\eps}\right)\right) \right) . \label{eq:sasa1}
\end{align}
From \eqref{def:we}, we have
\begin{align}
&\nab w^{\eps}=\nab u+ \eta_r \sum_{i=1}^{d} (\partial_{x_i} u) ( \nab \phi_{n,e_i})\left( \frac{\cdot}{\eps}\right) + \eps \sum_{i=1}^{d} \nab \left( \eta_r \partial_{x_i} u \right) \phi_{n,e_i} \left( \frac{\cdot}{\eps}\right) \\
=&\sum_{i=1}^d \left( \eta_r (\partial_{x_i} u) \left( e_i+(\nab \phi_{n,e_i})\left( \frac{\cdot}{\eps}\right)\right) + \left( 1-\eta_r \right)(\partial_{x_i} u)e_i + \eps \, \nab \left( \eta_r \partial_{x_i} u \right) \phi_{n,e_i} \left( \frac{\cdot}{\eps}\right) \right). \label{eq:sasa2}
\end{align}
Combining \eqref{eq:sasa1} and \eqref{eq:sasa2}, we obtain
\begin{align}
\nab \cdot \left( \aa\left(\frac{\cdot}{\eps}\right)\nab w^{\eps} \right)=&\sum_{i=1}^d \nab \left( \eta_r \partial_{x_i}u\right)\cdot \aa\left( \frac{\cdot}{\eps}\right)\left( e_i+(\nab \phi_{n,e_i})\left( \frac{\cdot}{\eps}\right)\right) \\
&+\nab \cdot \left( \aa\left(\frac{\cdot}{\eps}\right) \left( (1-\eta_r)\nab u + \eps \sum_{i=1}^{d} \phi_{n,e_i} \left( \frac{\cdot}{\eps}\right)\nab(\eta_r \partial_{x_i}u)\right)\right). \label{ieq:3hhhh}
\end{align}
Then, \eqref{con:uequ} leads to 
\begin{equation}
\sum_{i=1}^{d} \nab(\eta_r \partial_{x_i}u)\cdot \bar{\aa}e_i=\nab \cdot (\eta_r \bar{\aa}\nab u) = -\nab \cdot \left( (1-\eta_r)\bar{\aa}\nab u\right). \label{ieq:3hhhi}
\end{equation}
From \eqref{ieq:3hhhh} and \eqref{ieq:3hhhi}, we have
\begin{align}
\nab \cdot \left( \aa\left(\frac{\cdot}{\eps}\right)\nab w^{\eps} \right)=&\sum_{i=1}^d \nab \left( \eta_r \partial_{x_i}u\right)\cdot \left(\aa\left( \frac{\cdot}{\eps}\right)\left( e_i+(\nab \phi_{n,e_i})\left( \frac{\cdot}{\eps}\right)\right)-\bar{\aa}e_i \right) \\
&+\nab \cdot \left( (1-\eta_r)\left(\aa\left(\frac{\cdot}{\eps}\right)-\bar{\aa} \right)\nab u\right)+ \nab \cdot \left( \eps \sum_{i=1}^d \phi_{e_i}\left( \frac{\cdot}{\eps}\right)\aa\left( \frac{\cdot}{\eps}\right)\nab(\eta_r \partial_{x_i}u)\right).
\end{align}
It follows that, for every $F:U \to \mathbb{R}^d$ whose entries are belong to $H^1(U)$,
\begin{align}
\| \nab \cdot F \|_{H^{-1}(U)} &= \sup \left\{ \int_{U} \left( \nab \cdot F\right) v : v \in H_0^1(U),\|v\|_{H^1(U)}\leq1\right\} \\
&= \sup \left\{ \int_{U} F\cdot \nab v : v \in H_0^1(U),\|v\|_{H^1(U)}\leq1\right\}  \leq C \| F\|_{L^2(U)} .
\end{align}
Combining the previous two inequalities, we obtain
\begin{align}
&\| \nab \cdot \left( \aa\left(\frac{\cdot}{\eps}\right)\nab w^{\eps} \right) \|_{H^{-1}(U)} \\
\leq& \sum_{i=1}^d \left\| \nab \left( \eta_r \partial_{x_i}u\right) \right\|_{W^{1,\infty}(U)} \left\| \aa\left( \frac{\cdot}{\eps}\right)\left( e_i+(\nab \phi_{n,e_i})\left( \frac{\cdot}{\eps}\right)\right)-\bar{\aa}e_i \right\|_{H^{-1}(\eps\sq_n)} \\
&+C\left\| (1-\eta_r)\left(\aa\left(\frac{\cdot}{\eps}\right)-\bar{\aa} \right)\nab u \right\|_{L^2(U)} +C\sum_{i=1}^{d} \left\|  \eps \phi_{n,e_i}\left( \frac{\cdot}{\eps}\right)\aa\left( \frac{\cdot}{\eps}\right)\nab(\eta_r \partial_{x_i}u)\right\|_{L^2(U)}.
\end{align}
For the second term on the right-hand side, we use \eqref{ieq:blayer} and the fact that $1- \eta_r =0$ in $U_{2r}$ to obtain
\begin{align}
\left\| (1-\eta_r)\left(\aa\left(\frac{\cdot}{\eps}\right)-\bar{\aa} \right)\nab u \right\|_{L^2(U)} &\leq \left( \Lam(\sq_n)+| \bar{\aa} | \right) \left\| (1-\eta_r)\nab u \right\|_{L^2(U)}  \\
&\leq C\left( \Lam(\sq_n)+1 \right) r^b  \| \nab f \|_{L^{2+\del}(U)} .
\end{align}
On the other hand, \eqref{cutoff} and \eqref{ieq:ahmu} imply
\begin{equation}
\left\| \nab (\eta_r \partial_{x_i} u)\right\|_{W^{1,\infty}(U)} \leq \frac{C}{r^{2+d/2}} \left\| \nab f\right\|_{L^2(U)}.
\end{equation}
Combining the previous three inequalities and the definition of $\Psi(\eps)$ yields \eqref{pr:eedpst2}.

\textit{Step 3.}
In this step, we claim that
\begin{equation}
\|\nab u^{\eps}- \nab w^{\eps} \|_{\widehat{H}^{-1}(U)} \leq C \frac{\Lam(\sq_n)+1}{\lam(\sq_n)}  \| \nab f \|_{L^{2+\del}(U)} \left( r^b+ \frac{1}{r^{2+d/2}} \Psi(\eps)^{\frac{1}{2}} \right). \label{pr:eedpst3}
\end{equation}
Since $u^{\eps}-w^{\eps} \in H_0^1(U)$, it follows that
\begin{equation}
\left| \int_{U} \nab (u^{\eps}-w^{\eps}) \cdot \aa\left( \frac{\cdot}{\eps} \right)\nab w^{\eps} \right| \leq \| u^{\eps}-w^{\eps} \|_{H^1(U)} \left\| \nab \cdot \left( \aa\left(\frac{\cdot}{\eps} \right)\nab w^{\eps} \right)\right\|_{H^{-1}(U)}. \label{ieq:st3a}
\end{equation}
Testing the equation for $u^{\eps}$ with $u^{\eps}-w^{\eps} \in H_0^1(U)$ leads to 
\begin{equation}
\int_{U} \nab (u^{\eps}-w^{\eps}) \cdot \aa\left( \frac{\cdot}{\eps} \right)\nab u^{\eps} =0. \label{ieq:st3b}
\end{equation}
Combining \eqref{ieq:st3a} and \eqref{ieq:st3b} and using Poincar\'{e} inequality, we have
\begin{align}
\|\nab u^{\eps}- \nab w^{\eps} \|_{L^2(U)}^2 &\leq \frac{1}{\lam(\sq_n)} \int_{U} \nab (u^{\eps}-w^{\eps}) \cdot \aa\left( \frac{\cdot}{\eps} \right)\nab (u^{\eps}-w^{\eps}) \\
&\leq \frac{1}{\lam(\sq_n)} \| u^{\eps}-w^{\eps} \|_{H^1(U)} \left\| \nab \cdot \left( \aa\left(\frac{\cdot}{\eps} \right)\nab w^{\eps} \right)\right\|_{H^{-1}(U)} \\
&\leq \frac{C}{\lam(\sq_n)} \| \nab u^{\eps}- \nab w^{\eps} \|_{L^2(U)} \left\| \nab \cdot \left( \aa\left(\frac{\cdot}{\eps} \right)\nab w^{\eps} \right)\right\|_{H^{-1}(U)},
\end{align}
and thus
\begin{equation}
\| \nab u^{\eps}- \nab w^{\eps} \|_{L^2(U)} \leq \frac{C}{\lam(\sq_n)} \left\| \nab \cdot \left( \aa\left(\frac{\cdot}{\eps} \right)\nab w^{\eps} \right)\right\|_{H^{-1}(U)}.
\end{equation}
The fact that $u_{\eps}-w_{\eps} \in H_0^1(U)$ and Poincar\'{e} inequality lead to
\begin{equation}
\|\nab u^{\eps}- \nab w^{\eps} \|_{\widehat{H}^{-1}(U)} \leq  C\| u^{\eps}- w^{\eps} \|_{L^2(U)} \leq C \|\nab u^{\eps}- \nab w^{\eps} \|_{L^2(U)}.
\end{equation}
Hence, by using \eqref{pr:eedpst2}, \eqref{pr:eedpst3} is proved.

\textit{Step 4.} We estimate the $L^2$-norm of difference between $u^{\eps}$ and $ u$ by using $\Psi(\eps)$. The claim is that
\begin{equation}
\left\| u-u^{\eps} \right\|_{L^2(U)} \leq C \frac{\Lam(\sq_n)+1}{\lam(\sq_n)} \| \nab f \|_{L^{2+\del}(U)} \left( r^b+ \frac{1}{r^{2+d/2}} \Psi(\eps)^{\frac{1}{2}} \right). \label{pr:eedpst4}
\end{equation}
Since
\begin{equation}
w^{\eps}-u=\eps\eta_r \sum_{i=1}^d \partial_{x_i}u \phi_{n,e_i} \left( \frac{\cdot}{\eps} \right)
\end{equation}
is belong to $H_0^1(U)$, we have 
\begin{equation}
\| \nab w^{\eps}-\nab u \|_{\widehat{H}^{-1}(U)} \leq C \left\| \eps \eta_r \sum_{i=1}^d \partial_{x_i}u\phi_{n,e_i} \left( \frac{\cdot}{\eps}\right)\right\|_{L^2(U)} \leq C \|\nab u \|_{L^{\infty}(U_r)}\sum_{i=1}^d \eps \left\| \phi_{n,e_i} \left( \frac{\cdot}{\eps} \right)\right\|_{L^2(\eps\sq_n)}.
\end{equation} 
From this and \eqref{ieq:ahmu}, we have
\begin{equation}
\| \nab w^{\eps}-\nab u \|_{\widehat{H}^{-1}(U)} \leq \frac{C}{r^{d/2}} \| \nab f \|_{L^{2+\del}(U)} \Psi(\eps)^{\frac{1}{2}}.
\end{equation}
From this and \eqref{pr:eedpst3}, we obtain  
\begin{equation}
\| \nab u^{\eps}-\nab u \|_{\widehat{H}^{-1}(U)} \leq C \frac{\Lam(\sq_n)+1}{\lam(\sq_n)} \| \nab f \|_{L^{2+\del}(U)} \left( r^b+ \frac{1}{r^{2+d/2}} \Psi(\eps)^{\frac{1}{2}} \right).
\end{equation}
Since $u^{\eps}-u\in H_0^1(U)$ can be extended to $\sq_0$ by setting it to be $0$ on $\sq_0 \setminus U$, we apply Lemma \ref{lemlem:mpi} to obtain $\| u^{\eps}- u \|_{L^2(U)} \leq C \| \nab u^{\eps}-\nab u \|_{\widehat{H}^{-1}(U)}$, and we thus get \eqref{pr:eedpst4}.

\textit{Step 5.} To conclude the proof, it remains to estimate $\Psi(\eps)$ by $\Om(n)$. First we note the $\widehat{H}^{-1}$ norm of rescaling: for every $f \in L^2(\sq_n)$,
\begin{equation}
\left\| f \left( \frac{\cdot}{\eps} \right) \right\|_{\underline{H}^{-1}(\eps\sq_n)}=\eps \| f \|_{\underline{H}^{-1}(\sq_n)}.
\end{equation}
We also note that $| \eps\sq_n | < 3^d$ holds. Recall that $\phi_{n,e_i}=v(\cdot,\sq_n,e_i)-l_{e_i}$ belongs to $H_0^1(\sq_n)$. By applying Lemma \ref{lemlem:mpi} and Proposition \ref{propprop:ieqfl}, we obtain
\begin{align}
\Psi(\eps) \leq&\, C \sum_{i=1}^d \left( \eps^2 \left\| \phi_{n,e_i} \left( \frac{\cdot}{\eps} \right)\right\|_{\underline{L}^2(\eps \sq_n)}^2+\left\| \aa\left(\frac{\cdot}{\eps} \right) \left( e_i+ (\nab \phi_{n,e_i}) \left( \frac{\cdot}{\eps} \right)\right)-\bar{\aa}e_i\right\|_{\underline{H}^{-1}(\eps\sq_n)}^2 \right)  \\
\leq & \, C \eps^2 \sum_{i=1}^d \left( \left\| \phi_{n,e_i} \right\|_{\underline{L}^2(\sq_n)}^2+\left\| \aa( e_i+ \nab \phi_{n,e_i})-\bar{\aa}e_i\right\|_{\underline{H}^{-1}(\sq_n)}^2 \right)  \\
\leq&\, C \eps^2 \sum_{i=1}^d \left( \left\| \nab \phi_{n,e_i} \right\|_{\hh^{-1}(\sq_n)}^2+\left\| \aa( e_i+ \nab \phi_{n,e_i})-\bar{\aa}e_i\right\|_{\hh^{-1}(\sq_n)}^2 \right) \\
\leq&\, C \eps^2  \left\{ \left( \frac{\Lam(\sq_n)^3+\Lam(\sq_n)}{\lam(\sq_n)}+|\bar{\aa}|^2\right)+3^{2n}\left( \frac{\Lam(\sq_n)^2+1}{\lam(\sq_n)}+|\bar{\aa}|\right)\Om(n) \right\} \\
\leq&\, C \left( \frac{\Lam(\sq_n)^3+\Lam(\sq_n)}{\lam(\sq_n)} \right) \eps^2 + C \left( \frac{\Lam(\sq_n)^2+1}{\lam(\sq_n)}+1\right)\Om(n).
\end{align}
Combining this inequality and \eqref{pr:eedpst4} gives \eqref{thm:eedp}.
\end{proof}

\subsection{Proof of Theorem \ref{thm:king}} \label{proofking}
In this section, we prove Theorem \ref{thm:king} by combining the results in Section \ref{sec:QCSQ} and Section \ref{sec:EEDP}. Theorem \ref{thmthm:EEDP} indicates that the effect of the ellipticity of the coefficients on the error is almost polynomial order. From Lemma \ref{propprop:epmax}, $\Lam(\sq_n)$ and $1/\lam(\sq_n)$ grow almost as much as polynomials, so they do not interfere with the rate $\exp\left( -cn^{1-3\alpha}\right)$ of convergence of $\aa(\sq_n)$, which is obtained in theorem \ref{thm:coa}.
\begin{proof}[Proof of Theorem \ref{thm:king}]
\textit{Step 1.} In this step, we prove the existence of suppressive sequences. We set 
\begin{equation}
\beta':=\frac{1}{\beta}+\frac{1}{2}\left( \alpha -\frac{1}{\beta}-\frac{1}{\gamma}\right), \quad \gamma':=\frac{1}{\gamma}+\frac{1}{2}\left( \alpha -\frac{1}{\beta}-\frac{1}{\gamma}\right),
\end{equation}
and $\del_n:=(n+1)^{-\gamma'},M_n:=(n+1)^{\beta'}$. We show that there exists a constant $L=L(M,\beta,\gamma,\alpha)< \infty$ such that $(\{\del_n\},\{M_n\})$ is suppressive.  By Chebyshev's inequality, it follows that
\begin{equation}
\prb\left( \Lam(\sq_0) \geq M_n\right) = \prb \left( \exp \left( \Lam(\sq_0)^{\beta}\right) \geq \exp \left( (n+1)^{\beta\beta'}\right)\right) \leq \exp \left( -(n+1)^{\beta\beta'}\right) \ep \left[ \exp \left( \Lam(\sq_0)^{\beta}\right) \right].
\end{equation}
By using stationarity and $\beta\beta'>1$, this implies the estimate
\begin{align}
\prb\left( \Lam(\sq_n) \geq M_n \right) &=\prb \left( \bigcup_{z \in  \mathbb{Z}^d\cap \sq_{n}} \{ \Lam(z+\sq_0)\geq M_n\} \right) \\
&\leq 3^{nd} \,\prb \left( \Lam(\sq_0)\geq M_n \right) \leq M 3^{nd} \exp \left( -(n+1)^{\beta\beta'}\right) \leq Ce^{-4n}.
\end{align}
On the other hand, since 
\begin{equation}
\ep\left[ \exp \left(\Lam(\sq_0) \right) \right] \leq \ep\left[ \exp \left(\Lam(\sq_0)^{\beta} \right) :\Lam(\sq_0) \geq 1\right]+\ep\left[ \exp \left(\Lam(\sq_0) \right) :\Lam(\sq_0) \leq 1\right] \leq M+e,
\end{equation}
it follows from Proposition \ref{propprop:epmax} that, for every $q\geq1$, there exists $C=C(q,M)<\infty$ such that
\begin{equation}
\ep \left[ \Lam(\sq_n)^q \right] =\ep \left[ \sup_{z \in \mathbb{Z}^d \cap \sq_n} \Lam(z+\sq_0)^q \right] \leq C \left((q-1)^q+\left( \log\left(3^{nd}(M+e)\right)\right)^q\right) \leq C+Cn^q. \label{pr:epLamn}
\end{equation}
In the same manner we can see that
\begin{equation}
\prb\left( \lam(\sq_n) \leq \del_n \right) \leq Ce^{-4n},
\end{equation}
and
\begin{equation}
\ep \left[ \lam(\sq_n)^{-q} \right]\leq C+Cn^q. \label{pr:eplamn}
\end{equation}
Using Cauchy-Schwarz inequality and the previous estimates, we obtain
\begin{align}
&\ep \left[ \lam\left(\square_n\right)^{-3}+\Lam\left(\sq_n\right)^3:\left\{ \lam\left(\sq_n\right)\leq \del_n\right\} \cup \left\{ \Lam\left(\sq_n\right)\geq M_n\right\} \right] \\
\leq & C \,\ep \left[ \lam\left(\square_n\right)^{-6}+\Lam\left(\sq_n\right)^6 \right]^{\frac{1}{2}} \prb \left( \left\{ \lam\left(\sq_n\right)\leq \del_n\right\} \cup \left\{ \Lam\left(\sq_n\right)\geq M_n\right\} \right)^{\frac{1}{2}} \\
\leq& C(1+n^3)e^{-2n} \leq Ce^{-n},
\end{align}
which gives our claim.

\textit{Step 2.} Next we get the estimate of the expectation of $\Om(n)^2$. By Cauchy-Schwarz inequality,
\begin{align}
\Om(n)^2 &\leq \left( \left( \sum_{m=0}^n 3^{-(n-m)} \right)^{\frac{1}{2}} \left( \sum_{m=0}^n 3^{-(n-m)} 3^{-(n-m)d} \sum_{z \in 3^m\mathbb{Z}^d \cap \sq_n} | \aa(z+\sq_m)-\bar{\aa}|  \right)^{\frac{1}{2}} \right)^4 \\
&\leq C \left( \sum_{m=0}^n 3^{-(n-m)} 3^{-(n-m)d} \sum_{z \in 3^m\mathbb{Z}^d \cap \sq_n} | \aa(z+\sq_m)-\bar{\aa}| \right)^2 \\
&\leq  C \left( \sum_{m=0}^{n} 3^{-(n-m)} \right) \left( \sum_{m=0}^{n} 3^{-(n-m)}3^{-2(n-m)d} \left( \sum_{z \in 3^m\mathbb{Z}^d \cap \sq_n} | \aa(z+\sq_m)-\bar{\aa}| \right)^2\right) \\
&\leq C \sum_{m=0}^{n} 3^{-(n-m)} 3^{-(n-m)d} \sum_{z \in 3^m\mathbb{Z}^d \cap \sq_n} | \aa(z+\sq_m)-\bar{\aa}|^2. \label{holder}
\end{align}
For $x \in \mathbb{R}$, we denote by $\lfloor x \rfloor$ the greatest integer less than or equal to $x$. From the existence of suppressive sequences, taking expectation and using stationarity and Theorem \ref{thm:coa} yield
\begin{align}
\ep \left[ \Om(n)^2 \right] &\leq C \sum_{m=0}^n 3^{-(n-m)} \exp \left( -cm^{1-3\alpha} \right) \\
&= \sum_{m=0}^{\left\lfloor \frac{n}{2} \right\rfloor} 3^{-(n-m)} \exp \left( -cm^{1-3\alpha} \right) + \sum_{m=\left\lfloor \frac{n}{2} \right\rfloor+1}^{n} 3^{-(n-m)} \exp \left( -cm^{1-3\alpha} \right) \\
&\leq C3^{-\frac{n}{2}}+C\exp\left( -c\left\lfloor \frac{n}{2}+1\right\rfloor^{1-3\alpha}\right) \leq C \exp \left(-cn^{1-3\alpha}\right). \label{pr:epom}
\end{align}

\textit{Step 3.} We conclude the proof. Fix $p \in (0,4)$. For $\eps \in (0,1)$, we set $n \in [-\log_3 \eps ,-\log_3 \eps +1)\cap \mathbb{N}$ and we write
\begin{equation}
\Phi(\eps):=\left( \frac{\Lam(\sq_n)^3+\Lam(\sq_n)}{\lam(\sq_n)}\right)^{\frac{1}{2}}\eps +\left( \left( \frac{\Lam(\sq_n)^2+1}{\lam(\sq_n)}\right)^{\frac{1}{2}}+1\right) \Om(n)^{\frac{1}{2}}.
\end{equation}
Using H\"{o}lder's inequality, \eqref{pr:epLamn}, \eqref{pr:eplamn}, \eqref{pr:epom} and $-\log_3\eps\leq n$ leads to
\begin{align}
&\ep\left[ \Phi(\eps)^{\frac{p+4}{2}}\right] \\
\leq&\, C\,\ep \left[ \left( \frac{\Lam(\sq_n)^3+\Lam(\sq_n)}{\lam(\sq_n)}\right)^{\frac{p+4}{4}}\right]\eps^{\frac{p+4}{2}}+C\,\ep\left[ \left( \left(\frac{\Lam(\sq_n)^2+1}{\lam(\sq_n)}\right)^{\frac{1}{2}}+1\right)^{\frac{4(p+4)}{4-p}}\right]^{\frac{4-p}{8}} \ep\left[ \Om(n)^2\right]^{\frac{p+4}{8}} \\
\leq&\, C\left(n^{p+4}+1\right) \eps^{\frac{p+4}{2}}+C\left(n^{\frac{3(p+4)}{4}}+1\right) \exp \left(-cn^{1-3\alpha}\right) \\
\leq&\, C \exp \left(-c\left( -\log \eps \right)^{1-3\alpha} \right). \label{pr:epPhi}
\end{align}
In using Theorem \ref{thmthm:EEDP}, two things should be mentioned. First, since there exist suppressive sequences, Proposition \ref{propprop:baa} and \eqref{supep0} give the existence of the constants $a=a(M,\beta,\gamma,\alpha)>0$, $A=A(M,\beta,\gamma,\alpha)<\infty$\ satisfying $a\textup{\textsf{Id}} \leq \bar{\aa} \leq A\textup{\textsf{Id}}$. Second, for $b,c>0$ and $D>0$, the following equality holds:
\begin{equation}
\inf_{r \in (0,1)} \left(r^b+\frac{1}{r^c}D \right)=
\begin{dcases}
\left( \bigg( \frac{c}{b} \bigg)^{\frac{b}{b+c}} +\bigg( \frac{b}{c} \bigg)^{\frac{c}{b+c}} \right) D^{\frac{b}{b+c}} & \text{if }D\leq \frac{b}{c} \text{,}\\
1+D & \text{if }D\geq \frac{b}{c} \text{.}
\end{dcases}
\end{equation}
Let $c:=2+d/2$. Applying Theorem \ref{thmthm:EEDP} and using H\"{o}lder's inequality, \eqref{pr:epLamn}, \eqref{pr:eplamn} and the above inequality, we obtain
\begin{align}
&\ep \left[ \left\|u-u^{\eps} \right\|_{L^2(U)}^p \right] \\
\leq& \,C \left\| \nab f \right\|_{L^{2+\del}(U)}^p \ep \left[ \left(\frac{\Lam(\sq_n)+1}{\lam(\sq_n)}\right)^\frac{p(3p+4)}{4-p} \right]^{\frac{4-p}{3p+4}} \ep \left[ \inf_{r \in (0,1)} \left( r^b+\frac{1}{r^c}  \Phi(\eps) \right)^{\frac{3p+4}{4}} \right]^{\frac{4p}{3p+4}} \\
\leq& \,C\left\| \nab f \right\|_{L^{2+\del}(U)}^p\left( n^{2p}+1\right) \left( \ep \left[ \Phi(\eps)^{\frac{3p+4}{4}\frac{b}{b+c}} \right] +\ep \left[ \left( 1+\Phi(\eps) \right)^{\frac{3p+4}{4}}:\Phi(\eps) \geq \frac{b}{c} \right]\right)^{\frac{4p}{3p+4}}.
\end{align}
Using H\"{o}lder's inequality, Chebyshev's inequality and \eqref{pr:epPhi}, we obtain
\begin{equation}
\ep \left[ \Phi(\eps)^{\frac{3p+4}{4}\frac{b}{b+c}} \right] \leq \ep \left[ \Phi(\eps)^{\frac{p+4}{2}} \right]^{\frac{3p+4}{2p+8}\frac{b}{b+c}} \leq C\exp \left(-c\left( -\log \eps \right)^{1-3\alpha} \right)
\end{equation}
and
\begin{align}
\ep \left[ \left( 1+\Phi(\eps) \right)^{\frac{3p+4}{4}}:\Phi(\eps) \geq \frac{b}{c} \right] &\leq \prb \left( \Phi(\eps) \geq \frac{b}{c} \right)^{\frac{4-p}{2p+8}} \ep \left[ \left( 1+\Phi(\eps) \right)^{\frac{p+4}{2}}\right]^{\frac{3p+4}{2p+8}} \\
&\leq C\,\ep\left[ \Phi(\eps)^{\frac{p+4}{2}}\right]^{\frac{4-p}{2p+8}}\left( 1+\ep\left[ \Phi(\eps)^{\frac{p+4}{2}}\right]\right)^{\frac{3p+4}{2p+8}} \\
&\leq C\exp \left(-c\left( -\log \eps \right)^{1-3\alpha} \right).
\end{align}
Combining the previous three displays gives \eqref{thm:king2}, and the proof is complete.
\end{proof}


\section{Appendix}
In this section, we provide some estimates needed for our proof.

The following lemma gives us an upper estimate for the $L^p$-moment of the maximum of random variables.

\begin{prop} \label{propprop:epmax} 
\textup{(cf.\cite[Proposition 5.2.]{MR3772806})} Let $X_1,X_2,\dots,X_n$ are random variables such that $\ep [ e^{|X_i|} ] < \infty$ for $i \in \{1,2,\dots,n\}$. Then, there exists a constant $C=C\left(p\right)<\infty$ such that 
\begin{equation}
\ep\left[ \max_{i=1,2,\dots,n} \left|X_i\right|^p \right] \leq C\left\{ (p-1)^p+\left[ \log \left( \sum_{i=1}^{n} \ep \left[ e^{|X_i|} \right] \right) \right]^p \right\}. \label{rvmax}
\end{equation}
for every $n \in \mathbb{N}$ and every $p \in [1,\infty)$.
\end{prop}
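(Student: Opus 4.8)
The plan is to reduce to the classical distribution-function (layer-cake) identity and to use the exponential integrability only through a single Markov/union-bound tail estimate. Write $Y:=\max_{1\le i\le n}|X_i|\ge 0$ and $S:=\sum_{i=1}^{n}\ep[e^{|X_i|}]$; by Jensen's inequality $\ep[e^{|X_i|}]\ge e^{\ep[|X_i|]}\ge 1$, so $S\ge n\ge 1$ and $\log S\ge 0$, which is exactly what keeps the bound non-vacuous. The first step is the tail estimate $\prb(Y>t)\le\min\{1,\,S e^{-t}\}$ for $t\ge 0$: the union bound gives $\prb(Y>t)\le\sum_i\prb(|X_i|>t)=\sum_i\prb(e^{|X_i|}>e^{t})$, and Markov's inequality applied to $e^{|X_i|}$ produces the factor $e^{-t}S$, while the bound by $1$ is trivial.

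Next I would insert this into $\ep[Y^p]=\int_0^\infty p\,t^{p-1}\prb(Y>t)\,dt$ and split the integral at the threshold $t_0:=\log S$. On $[0,\log S]$ I use $\prb(Y>t)\le 1$, which contributes exactly $(\log S)^p$. On $[\log S,\infty)$ I use $\prb(Y>t)\le S e^{-t}$ and substitute $t=\log S+s$; the prefactor $S e^{-\log S}$ collapses to $1$, leaving $\int_0^\infty p(\log S+s)^{p-1}e^{-s}\,ds$. Expanding $(\log S+s)^{p-1}\le 2^{p-1}\big((\log S)^{p-1}+s^{p-1}\big)$ and then applying Young's inequality in the form $p(\log S)^{p-1}\le(\log S)^p+p^p$ bounds this integral by a constant depending only on $p$ times $(\log S)^p+p^p+\Gamma(p+1)$. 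Collecting the two pieces gives $\ep[Y^p]\le C_p\big((\log S)^p+p^p+\Gamma(p+1)\big)$; since for each fixed $p>1$ the quantity $p^p+\Gamma(p+1)$ is a finite positive multiple of $(p-1)^p$ (and in fact $(p-1)^p$ dominates $p!$ for large $p$), this is the claimed estimate with a suitable $C=C(p)$.

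The only genuinely delicate point is the endpoint $p=1$, where $(p-1)^p=0$ and the additive constant produced by the layer-cake computation cannot be absorbed into $C(\log S)^p$ once $S$ is close to $1$. For that case I would argue directly, bypassing the tail split: by Jensen's inequality and monotonicity of the exponential, $e^{\ep[Y]}\le\ep[e^{Y}]=\ep\big[\max_i e^{|X_i|}\big]\le\sum_i\ep[e^{|X_i|}]=S$, hence $\ep[Y]\le\log S$, which is precisely the assertion for $p=1$ with $C=1$. Thus the main obstacle is not a single hard estimate but the structural point of choosing the truncation level $\log S$ so that the logarithm of $S$ rather than $S$ itself appears, together with handling the degenerate exponent $p=1$ separately.
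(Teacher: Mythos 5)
Your argument is correct, and it takes a genuinely different route from the paper. The paper works directly with $Y:=\max_i|X_i|^p\vee x_0$, $x_0=(p-1)^p$, and applies Jensen's inequality to the increasing function $f(x)=\exp\bigl(x^{1/p}\bigr)$, which is convex on $[x_0,\infty)$; after inverting $f$ this gives the bound $\bigl(p-1+\log\sum_i\ep[e^{|X_i|}]\bigr)^p\le 2^{p-1}\bigl\{(p-1)^p+\bigl[\log\sum_i\ep[e^{|X_i|}]\bigr]^p\bigr\}$ in one stroke, with the explicit constant $2^{p-1}$ and no special treatment of $p=1$ (there $x_0=0$ and $f=\exp$ is convex on all of $[0,\infty)$). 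Your route is the layer-cake identity combined with the union/Markov tail bound $\prb(\max_i|X_i|>t)\le\min\{1,Se^{-t}\}$ and truncation at $t_0=\log S$; all steps check out, including the use of $S\ge 1$ so that $\log S\ge 0$, the substitution collapsing $Se^{-\log S}$ to $1$, and the absorption $p(\log S)^{p-1}\le(\log S)^p+p^p$ via Young. The price is the additive remainder $p^p+\Gamma(p+1)$, which for $p>1$ you absorb into $C(p)(p-1)^p$ — legitimate under the statement's $C=C(p)$, though your constant degenerates as $p\to1^{+}$, unlike the paper's — and the endpoint $p=1$, which you correctly handle separately by $e^{\ep[Y]}\le\ep[e^{Y}]\le S$. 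Since the paper only invokes this proposition with constants allowed to depend on the exponent (as in \eqref{pr:epLamn}), your version is fully adequate for its application; the paper's convexity argument is slightly slicker and quantitatively sharper near $p=1$, while yours is more elementary and makes the Chernoff-type mechanism behind the $\log$ explicit.
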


\begin{proof}
Define a function $f$ on $[0,\infty)$ by
\begin{equation}
f(x):=\exp \left(x^{\frac{1}{p}} \right)
\end{equation}
and $x_0:=(p-1)^p$. It is easily seen that $f$ is strictly increasing on $[0,\infty)$ and convex on $[x_0,\infty)$. Set a ramdom variable $Y$ by
\begin{equation}
Y:=\max_{i=1,2,\dots,n} \left|X_i \right|^p \; \vee x_0.
\end{equation}
By applying Jensen's inequality to $Y$, we have
\begin{align}
&\ep\left[\max_{i=1,2,\cdots,n} \left| X_i \right|^p \right] \\
\leq& \ep \left[ Y \right] \\
\leq& f^{-1} \left( \ep \left[ f \left(Y \right) \right] \right) \\
=& f^{-1} \left( \ep \left[ \exp\left(\max_{i=1,2,\dots,n} |X_i|\right):Y \geq x_0 \right] +\exp\left(x_0^{1/p} \right) \prb \left( Y\leq x_0 \right) \right)  \\
\leq& f^{-1} \left( \ep \left[ \exp\left(\max_{i=1,2,\dots,n} |X_i|\right):Y \geq x_0 \right] + e^{p-1} \ep \left[ \exp\left(\max_{i=1,2,\dots,n} |X_i|\right):Y \leq x_0 \right] \right) \\
\leq& f^{-1} \left( e^{p-1} \ep \left[ \max_{i=1,2,\dots,n} e^{|X_i|} \right] \right) \\
\leq& \left(p-1 + \log\left( \sum_{i=1}^{n} \ep \left[ e^{|X_i|} \right] \right) \right)^p \\
\leq&2^{p-1} \left\{ \left( p-1 \right)^p + \left[ \log \left( \sum_{i=1}^{n} \ep \left[ e^{|X_i|} \right] \right) \right]^p \right\}.
\end{align}
Thus, \eqref{rvmax} is proved.
\end{proof}

We give the statement of the interior Caccioppoli inequality. For each $r \in \mathbb{R}_+$, we define $r\square:=( -r,r )^d$.
\begin{lem}[Interior Caccioppoli inequality] \label{lem:cacci}
Let $r>0$, $0<\lam\leq\Lam$ and let $\aa$ be a measurable map from $3r\square$ to the set of positive symmetric matrices with eigenvalues belonging to $[\lam,\Lam ]$. Suppose that $u \in H^1(3r\square)$ satisfies
\begin{equation}
-\nab \cdot \left( \aa(x) \nab u \right)=0 \quad \text{in} \; \; 3r\square.  \label{asm:ci}
\end{equation}
Then, there exists a constant $C=C(d)<\infty$ such that
\begin{equation}
\| \nab u\|_{\ll^2(r\square)}\leq C\,\frac{\Lam}{\lam}\,\frac{1}{r}\|u-(u)_{3r\square}\|_{\ll^2(3r\square)}. \label{lem:ci}
\end{equation}
\end{lem}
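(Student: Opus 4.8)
The plan is to run the classical energy (Caccioppoli) argument: test the equation with a localized multiple of $u$ minus its average, bound the genuine energy from below by ellipticity, and absorb the resulting cross term by Young's inequality.

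First I would fix a cutoff $\zeta\in C_c^\infty(3r\square)$ with $0\le\zeta\le1$, $\zeta\equiv1$ on $r\square$, $\zeta\equiv0$ on $3r\square\setminus 2r\square$, and $|\nab\zeta|\le C(d)/r$; such a function exists because the $l^\infty$-distance between $\partial(r\square)$ and $\partial(2r\square)$ equals $r$. Write $c:=(u)_{3r\square}$. Since $u\in H^1(3r\square)$ and $\zeta^2\in C_c^\infty(3r\square)$, the function $\phi:=\zeta^2(u-c)$ lies in $H_0^1(3r\square)$, so it is an admissible test function in the weak formulation \eqref{asm:ci}, giving $\int_{3r\square}\nab\phi\cdot\aa\nab u=0$. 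Expanding $\nab\phi=\zeta^2\nab u+2\zeta(u-c)\nab\zeta$ and rearranging yields
\[
\int_{3r\square}\zeta^2\,\nab u\cdot\aa\nab u=-2\int_{3r\square}\zeta(u-c)\,\nab\zeta\cdot\aa\nab u.
\]

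Next I would estimate the two sides. By the eigenvalue bound $\lam\textup{\textsf{Id}}\le\aa\le\Lam\textup{\textsf{Id}}$, the left-hand side is at least $\lam\int_{3r\square}\zeta^2|\nab u|^2$. For the right-hand side I would use symmetry of $\aa$ and the weighted Young inequality $2|\zeta(u-c)\,\nab\zeta\cdot\aa\nab u|\le\tfrac12\zeta^2\,\nab u\cdot\aa\nab u+2(u-c)^2\,\nab\zeta\cdot\aa\nab\zeta$, together with $\nab\zeta\cdot\aa\nab\zeta\le\Lam|\nab\zeta|^2$; absorbing the first term into the left-hand side gives
\[
\frac{\lam}{2}\int_{3r\square}\zeta^2|\nab u|^2\le\frac12\int_{3r\square}\zeta^2\,\nab u\cdot\aa\nab u\le2\Lam\int_{3r\square}(u-c)^2|\nab\zeta|^2.
\]
Restricting the left integral to $r\square$ (where $\zeta=1$) and using $|\nab\zeta|\le C/r$ with $\mathrm{supp}\,\nab\zeta\subseteq 3r\square$ yields $\int_{r\square}|\nab u|^2\le\frac{C\Lam}{\lam\,r^2}\int_{3r\square}(u-c)^2$. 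Finally I would pass to normalized norms, dividing through by $|r\square|$ and using $|3r\square|=3^d|r\square|$, which gives $\|\nab u\|_{\ll^2(r\square)}^2\le\frac{C(d)\Lam}{\lam\,r^2}\|u-(u)_{3r\square}\|_{\ll^2(3r\square)}^2$; taking square roots produces \eqref{lem:ci} (in fact with $(\Lam/\lam)^{1/2}$, which is stronger than the stated $\Lam/\lam$).

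There is no serious obstacle here — this is a textbook computation. The only points requiring a little care are the admissibility of $\phi=\zeta^2(u-c)$ as a test function, which needs the hypothesis $u\in H^1(3r\square)$ together with $\zeta\in C_c^\infty$, and keeping the dependence on the ellipticity contrast down to a single power of $\Lam/\lam$: a plain Cauchy--Schwarz splitting of the cross term would cost $(\Lam/\lam)^2$, whereas the $\aa$-weighted Young inequality keeps it sharp.
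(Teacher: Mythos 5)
Your proposal is correct and follows essentially the same route as the paper's proof: test the weak formulation with a cutoff squared times $u$ minus its average, use the eigenvalue bounds $\lam\textup{\textsf{Id}}\le\aa\le\Lam\textup{\textsf{Id}}$, and absorb the cross term by Young's inequality. The only difference is that you use the $\aa$-weighted Young inequality where the paper uses a plain weighted Young estimate, so you obtain the slightly sharper factor $(\Lam/\lam)^{1/2}$ while the paper's argument yields exactly the stated $\Lam/\lam$; either way the lemma follows.
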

\begin{proof}
By replaceing $u-(u)_{3r\square}$ with $u$, we may suppose $(u)_{3r\square}=0$. Let $\phi \in C_c^{\infty}(3r\square)$ be a cutoff function satisfying
\begin{equation}
0\leq \phi \leq 1, \quad \phi=1\quad \text{in}\;\;r\square,  \quad \left| \nab \phi \right| \leq \frac{2}{r}, \label{def:phi}
\end{equation}
and test the equation \eqref{asm:ci} with $\phi^2u\in H_0^1(3r\square)$ to get
\begin{equation}
\fint_{3r\square}\phi^2\nab u \cdot \aa \nab u=-\fint_{3r\square} 2 \phi u \nab \phi \cdot \aa \nab u .
\end{equation}
From Young's inequality, the upper bound $\aa \leq \Lam\textup{\textsf{Id}}$ and \eqref{def:phi}, we have
\begin{align}
\left| 2\phi u \nab \phi \cdot \aa \nab u \right| &\leq \frac{2\Lam^2}{\lam}\left| u\nab \phi \right|^2+\frac{\lam}{2\Lam^2} \left| \phi \cdot \aa \nab u \right|^2 \\
&\leq \frac{8\Lam^2}{\lam} \, \frac{1}{r^2} \left| u \right|^2+\frac{\lam}{2}\left|\phi\right|^2\left|\nab u \right|^2.
\end{align}
From the lower bound $\lam\textup{\textsf{Id}}\leq \aa$, we have
\begin{equation}
\fint_{3r\square} \phi^2 \nab u \cdot \aa \nab u \geq \lam \fint_{3r\square} \phi^2 \left| \nab u \right|^2.
\end{equation}
By the three previous displays, it follows that 
\begin{equation}
\fint_{3r\square} \phi^2 \left| \nab u \right|^2 \leq C\,\frac{\Lam^2}{\lam^2}\,\frac{1}{r^2}\fint_{3r\square} \left| u\right|^2.
\end{equation}
Since $\| \phi \nab u \|_{\ll^2(3r\square)} \geq C(d) \|\nab u \|_{\ll^2(r\square)}$ by \eqref{def:phi}, the lemma follows. 
\end{proof}

The following lemma is an adapted version of Poincar\'{e} inequality in the domain $\sq_n$. It includes spatial averages on all triadic subcubes of $\sq_n$.
\begin{lem}[Multiscale Poincar\'{e} inequality]\label{lemlem:mpi}
There exists a constant $C(d)<\infty$ such that, for every $n,m\in\mathbb{N}$ with $m \leq n$ and every $u \in L^2(\sq_n)$,
\begin{equation}
\| u \|_{\hh^{-1}(\sq_n)} \leq C\,\| u \|_{\ll^{2}(\sq_n)} +C \sum_{m=0}^{n-1}3^m \left( 3^{-(n-m)d} \sum_{ y \in 3^m\mathbb{Z}^d\cap \sq_{n}} \left| \fint_{y+\sq_m}  u  \right|^2 \right)^{\frac{1}{2}}.
\end{equation}
Also, for every $v \in H^1(\sq_n)$,
\begin{align}
\|v-(v)_{\sq_n}\|_{\ll^2(\sq_n)} &\leq C\,\|\nab v \|_{\hh^{-1}(\sq_n)} \\
&\leq C\,\|\nab v \|_{\ll^{2}(\sq_n)} +C \sum_{m=0}^{n-1}3^m \left( 3^{-(n-m)d} \sum_{ y \in 3^m\mathbb{Z}^d\cap \sq_{n}} \left| \fint_{y+\sq_m} \nab v  \right|^2 \right)^{\frac{1}{2}}
\end{align}
and, for every $w \in H_0^1(\sq_n)$,
\begin{align}
\|w\|_{\ll^2(\sq_n)}  &\leq C\,\|\nab w \|_{\hh^{-1}(\sq_n)} \\
&\leq C\,\|\nab w \|_{\ll^{2}(\sq_n)}+C\sum_{m=0}^{n-1}3^m \left( 3^{-(n-m)d} \sum_{ y \in 3^m\mathbb{Z}^d\cap \sq_{n}} \left| \fint_{y+\sq_m}\nab w \right|^2 \right)^{\frac{1}{2}}.
\end{align}
\end{lem}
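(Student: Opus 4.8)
The three displayed inequalities share a common skeleton; the only one requiring real work is the first, and I would deduce the other two from it.

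\textbf{The first inequality.} Fix $u\in L^2(\sq_n)$ and a test function $v\in H^1(\sq_n)$ with $\|v\|_{\underline{H}^1(\sq_n)}\le 1$; the goal is to bound $\fint_{\sq_n}uv$ by the claimed right-hand side. For $0\le m\le n$, let $\Pi_m\colon L^2(\sq_n)\to L^2(\sq_n)$ be orthogonal projection onto the functions that are constant on each triadic subcube $y+\sq_m$ of $\sq_n$, so that $\Pi_m g$ equals $\fint_{y+\sq_m}g$ on $y+\sq_m$, $\Pi_n g=(g)_{\sq_n}$, and each $\Pi_m$ is self-adjoint. The plan is to telescope
\begin{equation}
\fint_{\sq_n}uv=\fint_{\sq_n}u\,(v-\Pi_0v)+(v)_{\sq_n}\fint_{\sq_n}u+\sum_{m=0}^{n-1}\fint_{\sq_n}(\Pi_mu)\,(\Pi_mv-\Pi_{m+1}v),
\end{equation}
using self-adjointness of $\Pi_m$ in the last sum (the factor $\Pi_mv-\Pi_{m+1}v$ being $\Pi_m$-measurable). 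The first term is controlled by Cauchy--Schwarz and the unit-cube Poincar\'e inequality, $|\fint_{\sq_n}u\,(v-\Pi_0v)|\le\|u\|_{\ll^2(\sq_n)}\|v-\Pi_0v\|_{\ll^2(\sq_n)}\le C\|u\|_{\ll^2(\sq_n)}\|\nab v\|_{\ll^2(\sq_n)}$. The second term is at most $3^n|\fint_{\sq_n}u|$, since $|(v)_{\sq_n}|\le\|v\|_{\ll^2(\sq_n)}\le 3^n\|v\|_{\underline{H}^1(\sq_n)}\le 3^n$; splitting $\sq_n$ into its $3^d$ subcubes of side $3^{n-1}$ and using Cauchy--Schwarz bounds this by a fixed multiple of the $m=n-1$ summand (for $n=0$ one absorbs it into the first term). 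For the generic summand, Cauchy--Schwarz gives $|\fint_{\sq_n}(\Pi_mu)(\Pi_mv-\Pi_{m+1}v)|\le\|\Pi_mu\|_{\ll^2(\sq_n)}\|\Pi_mv-\Pi_{m+1}v\|_{\ll^2(\sq_n)}$, where $\|\Pi_mu\|_{\ll^2(\sq_n)}=\big(3^{-(n-m)d}\sum_{y\in3^m\mathbb{Z}^d\cap\sq_n}|\fint_{y+\sq_m}u|^2\big)^{1/2}$ is precisely the quantity in the statement, while the Poincar\'e inequality at scales $3^m$ and $3^{m+1}$ on each cube $z+\sq_{m+1}$ yields $\|\Pi_mv-\Pi_{m+1}v\|_{\ll^2(\sq_n)}\le C3^m\|\nab v\|_{\ll^2(\sq_n)}\le C3^m$. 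Summing over $m$ produces the first inequality.

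\textbf{The remaining inequalities.} Applying the first inequality componentwise to $\nab v$ (respectively $\nab w$) and summing over coordinates, using $\|\partial_iv\|_{\ll^2}\le\|\nab v\|_{\ll^2}$ and $|\fint_{y+\sq_m}\partial_iv|\le|\fint_{y+\sq_m}\nab v|$, gives the right-hand estimates $\|\nab v\|_{\hh^{-1}(\sq_n)}\le C\|\nab v\|_{\ll^2(\sq_n)}+C\sum_m\cdots$ and similarly for $w$. The left-hand estimates $\|v-(v)_{\sq_n}\|_{\ll^2(\sq_n)}\le C\|\nab v\|_{\hh^{-1}(\sq_n)}$ and $\|w\|_{\ll^2(\sq_n)}\le C\|\nab w\|_{\hh^{-1}(\sq_n)}$ are invariant under the rescaling $x\mapsto 3^nx$ (by the scaling identities for the normalized $\ll^2$, $\hh^{-1}$ and $\underline{H}^1$ norms), so they reduce to the fixed cube $\sq_0$, where they are the standard duality form of the Poincar\'e--Wirtinger and Poincar\'e inequalities; for $w\in H_0^1(\sq_n)$ one extends by $0$ and uses the zero-boundary-value version.

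\textbf{Expected obstacle.} The only genuine computation is the scale-by-scale bound $\|\Pi_mv-\Pi_{m+1}v\|_{\ll^2(\sq_n)}\le C3^m\|\nab v\|_{\ll^2(\sq_n)}$, together with the bookkeeping of cube counts and powers of $3$, so that the telescoped series matches exactly the weights $3^m$ and the normalized averages $3^{-(n-m)d}\sum_y|\fint_{y+\sq_m}\cdot|^2$ in the statement. Once the projections $\Pi_m$ are in place this is a routine normalization check, and I do not expect any deeper difficulty.
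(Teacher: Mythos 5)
Your proposal is correct and is essentially the argument behind the paper's own (cited) proof: the paper does not prove this lemma itself but refers to \cite[Proposition 1.12 and Lemma 1.13]{MR3932093}, and your telescoping over triadic scales with the piecewise-constant projections $\Pi_m$, the scale-by-scale bound $\left\| \Pi_m v-\Pi_{m+1}v\right\|_{\ll^2(\sq_n)}\leq C3^m\left\| \nab v\right\|_{\ll^2(\sq_n)}$, the identification $\left\|\Pi_m u\right\|_{\ll^2(\sq_n)}^2=3^{-(n-m)d}\sum_{y}\left|\fint_{y+\sq_m}u\right|^2$, and the reduction of the left-hand bounds to the unit cube via the stated scaling identities is exactly the standard route taken there. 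The only step you leave implicit is the base case $\left\|v-(v)_{\sq_0}\right\|_{\ll^2(\sq_0)}\leq C\left\|\nab v\right\|_{\hh^{-1}(\sq_0)}$ on the unit cube (a Ne\v{c}as-type duality estimate, e.g.\ via the Neumann problem and $H^2$ regularity on a convex domain), which is standard and does not affect the validity of your argument.
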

\begin{proof}
See \cite[Proposition 1.12. and Lemma 1.13.] {MR3932093}.
\end{proof}

Finally, we introduce the Meyers estimate. This lemma tells us that a solution of a uniformly elliptic equation has good regularity. We use the Meyers estimate in Lemma \ref{sec:EEDP} to obtain the estimates of $L^2$-norm of $\nab u$ in the boundary layer.
\begin{lem}[Global Meyers estimate]\label{lemlem:meyers}
Fix $p \in (2,\infty)$ and $0<\lam\leq\Lam<\infty$. Let $U\subseteq\mathbb{R}^d$ be a bounded Lipschitz domain and let $\aa$ be a measurable map from $U$ to the set of positive symmetric matrices with eigenvalues belonging to $[\lam,\Lam ]$. Suppose that $f \in W^{1,p}(U)$ and $u \in f +H_0^1(U)$ is the solution of 
\begin{equation}
-\nab \cdot \left(\aa(x)  \nab u\right) =0 \quad(\text{in } U), \qquad u=f \quad( \text{on } \partial U)
\end{equation}
in the distribution sense. Then, there exist constants $\del=\del(U,d,\lam,\Lam)>0$ and $C=C(U,d,\lam,\Lam)< \infty$ such that $u \in W^{1,(2+\del)\wedge p}(U)$ and
\begin{equation}
\| \nab u \|_{\ll^{(2+\del)\wedge p}(U)} \leq C\,\| \nab f \|_{\ll^{(2+\del)\wedge p}(U)}
\end{equation}
\end{lem}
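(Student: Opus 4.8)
The plan is to reduce the problem to an equation with a divergence-form right-hand side and then establish a reverse H\"older inequality for the gradient, which self-improves via Gehring's lemma. First I would set $w:=u-f\in H_0^1(U)$, so that $w$ solves, in the distribution sense,
\[
-\nab\cdot(\aa\nab w)=\nab\cdot(\aa\nab f)\quad\text{in }U,\qquad w=0\text{ on }\partial U,
\]
i.e.\ $w$ is the Lax--Milgram solution with source term $F:=\aa\nab f\in L^p(U)$, $\|F\|_{\ll^p(U)}\le\Lam\|\nab f\|_{\ll^p(U)}$. It then suffices to prove $\nab w\in L^{(2+\del)\wedge p}(U)$ with the corresponding bound, since $\nab u=\nab w+\nab f$ inherits it.

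Next I would derive the interior reverse H\"older inequality. Fix a ball $B_{2\rho}(x_0)\subset U$, let $\phi$ be a standard cutoff supported in $B_{2\rho}(x_0)$ with $\phi\equiv1$ on $B_\rho(x_0)$ and $|\nab\phi|\le C/\rho$, and test the equation with $\phi^2\bigl(w-(w)_{B_{2\rho}(x_0)}\bigr)\in H_0^1(U)$. Using the ellipticity bounds $\lam\mathsf{Id}\le\aa\le\Lam\mathsf{Id}$ and Young's inequality (the Caccioppoli inequality with right-hand side) gives
\[
\fint_{B_\rho(x_0)}|\nab w|^2\le\frac{C}{\rho^2}\fint_{B_{2\rho}(x_0)}\bigl|w-(w)_{B_{2\rho}(x_0)}\bigr|^2+C\fint_{B_{2\rho}(x_0)}|F|^2 .
\]
Combining this with the Sobolev--Poincar\'e inequality $\|w-(w)_{B_{2\rho}(x_0)}\|_{L^2(B_{2\rho}(x_0))}\le C\rho\,\|\nab w\|_{L^{2_\ast}(B_{2\rho}(x_0))}$, where $2_\ast:=\tfrac{2d}{d+2}<2$ (and any exponent in $(1,2)$ when $d\le2$), yields
\[
\Bigl(\fint_{B_\rho(x_0)}|\nab w|^2\Bigr)^{1/2}\le C\Bigl(\fint_{B_{2\rho}(x_0)}|\nab w|^{2_\ast}\Bigr)^{1/2_\ast}+C\Bigl(\fint_{B_{2\rho}(x_0)}|F|^2\Bigr)^{1/2}.
\]

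For the boundary, I would extend $w$ by $0$ to $\mathbb{R}^d\setminus U$ (legitimate since $w\in H_0^1(U)$) and extend $F$ by $0$. For a ball $B_{2\rho}(x_0)$ with $x_0\in\partial U$ and $\rho$ below the Lipschitz scale of $U$, the set $\{w=0\}\cap B_{2\rho}(x_0)\supseteq B_{2\rho}(x_0)\setminus U$ has measure at least $c_U|B_{2\rho}(x_0)|$ by the measure-density (corkscrew) property of Lipschitz domains, so the Poincar\'e/Sobolev inequality with vanishing on a set of proportional measure applies. Testing the equation with $\phi^2 w$ (whose restriction to $U$ lies in $H_0^1(U)$) and repeating the above absorption argument gives the same reverse H\"older estimate on $B_\rho(x_0)$, with $C$ now depending also on the Lipschitz character of $U$. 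A finite covering of $\overline U$ then produces a reverse H\"older inequality valid on all balls of radius $\le\rho_0(U)$ centred in $\overline U$. I expect this boundary step to be the main obstacle: one must handle balls straddling $\partial U$ uniformly, and it is precisely here that the Lipschitz hypothesis is used, through the measure-density estimate that guarantees the boundary Poincar\'e inequality.

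Finally I would invoke the self-improving property of reverse H\"older inequalities with right-hand side (the Gehring / Giaquinta--Modica lemma): from the estimates above there exists $\del=\del(U,d,\lam,\Lam)>0$ with $\nab w\in L^{(2+\del)\wedge p}(U)$ and
\[
\|\nab w\|_{\ll^{(2+\del)\wedge p}(U)}\le C\bigl(\|\nab w\|_{\ll^2(U)}+\|F\|_{\ll^{(2+\del)\wedge p}(U)}\bigr).
\]
The basic energy estimate $\|\nab w\|_{\ll^2(U)}\le C\|F\|_{\ll^2(U)}$ together with H\"older's inequality on the bounded set $U$ bounds the first term by $C\|\nab f\|_{\ll^{(2+\del)\wedge p}(U)}$, and recalling $\nab u=\nab w+\nab f$ and $\|F\|\le\Lam\|\nab f\|$ gives the claimed estimate. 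Shrinking $\del$ if necessary so that $\del<p-2$ is automatic, and all constants depend only on the stated quantities.
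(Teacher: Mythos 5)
Your argument is correct: it is the standard proof of the global Meyers estimate via a Caccioppoli inequality with right-hand side, Sobolev--Poincar\'e (with the measure-density property of the Lipschitz boundary for balls meeting $\partial U$), and the Gehring/Giaquinta--Modica self-improvement, which is essentially the argument behind the reference the paper cites for this lemma (\cite[Theorem C.7]{MR3932093}); the paper itself gives no proof beyond that citation. The only point to tidy up is the covering step, where interior balls close to $\partial U$ must be compared with nearby boundary-centred balls (enlarging the dilation factor), a routine adjustment that does not affect the conclusion.
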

\begin{proof}
See \cite[Theorem C.7]{MR3932093}.
\end{proof}

\section*{Acknowledgements} 
The author is very grateful to his superviser, Professor Seiichiro Kusuoka for his helping advice and encouragement.

\bibliographystyle{plain}
\bibliography{myref}

\end{document}